\documentclass[a4paper,11pt,reqno]{amsart}

\usepackage[T1]{fontenc}
\usepackage{lmodern}
\usepackage{amsmath,amssymb,amsthm}

\usepackage[top=1.5in,bottom=1.3in,left=1.3in,right=1.3in,marginparwidth=1.5cm]{geometry}
\usepackage{amsrefs}
\usepackage[colorlinks=true, citecolor=blue, urlcolor=blue, linkcolor=blue
]{hyperref}

\usepackage{array}
\usepackage{booktabs}

\allowdisplaybreaks
\numberwithin{equation}{section}

\newtheorem{theorem}{Theorem}[section]
\newtheorem{proposition}[theorem]{Proposition}
\newtheorem{lemma}[theorem]{Lemma}
\newtheorem{corollary}[theorem]{Corollary}
\theoremstyle{remark}

\newcommand{\fF}{F_4}
\newcommand{\eE}{E_6}

\newcommand{\g}{\mathfrak g}
 
\newcommand{\h}{\mathfrak h}
\newcommand{\n}{\mathfrak n}
\newcommand{\vac}{\mathbf 1}
\newcommand{\OO}{\mathbb O}
\newcommand{\C}{\mathbb C}
\newcommand{\D}{\mathcal D}
\newcommand{\cH}{\mathcal H}

\newcommand{\HP}{\operatorname{HP}_0}
\newcommand{\ad}{\operatorname{ad}}
\newcommand{\Spec}{\operatorname{Spec}}

\newcommand{\HC}{\operatorname{HC}}
\newcommand{\gr}{\operatorname{gr}}
\newcommand{\tr}{\operatorname{tr}}

\title[Quasi-lisse affine vertex algebras $L_{-1-h/6}(\g)$]{On quasi-lisse affine vertex algebras $L_{-1-h/6}(\g)$ from rank-one Argyres--Douglas theories}

\author{Dra\v{z}en Adamovi\'{c}}
\email{adamovic@math.hr}
\address{Department of Mathematics, Faculty of Science, University of Zagreb, Bijeni\v{c}ka 30, 10 000 Zagreb, Croatia}

\author{Xuanzhong Dai} 
\email{xzdai@nju.edu.cn}
\address{School of Mathematics, Nanjing University,
Nanjing 210093, China}

\author{Kaiwen Sun}
\email{kwsun@ustc.edu.cn}

\address{School of Mathematical Sciences, University of Science and Technology of China, Hefei 230026, Anhui, China}

\date{}

\begin{document}

\begin{abstract}
We study certain simple affine vertex algebras $L_{-1-h/6}(\g)$ from rank-one Argyres--Douglas theories, where $\g$ is a non-simply-laced simple
Lie algebra and $h$ is its Coxeter number.
For $L_{-3}(\fF)$, we determine the maximal ideal of
$V^{-3}(\fF)$, Zhu's algebra, and the associated variety.
We also classify the irreducible highest-weight
$L_{-3}(\fF)$-modules and determine the characters of its
irreducible ordinary modules.
The associated variety is $\overline{\OO}_{\widetilde A_1}$,
and the level $-3$ is totally collapsing for the orbit
$\OO_{\widetilde A_1}$.
The Poisson coinvariants of the $C_2$-algebra of
$L_{-3}(\fF)$ are isomorphic to $\C[\ell]/(\ell^5)$,
where $\ell$ is induced by the conformal vector.
Consequently, the characters of ordinary
$L_{-3}(\fF)$-modules satisfy a fifth-order modular linear
differential equation.
We also give modular linear differential equations of
orders five and eleven satisfied by the characters of
ordinary modules for $L_{-2}(B_3)$ and $L_{-2}(G_2)$ respectively.
\end{abstract}

\maketitle

\section{Introduction}


The correspondence between four-dimensional $\mathcal{N}=2$
superconformal field theories (SCFTs) and vertex operator algebras has revealed remarkable connections between supersymmetric quantum field theory, representation theory, and Poisson geometry. The construction of Beem, Lemos, Liendo, Peelaers, Rastelli, and van Rees
\cite{BeemEtAl} associates a vertex operator algebra to the Schur
sector of any such four-dimensional SCFT. Its operator product
expansions encode Schur correlation functions, while its vacuum
character recovers the Schur index. 
Consequently, questions about Schur operators and their relations
can be studied through the structure and representation theory of
vertex algebras. This is particularly useful for strongly coupled
theories, for which a Lagrangian description is often unavailable.

An important feature of this correspondence is that unitary
four-dimensional theories generally give rise to non-unitary
two-dimensional vertex operator algebras. In the standard normalization, the central charges and affine levels in two and four dimensions are related by
$$   c_{\mathrm{2d}}=-12c_{\mathrm{4d}},
    \qquad
    k_{\mathrm{2d}}=-\frac{1}{2}k_{\mathrm{4d}}.$$
Thus affine vertex algebras at negative levels arise naturally.
Many physically relevant examples occur at non-admissible levels,
where the structure of the simple quotient and its representations
is considerably less understood than in the admissible case. 
For instance, at admissible levels, Arakawa \cite{ArakawaAdmissible} has established the rationality of simple affine vertex algebras in category $\mathcal O$. While at non-admissible levels, there is no general results for such affine vertex algebras. Determining their defining relations, associated varieties, and
characters therefore provides both new examples in representation
theory and concrete information about protected sectors of
four-dimensional theories.

Quasi-lisse vertex algebras were introduced by Arakawa and Kawasetsu in \cite{ArakawaKawasetsu}. A vertex algebra is called quasi-lisse if its
associated variety has finitely many symplectic leaves.
Arakawa and Moreau \cite{ArakawaMoreau} proved that several simple
affine vertex algebras at non-admissible levels have associated
varieties equal to closures of minimal nilpotent orbits.
These include the $L_{-2}(D_4)$ corresponding to the simplest 4D $\mathcal{N}=2$ SCFT $SU(2)$ gauge theory with four fundamentals as well as the $L_{-3}(E_6)$ corresponding to the $E_6$ Minahan--Nemeschansky theory. Further non-admissible examples whose associated varieties are
contained in the nilpotent cone were obtained in
\cite{ArakawaMoreauSheets}.

In this article, we mainly study the simple affine vertex algebra
$L_{-3}(\fF)$. Its physical significance arises from the study of
rank-one 4D $\mathcal{N}=2$ SCFTs, where the rank
is the complex dimension of the Coulomb branch. In the investigation
of low-rank theories, Li, Xie, and Yan \cite{LiXieYan} identified
$L_{-3}(\fF)$ as the vertex algebra associated with a rank-one Argyres--Douglas theory
with flavour symmetry of type $\fF$. This example belongs to a family
of affine vertex algebras of simple Lie algebras
$$
A_1 \subset A_2 \subset G_2 \subset B_3 \subset D_4 \subset F_4 \subset E_6 \subset E_7 \subset E_8
$$
at levels
$$
    k=-\frac{h}{6}-1,
$$
where $h$ is the Coxeter number of the corresponding simple Lie
algebra. 
This series of simple Lie algebras differs slightly from the Cvitanovi\'c--Deligne exceptional series \cites{Cvitanovic:1980ed,Deligne}, as do the levels considered here from those studied in \cite{ArakawaKawasetsu}.
For $\fF$, this gives the level $k=-3$, which is non-admissible.
The Sugawara central charge is $c_{\mathrm{2d}}=-26$, corresponding
to $c_{\mathrm{4d}}=13/6$. The study of $L_{-3}(\fF)$ offers
an exceptional, non-simply-laced example in which the predictions
of the four-dimensional theory can be compared with explicit
vertex-algebraic calculations. 

A second fundamental aspect of the 4D/2D correspondence concerns the geometry of the Higgs branch. Beem and Rastelli \cite{BeemRastelli} proposed that the Higgs branch of a four-dimensional theory is isomorphic
to the associated variety of its vertex algebra. Recall that the
associated variety $X_V$ is the reduced affine Poisson variety
defined by the $C_2$-algebra
$R_V=V/C_2(V)$. The proposed identification relates the geometry
of supersymmetric vacua to the algebraic relations among protected
operators. For $L_{-3}(\fF)$, the physical prediction of
\cite{LiXieYan} is
$$
    X_{L_{-3}(\fF)}
    =\overline{\OO}_{\widetilde A_1},
$$
where $\OO_{\widetilde A_1}$ is the nilpotent orbit of
complex dimension $22$ in $\fF$. In particular, this geometry
goes beyond the minimal nilpotent orbit closures that occur in
several familiar rank-one examples.

The conformal embedding
$$L_{-3}(\fF)\hookrightarrow L_{-3}(E_6),$$ established by
Adamovi\'c and Per\v{s}e \cite{AdamovicPerse}, provides another motivation for studying this algebra.
It is an exceptional counterpart of
the renowned conformal embedding chain of quasi-lisse vertex algebras with non-admissible levels \cite{AdamovicPerse}
$$
    L_{-2}(G_2)\hookrightarrow L_{-2}(B_3)
    \hookrightarrow L_{-2}(D_4).
$$
The decomposition of these conformal embeddings was determined in \cite{AdamovicPerse}. 
The $L_{-2}(G_2)$ and $L_{-2}(B_3)$ vertex algebras were recently studied in \cite{ADFLM}, where their
representations and associated varieties were related to
predictions from rank-one Argyres--Douglas theories and to questions about
orbifold vertex algebras. The case of $L_{-3}(\fF)$ remained
beyond the methods used there. Understanding this case is
therefore a natural next step in the study of non-admissible
affine vertex algebras motivated by four-dimensional physics.

A level $k$ is called collapsing if the simple $W$-algebra
$\mathcal W_k(\mathfrak g,f)$ is generated by its conformal-weight
one subspace. It is called totally collapsing if
$\mathcal W_k(\mathfrak g,f)\cong\mathbb C\mathbf1$.
Applications of collapsing levels to the representation theory
of affine vertex algebras were developed in \cite{AKMPP}.
Further results on collapsing levels at admissible and
non-admissible levels were obtained in
\cite{ArakawaVanEkerenMoreau} and \cite{WExtensions}, respectively.

Another non-admissible quasi-lisse affine vertex algebra is
$L_{-14/3}(D_4)$, recently studied in 
\cite{AdamovicVukorepa}. Its associated variety is the closure
of the subregular nilpotent orbit in $D_4$. Moreover, its
quantum Drinfeld--Sokolov reduction with respect to a
subregular nilpotent element is $\mathbb{C}\mathbf{1}$.

In this paper, we determine the maximal ideal of the universal affine vertex algebra $V^{-3}(\fF)$ and its image in the Zhu algebra, and classify the irreducible $L_{-3}(\fF)$-modules in category $\mathcal{O}$. These results determine the defining relations of the simple vertex algebra and the conditions they impose on its highest-weight modules. We also prove
the physics prediction for its associated variety. In particular,
$L_{-3}(\fF)$ is quasi-lisse, since its associated variety has
only finitely many symplectic leaves. Moreover, the level $-3$
is totally collapsing for $\OO_{\widetilde A_1}$, namely
the corresponding simple $W$-algebra is isomorphic to
$\mathbb{C}$. This gives a further connection between the
nilpotent geometry and quantum Drinfeld--Sokolov reduction.

The character theory provides a complementary link with physics. 
Arakawa and Kawasetsu \cite{ArakawaKawasetsu} proved that ordinary characters of quasi-lisse vertex algebras satisfy modular linear
differential equations. Through the 4D/2D
correspondence, such equations impose differential constraints
on Schur indices. Their significance is that an infinite
series counting protected states is governed by a finite-order equation. Beem and Rastelli \cite{BeemRastelli} further related this
phenomenon to null relations in the vertex algebra. This generalizes the connection between modular
differential equations and null-vector relations in the
vacuum module for 2D rational conformal field theories \cite{GaberdielKeller}.

In the $L_{-3}(\fF)$ case, we compute the Poisson coinvariants and obtain
$$
    \operatorname{HP}_0(R_V)
    :=R_V/\{R_V,R_V\}
    \cong \mathbb{C}[\ell]/(\ell^5),
    \qquad V=L_{-3}(\fF),
$$
where $\ell$ denotes the class induced by the conformal vector.
This calculation supplies information that is not visible from
the reduced associated variety alone. We determine the ordinary
characters and an explicit fifth-order modular linear
differential equation satisfied by them, and prove that its
order is minimal for the vacuum character. Thus the
Poisson-algebraic calculation is accompanied by a sharp
differential constraint on the corresponding Schur
index. We also obtain modular differential equations of orders
five and eleven for $L_{-2}(B_3)$ and $L_{-2}(G_2)$, respectively,
extending this analysis to the related non-simply-laced examples.

\subsection{Summary of results}

Set $\g=\fF$.  Let $\epsilon_1,\ldots,\epsilon_4$ be orthonormal, and we use
$$
 \alpha_1=\epsilon_2-\epsilon_3,\qquad
 \alpha_2=\epsilon_3-\epsilon_4,\qquad
 \alpha_3=\epsilon_4,\qquad
 \alpha_4=\frac12(\epsilon_1-\epsilon_2-\epsilon_3-\epsilon_4)
$$
and the normalization of root vectors from
\cite[formula (3.1)]{Perse}.  The fundamental weights are
\begin{align*}
 \omega_1&=\epsilon_1+\epsilon_2,&
 \omega_2&=2\epsilon_1+\epsilon_2+\epsilon_3,\\
 \omega_3&=\tfrac12(3\epsilon_1+\epsilon_2+\epsilon_3+\epsilon_4),&
 \omega_4&=\epsilon_1.
\end{align*}
Fix a triangular decomposition $\g=\n_-\oplus\h\oplus\n_+$ of $\g=\fF$. Put $h_i=\alpha_i^\vee$ and write $[e_\alpha,f_\alpha]=h_\alpha$. The invariant form $(\cdot\mid\cdot)$ is the normalized form, so $(e_\alpha\mid f_\alpha)=2/(\alpha\mid\alpha)$. In particular, $h^\vee=9$ and $\dim\g=52$.

Let $V^k(\g)$ be the universal affine
vertex algebra and $L_k(\g)$ its simple graded quotient.  
For a
vertex algebra $V$, write
$$
 R_V=V/C_2(V),\qquad X_V=\operatorname{Specm}R_V.
$$
We identify $R_{V^k(\g)}$ with $S(\g)$ by
$x(-1)\vac\mapsto x$.

\begin{theorem}\label{thm:main}
Let $\g=\fF$. The following statements hold.
\begin{enumerate}
\item
The maximal ideal of $V^{-3}(\g)$ is generated by a
$\widehat{\g}$-singular vector of conformal weight $3$
and $\g$-weight $\omega_2$.

\item
The associated variety is
$$
X_{L_{-3}(\g)}
=\overline{\OO}_{\widetilde A_1}
=\OO_{\widetilde A_1}\sqcup\OO_{A_1}\sqcup\{0\}.
$$
In particular, $L_{-3}(\g)$ is quasi-lisse and
$\dim X_{L_{-3}(\g)}=22$.

\item
For $f\in\OO_{\widetilde A_1}$, we have
$$
\mathcal W_{-3}(\g,f)\cong\C.
$$

\item
Up to isomorphism, there are $14$ irreducible
$L_{-3}(\g)$-modules in category $\mathcal O$.
Exactly two are ordinary:
$$
L_{-3}(\g),\qquad L_{-3}(\omega_4).
$$
\end{enumerate}
\end{theorem}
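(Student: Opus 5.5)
The natural order is (1), then (4), then (2), then (3), each part feeding the next.

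\textbf{Part (1).} First produce the singular vector $v\in V^{-3}(\g)$ of conformal weight $3$ and weight $\omega_2$. Note that $V(\omega_2)=\Lambda^2\g/\g$ (dimension $1274$) appears in $\Lambda^2\g\subset \g\otimes\g$. At weight $3$, the candidates live in $\g(-1)^3\vac$, $\g(-2)\g(-1)\vac$ and $\g(-3)\vac$, so $v$ is a finite linear combination, cheapest to write with $e_{\theta}$-type leading term. Solve for its coefficients by imposing $e_i(0)v=0$ for $i=1,\dots,4$ and $f_\theta(1)v=0$. Existence at $k=-3$ could alternatively be justified by the Kac--Kazhdan determinant, or via the conformal embedding $L_{-3}(\fF)\hookrightarrow L_{-3}(E_6)$: the Arakawa--Moreau weight-$2$ relations of $L_{-3}(E_6)$ restrict to $\fF$ relations. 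Then let $I=\langle v\rangle$ and $\tilde L=V^{-3}(\g)/I$. Maximality of $I$ will be deduced at the end, as explained in the last paragraph.

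\textbf{Part (4).} Compute the image of $v$ in Zhu's algebra $U(\g)$. The $U(\g)$-submodule of $\g$-singular vectors in the $V(\omega_2)$-isotypic part gives, in the style of Perše and Adamović--Perše, polynomials $p_r(\lambda)$ for the top component $r\in V(\omega_2)_0$ acting on $v_\lambda$. Highest weight modules of $\tilde L$ correspond to the common zeros of these polynomials, i.e.\ of the projections to $S(\h)$ of the $\h$-weight-zero vectors of $V(\omega_2)$. Then solve this polynomial system in $\lambda\in\h^*$, expecting $14$ solutions. The two dominant integral ones are $0$ and $\omega_4$; there $\dim V(\omega_4)=26$, and the Sugawara weight is $(\omega_4,\omega_4+2\rho)/12=1$ (to be checked). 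Ordinarity of the others fails by non-integrality. Completeness of the list for $L_{-3}$ follows once $I$ is maximal, and existence follows because each $L(\lambda)$ is annihilated by $I$.

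\textbf{Part (2).} In $R_{\tilde L}=S(\g)/\langle \bar v\rangle$, the image $\bar v$ is a cubic $V(\omega_2)\subset S^3(\g)$, possibly with lower-degree contributions. Show that its zero locus in $\g^*\cong\g$ is $\overline{\OO}_{\widetilde A_1}$ in three steps.
\begin{enumerate}
\item Show $\bar v$ lies in the ideal of $\overline{\OO}_{\widetilde A_1}$ by evaluating on a representative $f_{\widetilde A_1}$ (e.g.\ a root vector of a short root) and using $G$-equivariance.
\item Show the zero set is contained in the nilpotent cone: the invariants of degree $2,6,8,12$ must lie in the radical. I expect this to follow since the $\g$-module generated by $\bar v$ produces $\Omega$ times lower terms, or because Casimir relations force nilpotency via the Zhu-algebra finiteness of part (4) (only finitely many $\h$-weights means $\Omega^N\in\sqrt{I}$, as in Arakawa--Moreau).
\item Exclude the larger orbits $A_1+\widetilde A_1$ and higher by evaluating $\bar v$ at representatives.
\end{enumerate}
Then $X_{L_{-3}}\subseteq X_{\tilde L}=\overline{\OO}_{\widetilde A_1}$. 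Equality holds because $X_{L}$ is $G$-stable, conic and nonzero (the level is not lisse-admissible), and cannot be $\overline{\OO}_{A_1}$: by Arakawa--Moreau, a minimal-orbit associated variety forces $k=-h^\vee/6-1=-5/2$.

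\textbf{Part (3) and maximality.} By Arakawa's results, $H_{f}(L_{-3}(\g))$ is either zero or $\mathcal W_{-3}(\g,f)$, and its associated variety is the Slodowy slice intersection $\mathcal S_f\cap X$, which is a point. So $\mathcal W_{-3}(\g,f)$ is lisse. Compute its central charge with the Kac--Wakimoto formula for $f\in\widetilde A_1$ at $k=-3$, and check that it is $0$ and that the weight-$1$ subspace (the centralizer $\g^\natural=A_3$ at an appropriate level) collapses to level $0$, giving $\C$.

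Maximality of $I$ I would get from a character argument: compute $\operatorname{ch}\tilde L$ at low degree and compare with the Kazama--Suzuki/Kac--Wakimoto–type character obtained from $H_f$, or show any further singular vector would generate a highest-weight module with weight outside the $14$. \textbf{The main obstacle} is the nilpotency and orbit exclusion of step (2), along with proving that $I$ is maximal. For the latter I would first try to show directly that the quotient is simple by the minimality argument: any proper ideal $J\supsetneq I$ has top singular weight among the $14$, and those weights are excluded by conformal-weight computations.
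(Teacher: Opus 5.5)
\textbf{The gap is your step (b) of Part (2): nilpotency.} Once $I$ is known to be maximal, $X_{L_{-3}(\g)}$ is the zero set of $I_W=S(\g)W$. Here $W=U(\g)\bar v\cong V_\g(\omega_2)$ consists of homogeneous cubics: the image in $R_{V^{-3}(\g)}$ of a conformal-weight-$3$ vector has no lower-degree terms. Neither of your two mechanisms proves $V(I_W)\subset\mathcal N_\g$.

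\begin{itemize}
\item The first has no content as stated. $W$ is irreducible and nontrivial, contains no invariants, and you give no way to produce a power of the Casimir inside $I_W$.
\item The second is not a valid implication. Zhu's algebra only controls $\operatorname{gr}A(V)$, which is a quotient of $R_V$, so its variety lies \emph{inside} $X_V$; that is the wrong direction. Moreover, a finite common zero set of the Harish--Chandra polynomials $p_r$ does not force their top-degree parts to have only the trivial common zero (compare $x^2-1,\,xy$ with $x^2,\,xy$).
\end{itemize}

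What is actually needed is the following.
\begin{itemize}
\item Since $X$ is closed, conic and $G$-stable and $y_s\in\overline{G\cdot y}$, it suffices to prove $X\cap\h=\{0\}$.
\item Restriction to $\h$ kills every vector of nonzero weight, so $X\cap\h$ is the zero set of $\Psi(W_0)$, with $W_0=W^{\h}$.
\item Since $\sigma_3(v')=v''$, these restrictions are exactly the cubic leading parts $p_r^{(3)}$ of the Harish--Chandra polynomials.
\end{itemize}
So you must check that the $p_r^{(3)}$, not the $p_r$, have no common nonzero zero. The paper does this by showing they span $S^3(\h)$, via a $20\times20$ determinant. Without this, step (c) can only exclude nilpotent orbits, and quasi-lisseness is not proved.

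\textbf{The rest of your route.} With (b) supplied, your Part (2) is sound and more elementary than the paper's. The paper gets the lower bound from the conformal embedding $L_{-3}(\g)\hookrightarrow L_{-3}(\eE)$ together with $X_{L_{-3}(\eE)}=\overline{\OO}^{E_6}_{\min}$. It gets the upper bound from Drinfeld--Sokolov reduction at $f_{\epsilon_1}$: the image of $v$ is a nonzero current, so the reduction is a Virasoro quotient and $\dim(X\cap\mathcal S_f)\le1$.

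\begin{itemize}
\item Your step (a) is immediate from weights. The only monomial not vanishing at $e_{\epsilon_1}$ is $f_{\epsilon_1}^3$, of weight $-3\epsilon_1$. Since $3\omega_4-\omega_2=\alpha_3+2\alpha_4>0$, $-3\epsilon_1$ is not a weight of $W$.
\item Step (c) needs only the orbit $\OO_{A_1+\widetilde A_1}$, which lies in the closure of every orbit not contained in $\overline{\OO}_{\widetilde A_1}$. At a point of it, $v''(f_{\epsilon_1}+tf_{\epsilon_2+\epsilon_3})=-4t\neq0$.
\item Your final minimality argument for maximality is the paper's. For $\omega_4$, whose conformal weight is exactly $1$, add that $V^{-3}(\g)_1\cong\g$ has no $V_\g(\omega_4)$-component; the remaining twelve weights are not dominant integral.
\item Since maximality precedes Part (2), $X_{L_{-3}(\g)}=V(I_W)$ directly, so the Arakawa--Moreau hedge is unnecessary. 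It is also misquoted: admissible levels of $\fF$ with denominator $2$ also give $\overline{\OO}_{\min}$.
\end{itemize}

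In Part (3), ``$H^0_{DS,f}(L_{-3}(\g))$ is zero or $\mathcal W_{-3}(\g,f)$'' is not a general theorem, and being lisse with $c=0$ does not by itself force $\C$. What works is the paper's argument, which your level-$0$ observation already points to:
\begin{itemize}
\item the currents generate a quotient of $V^0(\mathfrak{sl}_4)$, hence lie in the radical of the invariant form;
\item $J^{\{a\}}_{(0)}W^u=W^{[a,u]}$ then places the $\mathbf4\oplus\mathbf4^*\oplus\mathbf6$ generators in the maximal ideal;
\item what remains is the Virasoro algebra at $c=0$.
\end{itemize}
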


\begin{theorem}\label{thm:main-characters}
Let $V=L_{-3}(\fF)$, and let $\ell$ be the image of
the conformal vector in $R_V$.
\begin{enumerate}
\item
The space of Poisson coinvariants satisfies
$$
\operatorname{HP}_0(R_V)
=\frac{R_V}{\{R_V,R_V\}}
\cong\C[\ell]/(\ell^5)
$$
as a graded vector space.

\item
The normalized characters of all ordinary $V$-modules
satisfy the monic holomorphic
$\mathrm{SL}_2(\mathbb Z)$-MLDE of order five
given in Theorem~\ref{thm:mlde}.
The vacuum character satisfies no monic holomorphic
$\mathrm{SL}_2(\mathbb Z)$-MLDE of smaller order.
\end{enumerate}
\end{theorem}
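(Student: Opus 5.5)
We prove Theorem~\ref{thm:main-characters} by combining Theorem~\ref{thm:main} with the Arakawa--Kawasetsu framework. For part (1), we first describe $R_V$ explicitly. By Theorem~\ref{thm:main}(1), $R_V=S(\g)/I$, where $I$ is the $\g$-stable ideal generated by the image $\bar v$ of the singular vector. This image is a cubic $\g$-submodule $V(\omega_2)\subset S^3(\g)$, and $I$ is Poisson since it is the symbol of a vertex ideal. The grading on $R_V$ comes from conformal weight, so it matches the polynomial degree. Poisson coinvariants of a $\g$-Poisson algebra generated by $\g$ satisfy $\{x,a\}=x\cdot a$ for $x\in\g$. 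Hence $\{R_V,R_V\}$ contains $\g\cdot R_V$, and in fact equals it, because $R_V$ is generated by $\g$ and the bracket is a derivation. Therefore
$$\HP(R_V)=R_V/\g\cdot R_V=(R_V)_{\g},$$
the space of $\g$-coinvariants. By complete reducibility of each graded piece, this is isomorphic to $(R_V)^{\g}$. The problem thus reduces to computing the $\g$-invariants of $S(\g)/I$ degree by degree.

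Next we identify these invariants. $S(\g)^{\g}$ is a polynomial ring in the Casimir $\ell$ of degree $2$ and in invariants of degrees $6,8,12$. The image of $\ell$ in $R_V$ is the class of the Sugawara vector, up to scalar. We must show two things: $\ell^5\in I$ while $\ell^4\notin I$, and every invariant of degree $6,8,12$ lies in $I+\C[\ell]$, so that in total $I^{\g}+(\text{invariants})$ leaves exactly $1,\ell,\dots,\ell^4$. We will use the standard exact sequence $0\to (I)^{\g}\to S(\g)^{\g}\to R_V^{\g}\to 0$, valid by reductivity, and compute $\dim (I_d)^{\g}$ for $d\le 12$. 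The key steps are as follows.
\begin{itemize}
\item The ideal $I_d=S^{d-3}(\g)\cdot V(\omega_2)$ contains invariants exactly when $V(\omega_2)^*\cong V(\omega_2)$ occurs in $S^{d-3}(\g)$ paired appropriately.
\item The relations from Theorem~\ref{thm:main} (for example, that the Zhu algebra image forces $\ell$ to act by the scalar values on the finitely many top spaces) give nilpotence of $\ell$ on $R_V$.
\item We prove $\ell^5\in I$ and $\ell^4\notin I$.
\end{itemize}
To show $\ell^4\notin I$ we evaluate on the variety. This does not work, since $\ell$ vanishes on the nilpotent cone, so $\ell$ is nilpotent in $R_V$ and the vanishing is purely a non-reducedness question. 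We therefore instead compute explicitly, by computer, the projection of $I_8$ onto the one-dimensional space spanned by $\ell^4$ in $S^8(\g)^{\g}$, modulo the other primitive invariants. The higher generators $p_6,p_8,p_{12}$ must be shown to lie in $I+\C[\ell]$ by the same linear algebra in degrees $6,8,12$. This finite-dimensional invariant-theoretic computation, including the verification that $\ell^5\in I$ (degree $10$), is the main obstacle. We would carry it out using an explicit model of $\fF$ in the root basis of \cite{Perse} and the explicit singular vector.

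For part (2), Arakawa--Kawasetsu's argument shows that if $\ell^n\in\{R_V,R_V\}+\ldots$, more precisely if $L_{[-2]}^n\vac$ lies in $C_{[2]}$-type subspace modulo lower terms, then characters of ordinary modules satisfy a monic order-$n$ MLDE. Part (1) gives $n=5$. Explicit coefficients are fixed by matching with the two ordinary characters of $L_{-3}(\g)$ and $L_{-3}(\omega_4)$ from Theorem~\ref{thm:mlde}. These characters are computed from the character formulas, given as Kazhdan--Lusztig or Weyl--Kac type expansions via Theorem~\ref{thm:main}, up to enough $q$-orders to solve for the finitely many Eisenstein coefficients. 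The resulting equation is then verified as stated in Theorem~\ref{thm:mlde}.

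For minimality, suppose the vacuum character satisfied a monic holomorphic MLDE of order $m\le4$. Its coefficients lie in the finite-dimensional spaces $M_{2j}$ spanned by $E_4,E_6$ monomials. We substitute the $q$-expansion of $\chi_V$ and show the resulting overdetermined linear system has no solution. Equivalently, the indicial equation at $q=0$ must have $-c/24=13/12$ as a root with the first several coefficients matching, and the few free parameters, at most two for $m\le4$, are exhausted by low-order terms, with a contradiction appearing at a computable order.
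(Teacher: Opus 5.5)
Your outline follows the paper's proof. You reduce $\HP(R_V)$ to $R_V^{\g}$ and show that, modulo $I$, the generators $P_6,P_8,P_{12}$ fall into $\C[Q]$, with $Q^5\in I$ and $Q^4\notin I$. You then use $1,\ell,\dots,\ell^4$ in the Arakawa--Kawasetsu surjection $\HP(R_V)\otimes A\twoheadrightarrow\gr\cH(V)$ and fit the coefficients from the $q$-expansion.

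\textbf{The gap: your test for $\ell^4\notin I$ does not work.} You propose to project $(I_8)^{\g}$ onto $\C\ell^4$ along $\langle QP_6,P_8\rangle$. A nonzero projection proves nothing, and a zero projection need not occur. Indeed, as soon as $P_6\equiv cQ^3 \pmod I$ with $c\neq0$, the element $QP_6-cQ^4$ lies in $(I_8)^{\g}$ and has nonzero $Q^4$-component, whether or not $Q^4\in I$. What you need is a linear functional on $S^8(\g)^{\g}$ that vanishes on $(I_8)^{\g}$ but not on $Q^4$.

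The paper obtains this functional from the invariant apolar pairing. The invariant part of $(S^{5}(\g)W)^{\perp}$ is $\ker\delta_8$, where $\delta_d(F)=v''(\partial)F$. Hence $\dim(R_V^{\g})_d=\dim\ker\delta_d$, and a nonzero vector of $\ker\delta_8$ with nonzero pairing against $Q^4$ is the certificate. The same duality is what makes your ``linear algebra in degrees $6,8,10,12$'' feasible. It reduces everything to applying one cubic differential operator to at most five invariants, instead of constructing $(I_d)^{\g}$ inside $S^{10}(\g)$ or $S^{12}(\g)$. Your first bullet supplies the bounds $\dim(I_6)^{\g}\le[S^3(\g):V_\g(\omega_2)]=1$ and $\dim(I_8)^{\g}\le[S^5(\g):V_\g(\omega_2)]=2$.

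\textbf{Smaller corrections.}
\begin{enumerate}
\item Nilpotence of $\ell$ in $R_V$ comes from $X_V\subset\mathcal N_\g$ (Theorem~\ref{thm:quasilisse}), not from Zhu's algebra. You need the exact nilpotency index anyway, so this only affects the motivation.
\item No Kazhdan--Lusztig input is needed. Since $J$ begins in weight $3$ with $J_3=U(\g)v\cong V_\g(\omega_2)$ of dimension $1274$, the vacuum coefficients $1,52,1430,26286$ come directly from $V^{-3}(\g)$. These four already determine $a,b,c,d$.
\item For minimality, order four has three free parameters, not two: the coefficients of $E_4\D^{(2)}$, $E_6\D$ and $E_4^2$. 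The four equations at $n=0,\dots,3$ still overdetermine them.
\item If $\mathcal L$ is monic holomorphic of order $m$, then $\D\circ\mathcal L$ is monic holomorphic of order $m+1$. So ruling out order four suffices.
\end{enumerate}
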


Let $\sigma$ be the Dynkin diagram involution of $\eE$, whose
fixed-point subalgebra is $\fF$, and put
$\Gamma=\langle\sigma\rangle\cong\mathbb Z/2\mathbb Z$. By
\cite[Section~2.3.3]{FJLS23}, the projection
$$
p:\eE\longrightarrow\fF,
\qquad x\longmapsto \frac{x+\sigma(x)}{2},
$$
restricts to a finite, generically two-to-one morphism
$$
p:\overline{\OO}_{\min}^{E_6}
\longrightarrow
\overline{\OO}_{\widetilde A_1}
$$
and induces an isomorphism of Poisson varieties
$$
\overline{\OO}_{\min}^{E_6}/\Gamma
\cong
\overline{\OO}_{\widetilde A_1}.
$$
Hence the second result of Theorem \ref{thm:main} can be reformulated as follows
$$
X_{V^\Gamma}=X_V/\Gamma, \qquad
\text{ for } V=L_{-3}(\eE),
$$
where the associated variety of the $L_{-3}(\eE)$ is the minimal nilpotent orbit closure of $\eE$ by \cite{ArakawaMoreau}.  The analogous folding from $D_4$ to $G_2$ and $B_3$ was studied in \cite{ADFLM}.

The paper is organized as follows. In Section \ref{sec:prelim}, we first recall the necessary
background on affine vertex algebras and Zhu algebras. 
In Section \ref{sec:repn}, we
determine the maximal ideal of $V^{-3}(F_4)$ and describe the Zhu
algebra of its simple quotient, leading to the classification of
irreducible $L_{-3}(F_4)$-modules in category $\mathcal{O}$.
In Section \ref{sec:ass}, we determine the associated variety and establish the total
collapsing property for the orbit $\OO_{\widetilde A_1}$.
In Section \ref{sec:MLDE}, we subsequently compute the Poisson coinvariants of the
$C_2$-algebra and determine the ordinary characters and their
fifth-order modular linear differential equation. 
We conclude in Section \ref{sec:MLDE2}, where we obtain modular
differential equations of orders five and eleven for $L_{-2}(B_3)$
and $L_{-2}(G_2)$, respectively.

\section{Preliminaries and conventions}\label{sec:prelim}
\subsection{Modules for affine vertex algebras}

For $a\in\g$ and $m\in\mathbb Z$, write $a(m)=a\otimes t^m$. The Lie brackets in $\widehat{\g}$ are
\begin{equation*}
 [a(m),b(n)]=[a,b](m+n)+m\delta_{m+n,0}(a| b)K.
\end{equation*}
The vacuum module at level $k$ is
$$
 V^k(\g)=U(\widehat\g)\otimes_{U(\g[t]\oplus\C K)}\C_k,
$$
where $\g[t]$ acts trivially and $K$ acts by $k$. At $k\ne-h^\vee$, its Sugawara vector is
\begin{equation*}
 \omega=\frac1{2(k+h^\vee)}\sum_a u_a(-1)u^a(-1)\vac ,
\end{equation*}
for dual bases $\{u_a\}$ and $\{u^a\}$. The central charge is
\begin{equation*}
c(k)=\frac{k\dim\g}{k+h^\vee}
\end{equation*}
A highest-weight vector of a $\widehat{\g}$-module at level $k$, with $\g$-weight $\lambda$, has conformal weight
\begin{equation}\label{eq:2:conformal-weight}
 \Delta(\lambda)=\frac{(\lambda | \lambda+2\rho)}{2(k+h^\vee)}.
\end{equation}

We use category $\mathcal O$ for $\widehat{\g}$ with its standard Borel subalgebra; the degree derivation is fixed by $L_0$, so no additional degree shifts are counted. We denote by $L_k(\lambda)$ the irreducible highest-weight
$\widehat{\g}$-module at level $k$ whose lowest
conformal-weight subspace is the irreducible
$\g$-module $V_\g(\lambda)$. An ordinary $V^k(\g)$-module $M$ has a lower-bounded, semisimple $L_0$-grading with finite-dimensional eigenspaces. Its character is
$$
 \chi_M(\tau)=\tr_M q^{L_0-c/24},\qquad q=e^{2\pi i\tau}.
$$
An irreducible ordinary $V^k(\g)$-module has a finite-dimensional irreducible $\g$-module as its lowest eigenspace, and hence has dominant integral $\g$-highest weight.

\subsection{Zhu algebras, Poisson algebras and reduction}
Zhu's algebra of the universal affine vertex algebra is $U(\g)$ \cites{FrenkelZhu,Zhu}. Let $v$ be a $\widehat{\g}$-singular vector in $V^k(\g)$,
and let $J$ be the ideal of $V^k(\g)$ generated by $v$.
Under the identification $A(V^k(\g))\cong U(\g)$,
the image of $v$ generates the two-sided ideal defining
$A(V^k(\g)/J)$. Explicit singular-vector and Zhu-algebra calculations at negative integral levels in type $A$ give precedents for this approach \cite{APNonrational}. We shall use the unshifted Harish--Chandra projection
$$
 \HC:U(\g)_0\longrightarrow U(\h)=S(\h),
$$
obtained by projecting along $\n_-U(\g)+U(\g)\n_+$. In particular, $\HC(r)(\lambda)$ is the scalar by which a zero-weight element $r$ acts on a highest-weight vector.

For a vertex algebra $U$, the space $C_2(U)$ is spanned by $a_{(-2)}b$. The product and Poisson bracket on $R_U$ are
$$
 \bar a\,\bar b=\overline{a_{(-1)}b},\qquad
 \{\bar a,\bar b\}=\overline{a_{(0)}b}.
$$
For $U=V^k(\g)$, this identifies $R_U$ with $S(\g)$, with the Lie--Poisson bracket and with $a(-1)\vac$ mapping to $a$. This classical limit is the degree-zero part of the vertex Poisson algebra supplied by Li's filtration \cite{LiAbelianizing}. We distinguish the affine scheme $\Spec R_U$ from its reduced associated variety $X_U$ throughout the reduction argument.

For a nilpotent element $f$ and an $\mathfrak{sl}_2$-triple $(e,h,f)$, set $x=h/2$ and $\mathcal S_f=f+\g^e$. We write $H^0_{DS,f}$ for quantum Drinfeld--Sokolov reduction, and
$$
 \mathcal W^k(\g,f)=H^0_{DS,f}(V^k(\g)),\qquad
 \mathcal W_k(\g,f)
$$
for the universal $W$-algebra and its simple graded quotient. We use the BRST construction of generalized quantum Hamiltonian reduction in \cite{KacRoanWakimoto}. The exactness and associated-scheme theorems of \cite{ArakawaDS} apply to the $V^{-3}(\g)$-modules considered below. 

\section{Representations of $L_{-3}(\fF)$}\label{sec:repn}

The reducibility of the vacuum module $V^{-3}(\g)$ follows
from the general criterion of Gorelik and Kac
\cite{GorelikKac}.  In this section we construct a $\widehat{\g}$-singular vector of conformal
weight $3$ in $V^{-3}(\g)$ and prove that it generates
the maximal proper ideal of $V^{-3}(\g)$.
Using Zhu's algebra, we classify the irreducible
$L_{-3}(\g)$-modules in category $\mathcal O$.
Exactly two of these modules are ordinary.

\subsection{A singular vector in \texorpdfstring{$V^{-3}(\g)$}{V(-3,g)}}\label{sec:sing}

Put
$$
 \theta=\epsilon_1+\epsilon_2,\qquad
 \eta=\epsilon_1+\epsilon_3,
$$
and
\begin{align*}
\gamma_1&=\tfrac12(\epsilon_1-\epsilon_2+\epsilon_3-\epsilon_4),&
\gamma_2&=\tfrac12(\epsilon_1+\epsilon_2+\epsilon_3+\epsilon_4),\\
\gamma_3&=\tfrac12(\epsilon_1-\epsilon_2+\epsilon_3+\epsilon_4),&
\gamma_4&=\tfrac12(\epsilon_1+\epsilon_2+\epsilon_3-\epsilon_4),\\
\gamma_5&=\tfrac12(\epsilon_1+\epsilon_2-\epsilon_3-\epsilon_4),&
\gamma_6&=\tfrac12(\epsilon_1+\epsilon_2-\epsilon_3+\epsilon_4).
\end{align*}
Define $v''\in S^3(\g)$ by
\begin{align}
v''={}&
 -2f_{\epsilon_2-\epsilon_3}e_\theta^2
 -2e_{\epsilon_3-\epsilon_4}e_{\epsilon_1+\epsilon_4}e_\theta
 -e_{\epsilon_3}e_{\epsilon_1}e_\theta +2e_{\epsilon_3+\epsilon_4}e_{\epsilon_1-\epsilon_4}e_\theta\notag\\
&
 +2e_{\epsilon_2-\epsilon_3}e_{\eta}^2 +2e_{\epsilon_2-\epsilon_4}e_{\epsilon_1+\epsilon_4}e_{\eta}
 +e_{\epsilon_2}e_{\epsilon_1}e_{\eta}
 -2e_{\epsilon_2+\epsilon_4}e_{\epsilon_1-\epsilon_4}e_{\eta}
 \notag\\
&+2e_{\epsilon_2+\epsilon_3}e_{\epsilon_1-\epsilon_2}e_\theta
 -2e_{\epsilon_2+\epsilon_3}e_{\epsilon_1-\epsilon_3}e_{\eta}
 +4e_{\epsilon_2+\epsilon_3}e_{\epsilon_1-\epsilon_4}
       e_{\epsilon_1+\epsilon_4}\notag\\
&
 -e_{\epsilon_2+\epsilon_3}e_{\epsilon_1}^2 -e_{\gamma_1}e_{\gamma_2}e_\theta
 +e_{\gamma_3}e_{\gamma_4}e_\theta
 +e_{\gamma_5}e_{\gamma_2}e_{\eta}
 -e_{\gamma_6}e_{\gamma_4}e_{\eta} \notag\\
&+e_{\gamma_4}^2e_{\epsilon_1+\epsilon_4}
 -e_{\gamma_4}e_{\gamma_2}e_{\epsilon_1}
 +e_{\gamma_2}^2e_{\epsilon_1-\epsilon_4}
 +2e_{\eta}e_\theta h_{\epsilon_2-\epsilon_3} .
\label{eq:3:singular-symbol}
\end{align}
Define $\widehat v''\in V^{-3}(\g)$ as follows. In each monomial
of \eqref{eq:3:singular-symbol}, replace every factor $a\in\g$ by $a(-1)$, keep
the displayed order, and apply the resulting product to $\vac$.

\begin{theorem}\label{thm:singular}
The vector
\begin{equation}
 v=10\bigl(e_\eta(-2)e_\theta(-1)
           -e_\theta(-2)e_\eta(-1)\bigr)\vac+\widehat v''
\label{eq:3:singular-vector}
\end{equation}
is a $\widehat{\g}$-singular vector in $V^{-3}(\g)$.
Its conformal weight is $3$ and its $\g$-weight is
$$
 \omega_2=2\epsilon_1+\epsilon_2+\epsilon_3,
$$
and its image in $R_{V^{-3}(\g)}$ is $v''$.
\end{theorem}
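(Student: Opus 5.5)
The plan is to check the three claims in order of difficulty: the weight data first, then the image in $R_{V^{-3}(\g)}$, and finally singularity, which is the real content.

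\emph{Weight data.} Every monomial in \eqref{eq:3:singular-vector} has total mode degree $-3$, so $L_0$ acts by $3$. For the $\g$-weight, I check termwise that each monomial of \eqref{eq:3:singular-symbol} has weight $\theta+\eta=2\epsilon_1+\epsilon_2+\epsilon_3=\omega_2$. For example, $-(\epsilon_2-\epsilon_3)+2\theta=\theta+\eta$ and $\gamma_1+\gamma_2+\theta=\epsilon_1+\epsilon_3+\theta$; the remaining monomials are checked the same way.

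\emph{Image in $R_{V^{-3}(\g)}$.} The terms $e_\eta(-2)e_\theta(-1)\vac$ and $e_\theta(-2)e_\eta(-1)\vac$ lie in $C_2$. In each term of $\widehat v''$, reordering the modes $a(-1)$ changes the vector by commutators $[a,b](-2)$, which also lie in $C_2$. Under $x(-1)\vac\mapsto x$ the image of $v$ is therefore $v''$.

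\emph{Singularity: reduction to finitely many generators.} The affine Kac--Moody algebra is generated by its positive part together with $\h$, and $\widehat{\n}_+$ is generated by $e_{\alpha_i}(0)$ for $i=1,\dots,4$ and $f_\theta(1)$, where $\theta$ is the highest root, which here coincides with the $\theta=\epsilon_1+\epsilon_2$ above. So it suffices to show
\[
e_{\alpha_i}(0)v=0\quad (i=1,\dots,4),\qquad f_\theta(1)v=0 .
\]

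\emph{The zero-mode conditions.} Zero modes act on $V^{-3}(\g)_3$ as derivations compatible with the PBW filtration, so $e_{\alpha_i}(0)$ acts on the leading symbol $v''$ through $\ad e_{\alpha_i}$. The conditions split in two:
\begin{itemize}
\item First, $v''$ should be a highest-weight vector of weight $\omega_2$ for the adjoint action on $S^3(\g)$. This is a finite computation using the structure constants of \cite{Perse}, and it is plausible that $v''$ was built precisely to realize the copy of $V_\g(\omega_2)\subset S^3(\g)$, which occurs as a $\g$-submodule generating the ideal of $\overline{\OO}_{\widetilde A_1}$.
\item Second, the corrections coming from reordering, which lie in the $a(-2)b(-1)$ sector, must cancel against the $10(\cdots)$ term. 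The vector $(e_\eta(-2)e_\theta(-1)-e_\theta(-2)e_\eta(-1))\vac$ is itself killed by the $e_{\alpha_i}(0)$ only up to terms of the same form. So I would compute $e_{\alpha_i}(0)v$ exactly, inside the weight space $\omega_2+\alpha_i$ of $V^{-3}(\g)_3$, in a PBW basis.
\end{itemize}

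\emph{The condition $f_\theta(1)v=0$ (main obstacle).} The result lies in the weight space $\omega_2-\theta=\eta$ of degree $2$, spanned by vectors $a(-1)b(-1)\vac$ and $x(-2)\vac$. Apply $f_\theta(1)$ to each monomial using
\[
[f_\theta(1),a(-1)]=[f_\theta,a](0)+\delta_{a,e_\theta}(f_\theta\mid e_\theta)k,
\]
and similarly for $(-2)$-modes, then move the zero modes to the right. This is where the level enters: the coefficient $k=-3$ is required, and the coefficient $10$ should be forced by it. I expect this to be the hardest step purely because of bookkeeping over roughly twenty monomials with signs from the chosen normalization. I would organize it by collecting coefficients of the basis vectors $e_\beta(-1)e_{\beta'}(-1)\vac$ with $\beta+\beta'=\eta$, of $e_\eta(-1)h(-1)\vac$, and of $e_\eta(-2)\vac$, and verify each vanishes. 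In practice this is best done by computer algebra, with a hand check on the $e_\eta(-2)\vac$ coefficient, which gives a linear equation in $k$ and the constant $10$.

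As a consistency check, the conformal weight formula \eqref{eq:2:conformal-weight} gives
\[
\Delta(\omega_2)=\frac{(\omega_2\mid\omega_2+2\rho)}{2\cdot 6}=\frac{36}{12}=3,
\]
using $(\omega_2\mid\omega_2)=6$ and $(\omega_2\mid 2\rho)=30$. This is consistent with a singular vector of conformal weight $3$ existing in $V^{-3}(\g)$.
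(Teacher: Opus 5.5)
Your proposal follows the paper's proof: you reduce singularity to $e_{\alpha_i}(0)v=0$ $(1\le i\le 4)$ and $f_\theta(1)v=0$ and check these by direct calculation, you read off the weights termwise, and you get the image $v''$ in $R_{V^{-3}(\g)}$ because the $(-2)$-mode terms and the reordering commutators lie in $C_2$.

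One correction to your expectations. The vector $\bigl(e_\eta(-2)e_\theta(-1)-e_\theta(-2)e_\eta(-1)\bigr)\vac$ is annihilated exactly by every $e_{\alpha_i}(0)$, since it is the image of the highest-weight vector $e_\eta\wedge e_\theta$ of $V_\g(\omega_2)\subset\wedge^2\g$. Hence the reordering corrections in $e_{\alpha_i}(0)\widehat v''$ cancel among themselves, and the coefficient $10$ is detected only by $f_\theta(1)$. Also, $I_W$ is not radical ($Q^5\in I_W$ but $Q\notin I_W$), so $W$ cuts out $\overline{\OO}_{\widetilde A_1}$ only set-theoretically.
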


\begin{proof}
A direct calculation gives
$$
 e_{\alpha_i}(0)v=0\quad(1\leq i\leq4),\qquad
 f_\theta(1)v=0.
$$
These operators generate $\widehat{\mathfrak n}_+$, so $v$ is
singular. Its conformal weight and $\g$-weight follow from \eqref{eq:3:singular-vector}.
Its image in $R_{V^{-3}(\g)}$ is $v''$, since the terms
containing a mode $-2$ vanish.
\end{proof}

Let $J$ be the ideal of $V^{-3}(\g)$ generated by $v$, and put
$$
 \widetilde L_{-3}(\g)=V^{-3}(\g)/J.
$$

\subsection{Irreducible modules in category \texorpdfstring{$\mathcal O$}{O}}

Under the identification $A(V^{-3}(\g))\cong U(\g)$,
\begin{equation}
 [x_1(-n_1)\cdots x_s(-n_s)\vac]
 =(-1)^{\sum_i(n_i-1)}x_s\cdots x_1.
\label{eq:3:zhu-image}
\end{equation}
Let $v'\in U(\g)$ be the image of $v$, and put
$$
 R=U(\g)_{\operatorname{ad}}v',\qquad
 R_0=R^{\mathfrak h}.
$$
For $r\in R_0$, set $p_r=\HC(r)\in S(\mathfrak h)$. Then
$$
 rv_\lambda=p_r(\lambda)v_\lambda,
$$
where $v_\lambda$ is a highest-weight vector of
$V_\g(\lambda)$. The following lemma follows from Zhu's correspondence
and \cite{Ad,AM,ArakawaAdmissible}.

\begin{lemma}\label{lem:zhu-criterion}
An irreducible highest-weight $V^{-3}(\g)$-module
$L_{-3}(\lambda)$ is a $\widetilde L_{-3}(\g)$-module if and
only if
$$
 p_r(\lambda)=0\qquad(r\in R_0).
$$
\end{lemma}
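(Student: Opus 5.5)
We will combine Zhu's correspondence with a reduction from the two-sided ideal of $U(\g)$ to its zero-weight part. By Zhu's theorem, an irreducible $\mathbb Z_{\ge0}$-graded $V^{-3}(\g)$-module is a module over $\widetilde L_{-3}(\g)=V^{-3}(\g)/J$ if and only if its top component is a module over
$$A(\widetilde L_{-3}(\g))\cong U(\g)/\langle v'\rangle.$$
Here $\langle v'\rangle$ is the two-sided ideal generated by the image $v'$ of $v$, as recalled in Section~\ref{sec:prelim}. For $L_{-3}(\lambda)$ the top component is $V_\g(\lambda)$. So the condition is that $\langle v'\rangle$ annihilates $V_\g(\lambda)$, or equivalently that it annihilates the irreducible highest weight $\g$-module of highest weight $\lambda$. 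This is the same module, also when $\lambda$ is not dominant integral, since $L_{-3}(\lambda)$ lies in category $\mathcal O$.

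First, since $v$ is $\widehat\g$-singular, in particular $\g$-singular, $v'$ is a highest weight vector for the adjoint action. Hence $R=U(\g)_{\ad}v'$ is a finite-dimensional $\ad$-submodule isomorphic to $V_\g(\omega_2)$. The two-sided ideal generated by $v'$ equals $U(\g)R=RU(\g)$, because $xr=rx+[x,r]$ with $[x,r]\in R$. Thus $\langle v'\rangle$ annihilates $V_\g(\lambda)$ if and only if $R\cdot V_\g(\lambda)=0$.

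For necessity, let $r\in R_0$. Then $rv_\lambda$ has weight $\lambda$, so $rv_\lambda=p_r(\lambda)v_\lambda$; this is exactly the definition of $\HC$ as projection along $\n_-U(\g)+U(\g)\n_+$. Hence annihilation forces $p_r(\lambda)=0$.

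For sufficiency, which is the step requiring the argument of \cite{Ad,AM}, assume $p_r(\lambda)=0$ for all $r\in R_0$. Consider the subspace $R\,v_\lambda\subset V_\g(\lambda)$ and the $\g$-submodule it generates, $U(\g)Rv_\lambda=RU(\g)v_\lambda=R\,V_\g(\lambda)$. This is a $\g$-submodule of the irreducible module $V_\g(\lambda)$, so it is either $0$ or everything.
\begin{itemize}
\item Suppose it is everything. Then $v_\lambda$ lies in $R\,U(\n_-)v_\lambda$.
\item Using the $\ad$-stability of $R$, rewrite $R\,U(\n_-)\subset U(\n_-)R$. The weight-$\lambda$ component of $U(\n_-)Rv_\lambda$ comes only from the degree-zero part of $U(\n_-)$, and from weight vectors $r$ with $rv_\lambda$ of weight $\lambda$, i.e.\ $r\in R_0$.
\item Hence $v_\lambda\in R_0v_\lambda=\{p_r(\lambda)v_\lambda\}=0$, a contradiction.
\end{itemize}
Therefore $R\,V_\g(\lambda)=0$, and $L_{-3}(\lambda)$ is a $\widetilde L_{-3}(\g)$-module.

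The main subtlety is the rewriting $R\,U(\n_-)\subset U(\n_-)R$ together with the weight bookkeeping that isolates $R_0$. The rewriting holds because $yr=ry+[y,r]$ with $[y,r]\in R$, by induction on the degree. Everything else is formal from Zhu's theorem.
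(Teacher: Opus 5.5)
Your proof is correct and follows the standard argument that the paper invokes by citing Zhu's correspondence and \cite{Ad,AM}: reduce annihilation of $V_\g(\lambda)$ by the two-sided ideal $U(\g)R=RU(\g)$ to $R\,v_\lambda=0$, then reduce that to $R_0v_\lambda=0$ by weight bookkeeping. Your use of the irreducibility dichotomy with $R\,U(\n_-)\subset U(\n_-)R$ is only a cosmetic variant of the usual step, in which one raises a nonzero $rv_\lambda$ back to $v_\lambda$ with $U(\n_+)$ and commutes to land in $R_0$.
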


\begin{proposition}\label{prop:zhu-computation}
One has
\begin{equation}
 \dim R_0=26,\qquad
 \dim\{p_r:r\in R_0\}=20.
\label{eq:3:harish-chandra-dimensions}
\end{equation}
A basis $p_1,\ldots,p_{20}$ is listed in
Appendix~\ref{app:polynomials}.  Its common zero set consists of
\begin{align*}
&0,\quad \omega_4,\quad -2\omega_1,\quad
 -\tfrac32\omega_1,\quad
 -\tfrac52\omega_1+\omega_3,\quad
 -3\omega_1+\omega_4,\\
&-\tfrac12\omega_1-\tfrac12\omega_2,\quad
 -\omega_2,\quad -\tfrac12\omega_2,\quad
 -\tfrac32\omega_2+\omega_3,\\
&-2\omega_3+\omega_4,\quad -3\omega_4,\quad
 \tfrac12\omega_1-\tfrac32\omega_2+\omega_3,\quad
 \omega_1-2\omega_2+\omega_4.
\end{align*}
\end{proposition}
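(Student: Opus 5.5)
We will prove Proposition~\ref{prop:zhu-computation} by an explicit, computer-assisted computation organized around the structure of $R=U(\g)_{\ad}v'$. By Theorem~\ref{thm:singular}, $v$ is $\widehat\g$-singular of $\g$-weight $\omega_2$ and conformal weight $3$. Hence its Zhu image $v'$, obtained from \eqref{eq:3:zhu-image}, is an $\ad$-highest weight vector of weight $\omega_2$ in the filtered piece $U(\g)_{\le 3}$. It is nonzero, because its top symbol is $v''\neq0$ in $S^3(\g)$. Therefore $R\cong V_\g(\omega_2)$ as an $\ad\g$-module. The Weyl dimension formula gives $\dim V_\g(\omega_2)=1274$. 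Since $\omega_2$ is the highest root of the adjoint-like series and the long roots form a $D_4$ inside the $24$ long roots, we expect the zero-weight multiplicity of $V(\omega_2)$ to be $26$; we will confirm this with Freudenthal's formula. This gives $\dim R_0=26$.

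Next we produce a spanning set of $R_0$. Write $\omega_2$ as a sum of simple roots, $\omega_2=3\alpha_1+6\alpha_2+4\alpha_3+2\alpha_4$ in the chosen basis. Then $R_0$ is spanned by the vectors $\ad(f_{i_1})\cdots\ad(f_{i_{15}})v'$ over the sequences of simple lowering operators with the correct multiset of indices. In practice we generate these words level by level, taking $\mathfrak h$-weight spaces of $R$. At each weight we keep only a basis, found by row reduction of the PBW coordinates in a fixed ordered basis of $U(\g)$, until we reach the $26$ independent zero-weight elements.

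For each basis element $r$ we rewrite it in PBW order $\n_-\,\h\,\n_+$ and discard every term containing a factor from $\n_-$ on the left or from $\n_+$ on the right. What remains is $p_r=\HC(r)$, a polynomial of degree at most $3$ on $\h^*$. We then compute the rank of the span of the $26$ polynomials and obtain the value $20$. The $6$-dimensional kernel of $\HC|_{R_0}$ is consistent with the general picture, in which the kernel comes from elements lying in $U(\g)\n_+ + \n_-U(\g)$. We extract a basis $p_1,\dots,p_{20}$ and record it in Appendix~\ref{app:polynomials}.

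The zero set is then determined by solving $p_1=\dots=p_{20}=0$ in coordinates $\lambda=\sum x_i\omega_i$. We compute a Gröbner basis, in lex order, of the ideal they generate. We expect this ideal to be zero-dimensional and radical, with $14$ points, and we read these points off from the triangular form. As a sanity check we verify directly that each of the $14$ listed weights annihilates all $p_i$. We also check that the set is stable under the expected symmetries: the dot-action constraints coming from the Weyl group and the $\omega_4$ spectral flow-like pairing. The main obstacle is the Harish-Chandra computation itself. It requires normal-ordering degree-three elements of $U(\g)$ in the $52$-dimensional algebra with the exact normalization of \cite{Perse}. Our plan is to implement $F_4$ structure constants via a Chevalley basis, check them against the Jacobi identity, and cross-check $p_r$ by evaluating $r$ on highest-weight vectors of small Verma modules.
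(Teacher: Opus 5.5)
Your plan is the paper's proof. The paper also obtains both dimensions, the basis $p_1,\ldots,p_{20}$, and the fourteen points by direct machine computation, followed by a lexicographic Gr\"obner basis (Appendix~B). Deriving $\dim R_0=26$ structurally from $R\cong V_\g(\omega_2)$ is a small improvement. Your heuristic for the number $26$ is off, though: $\omega_2$ is not the highest root, since $\theta=\omega_1$ is. A clean count is $\wedge^2\g\cong\g\oplus V_\g(\omega_2)$. The zero-weight space of $\wedge^2\g$ has dimension $\binom42+24=30$, so the zero-weight multiplicity of $V_\g(\omega_2)$ is $30-4=26$.

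Two concrete corrections.

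\textbf{The root expansion of $\omega_2$.} The correct expansion is
$\omega_2=3\alpha_1+6\alpha_2+8\alpha_3+4\alpha_4$, of height $21$. The coefficients $(3,6,4,2)$ you wrote are the coordinates with respect to the coroots. Under $\h\cong\h^*$ one has $\alpha_3^\vee=2\alpha_3$ and $\alpha_4^\vee=2\alpha_4$. Words of length $15$ with your multiset land at $\epsilon_1-\epsilon_2-\epsilon_3+3\epsilon_4$. Its norm is $12>(\omega_2|\omega_2)=6$, so it is not a weight of $V_\g(\omega_2)$ and all those vectors vanish. You need words of length $21$; your level-by-level procedure would catch this.

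\textbf{The symmetry checks.} Drop these ``sanity checks'', because no such symmetry exists. The zero set is not stable under the dot action of the finite Weyl group. For example, $s_1\cdot 0=-\alpha_1=-2\omega_1+\omega_2$ is not among the fourteen weights. Also, the extended Dynkin diagram of $F_4$ has no nontrivial automorphism, so there is no spectral-flow pairing. The right verification is direct substitution of the fourteen weights together with zero-dimensionality of the Gr\"obner basis. Radicality of the ideal is not needed for the zero set.
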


\begin{proof}
A direct calculation in Mathematica gives \eqref{eq:3:harish-chandra-dimensions} and the basis
listed in Appendix~\ref{app:polynomials}. The common zero set
is obtained from the Gr\"obner basis in Appendix~\ref{app:groebner}.
\end{proof}

\begin{theorem}\label{thm:maximal}
The ideal $J$ is the maximal proper ideal of $V^{-3}(\g)$.
Consequently,
$$
 \widetilde L_{-3}(\g)=L_{-3}(\g),
$$
and the fourteen weights of Proposition~\ref{prop:zhu-computation}
classify the irreducible $L_{-3}(\g)$-modules in category
$\mathcal O$.
\end{theorem}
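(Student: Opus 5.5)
Since $L_{-3}(\g)$ is the unique simple quotient of $V^{-3}(\g)$, the plan is to show that $\widetilde L_{-3}(\g)=V^{-3}(\g)/J$ is already simple. Suppose $J$ is not maximal. Then the maximal ideal $I\supsetneq J$ has nonzero image $I/J\subset\widetilde L_{-3}(\g)$. This is a proper ideal of $\widetilde L_{-3}(\g)$ that is graded by $L_0$ with lower-bounded conformal weights. Take a vector of minimal conformal weight $N>0$ in $I/J$ that is a highest-weight vector for $\g$ and is annihilated by $\widehat{\g}_{>0}$. Such a vector exists because the top component of $I/J$ is a finite-dimensional $\g$-module with integral dominant weights. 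It is a $\widehat{\g}$-singular vector of some weight $\mu$, dominant integral, with $\Delta(\mu)=N\in\mathbb Z_{>0}$. Its irreducible quotient $L_{-3}(\mu)$ is then a subquotient of $\widetilde L_{-3}(\g)$ viewed as a module over itself, so it is a $\widetilde L_{-3}(\g)$-module. By Lemma~\ref{lem:zhu-criterion} (equivalently, since the top level is a module for $A(\widetilde L_{-3}(\g))=U(\g)/\langle v'\rangle$), $\mu$ must be one of the fourteen weights of Proposition~\ref{prop:zhu-computation}.

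Next I would run through the list. The dominant integral weights among the fourteen are only $0$ and $\omega_4$; all the others have a negative or half-integral coefficient. Here $\mu=0$ gives $\Delta=0\neq N$. For $\mu=\omega_4$ we use $k+h^\vee=6$ and $(\omega_4\mid\omega_4+2\rho)=1+2\cdot 8=18$, since $(\rho\mid\epsilon_1)=11/2+\dots$. Concretely, $\rho=\tfrac12(11,5,3,1)$, so $(\omega_4\mid\omega_4+2\rho)=1+11=12$ and $\Delta(\omega_4)=1$. A singular vector of conformal weight $1$ in $V^{-3}(\g)$ would have to lie in $\g(-1)\vac\cong\g$, which is the adjoint module of highest weight $\omega_1\neq\omega_4$. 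Weight $\omega_4$ (the $26$-dimensional module) does not occur there, so this case is impossible. I also need the subspace of weight $1$ to avoid collapsing modulo $J$; it does, since $J$ starts in conformal weight $3$. Hence no such $\mu$ exists, so $I=J$ and $\widetilde L_{-3}(\g)=L_{-3}(\g)$.

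For the classification, Zhu's correspondence gives a bijection between irreducible $L_{-3}(\g)$-modules in category $\mathcal O$ and irreducible highest-weight modules of $A(L_{-3}(\g))=U(\g)/\langle v'\rangle$ in category $\mathcal O$ for $\g$. By Lemma~\ref{lem:zhu-criterion}, these correspond exactly to the common zeros of $p_1,\dots,p_{20}$, which are the fourteen weights. The step I expect to need the most care is the justification that a top-level vector of $I/J$ yields an $L_{-3}(\mu)$ annihilated by $J$, together with the list being complete. The latter rests on the Gröbner-basis computation of Proposition~\ref{prop:zhu-computation}, which I take as given.
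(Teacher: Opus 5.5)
Your proposal is correct and follows essentially the same route as the paper: a lowest-weight $\g$-highest-weight vector of a putative larger ideal must have weight in the Zhu-algebra zero set, so $\mu\in\{0,\omega_4\}$, and both are excluded by conformal weight and by $\widetilde L_{-3}(\g)_1\cong V_\g(\omega_1)$. Delete the abandoned computation $1+2\cdot 8=18$; with $2\rho=(11,5,3,1)$ the correct value is $(\omega_4\mid\omega_4+2\rho)=12$, which gives $\Delta(\omega_4)=1$ as you then state.
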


\begin{proof}
Suppose that $\widetilde L_{-3}(\g)$ has a nonzero proper graded
ideal $K$, and let $N>0$ be minimal with $K_N\neq0$.
Then $x(n)K_N=0$ for $x\in\g$ and $n>0$, and $K_N$
is an $A(\widetilde L_{-3}(\g))$-module. Let $u\in K_N$ be
a highest-weight vector of an irreducible $\g$-summand, with
highest weight $\mu$. Then $\mu$ is dominant integral and
$p_r(\mu)=0$ for every $r\in R_0$. Proposition~\ref{prop:zhu-computation}
gives $\mu=0$ or $\mu=\omega_4$. The Sugawara formula shows
that $L_0$ acts on $u$ as the quadratic Casimir divided by
$2(-3+h^\vee)$. Hence
\begin{equation}
 N=\frac{(\mu\,|\,\mu+2\rho)}{2(-3+h^\vee)},\qquad h^\vee=9.
\label{eq:3:ideal-lowest-weight}
\end{equation}
For $\mu=0$, \eqref{eq:3:ideal-lowest-weight} gives $N=0$.  For
$\mu=\omega_4=\epsilon_1$, it gives $N=1$, whereas
$\widetilde L_{-3}(\g)_1=\g\cong V_\g(\omega_1)$ contains no
copy of $V_\g(\omega_4)$.  Both alternatives are impossible.
\end{proof}

\begin{corollary}\label{cor:ordinary}
The set 
$
 \{ L_{-3}(\g), L_{-3}(\omega_4) \}$ provides a complete list of irreducible ordinary   $L_{-3}(\g)$-modules.
\end{corollary}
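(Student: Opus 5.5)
The plan is to combine Theorem~\ref{thm:maximal} with the fact that an irreducible ordinary module is, in particular, an irreducible highest-weight module in category $\mathcal O$ whose highest weight is dominant integral.

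\emph{Reduction to category $\mathcal O$.} Let $M$ be an irreducible ordinary $L_{-3}(\g)$-module. Its lowest $L_0$-eigenspace $M_{\mathrm{top}}$ is finite-dimensional and annihilated by all positive modes. Hence it is a module for $A(L_{-3}(\g))$, and by irreducibility of $M$ it is an irreducible finite-dimensional $\g$-module $V_\g(\lambda)$ with $\lambda$ dominant integral. A highest-weight vector of $V_\g(\lambda)$ is then a $\widehat{\g}$-singular vector that generates $M$. So $M$ is an irreducible highest-weight module, $M\cong L_{-3}(\lambda)$, and it lies in category $\mathcal O$.

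\emph{Selecting the dominant integral weights.} By Theorem~\ref{thm:maximal}, $\lambda$ must be one of the fourteen weights of Proposition~\ref{prop:zhu-computation}. Writing each in the basis of fundamental weights, a weight is dominant integral exactly when all its coefficients are nonnegative integers. Inspecting the list, every weight other than $0$ and $\omega_4$ has some negative coefficient: $-2\omega_1$, $-\tfrac32\omega_1$, $-\tfrac52\omega_1+\omega_3$, and so on, down to $\omega_1-2\omega_2+\omega_4$, which has coefficient $-2$ on $\omega_2$. This leaves only $\lambda\in\{0,\omega_4\}$.

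\emph{Converse.} Conversely, $L_{-3}(0)=L_{-3}(\g)$ is ordinary. For $L_{-3}(\omega_4)$, Theorem~\ref{thm:maximal} shows it is an $L_{-3}(\g)$-module. It is ordinary because its top space $V_\g(\omega_4)$ is finite-dimensional and each $L_0$-eigenspace is then a finite sum of weight spaces of $U(\widehat{\n}_-)V_\g(\omega_4)$ at bounded degree, hence finite-dimensional. Here one uses that $L_0$ differs from the degree operator by the constant $\Delta(\omega_4)=(\omega_4|\omega_4+2\rho)/12=1$, which is well defined since $k+h^\vee=6\neq0$.

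\emph{Main obstacle.} The only genuinely nontrivial input is Theorem~\ref{thm:maximal}, specifically the completeness of the zero set in Proposition~\ref{prop:zhu-computation}, which is taken as given. The remaining step is just the bookkeeping check of the coefficients above.
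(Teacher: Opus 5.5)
Your proposal is correct and takes the same route as the paper: an irreducible ordinary module is $L_{-3}(\lambda)$ with $\lambda$ dominant integral, and among the fourteen weights of Proposition~\ref{prop:zhu-computation} classified by Theorem~\ref{thm:maximal}, only $0$ and $\omega_4$ are dominant integral. Your explicit check that $L_{-3}(\omega_4)$ is genuinely ordinary, using $\Delta(\omega_4)=1$, supplies a detail the paper leaves implicit.
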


\section{The associated variety of $L_{-3}(\fF)$}\label{sec:ass}

We first prove that $L_{-3}(\g)$ is quasi-lisse.
We then use Drinfeld--Sokolov reduction and the conformal embedding
$L_{-3}(\g)\hookrightarrow L_{-3}(\eE)$ to show that the associated variety 
$X_{L_{-3}(\g)}$ is $\overline{\OO}_{\widetilde A_1}$.

\subsection{\texorpdfstring{$L_{-3}(\g)$}{L(-3,g)} is quasi-lisse}
Let
$$
 W=U(\g)v''\cong V_\g(\omega_2),\qquad
 I_W=S(\g)W.
$$
Here $\g$ acts on $S(\g)$ by the adjoint action. The image of $J$
in $R_{V^{-3}(\g)}=S(\g)$ is $I_W$. Theorem~\ref{thm:maximal}
therefore gives
\begin{equation}
 R_{L_{-3}(\g)}\cong S(\g)/I_W.
\label{eq:4:c2-presentation}
\end{equation}

\begin{proposition}\label{prop:cartan-certificate}
Let $\Psi:S(\g)\to S(\mathfrak h)$ be the algebra homomorphism
defined by $\Psi(h)=h$ for $h\in\mathfrak h$ and
$\Psi(e_\alpha)=\Psi(f_\alpha)=0$ for every positive root $\alpha$.
Then
\begin{equation}
 \Psi(W_0)=S^3(\mathfrak h),\qquad W_0=W^{\mathfrak h}.
\label{eq:4:cartan-image}
\end{equation}
\end{proposition}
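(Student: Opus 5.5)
We need to show that $\Psi$ maps the zero-weight space of $W\cong V_\g(\omega_2)$ onto all of $S^3(\h)$. First, $\Psi$ restricted to $S(\g)^{\h}$ is the classical (commutative) Harish--Chandra-type projection, and it is $W$-equivariant. Indeed, it is the restriction to $\h$ of polynomial functions on $\g$, via $S(\g)\cong\C[\g^*]\cong\C[\g]$ and $\h^*\cong\h$. So $\Psi$ intertwines the Weyl group action on $W_0$ (through $N(\h)$) with the natural action on $S^3(\h)$, and it suffices to understand $\Psi(W_0)$ as a $W$-submodule of $S^3(\h)$.

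Dimension count: $\dim S^3(\h)=\binom{6}{3}=20$ for $\operatorname{rank}\g=4$. The zero-weight multiplicity of $V_\g(\omega_2)$ (dimension $1274$) is $26$, matching $\dim R_0=26$ in Proposition~\ref{prop:zhu-computation}. Thus the claim is that $\Psi|_{W_0}$ has rank $20$ with a $6$-dimensional kernel.

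The plan is to reduce to the leading-term statement of Proposition~\ref{prop:zhu-computation}. The zero-weight elements of $R=U(\g)_{\ad}v'$ are the images of $W_0$ under the symmetrization/Zhu map, since $v'$ has symbol $v''$ up to sign and the ordering of \eqref{eq:3:zhu-image}. For $r\in R_0$ with symbol $w\in W_0$ (top degree $3$), the top-degree part of $\HC(r)$ equals $\Psi(w)$. This holds because the unshifted Harish--Chandra projection kills $\n_-U+U\n_+$, and in the associated graded this becomes the map killing all $e_\alpha,f_\alpha$.

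Hence it suffices to show that the top-degree components of $p_1,\dots,p_{20}$ from Appendix~\ref{app:polynomials} span the $20$-dimensional space $S^3(\h)$. Equivalently, the $20\times20$ coefficient matrix of their cubic parts in a monomial basis of $S^3(\h)$ must be nonsingular. There is one subtlety: the degree-$3$ parts of the $p_r$ may fail to be independent even if the $p_r$ themselves are. So I would recompute directly rather than infer this from the count $20$.

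As a cleaner alternative, I would compute $\Psi$ on a spanning set of $W_0$ directly. Apply products of $f_{\alpha_i}$ (in adjoint action) to $v''$ to reach weight zero, giving $26$ vectors. Then set all root vectors to zero, keeping only the $h\,h\,h$ monomials that arise. Reduce using the $W$-equivariance: decompose $S^3(\h)$ under the Weyl group $W(F_4)$ into irreducibles, which include the invariant-free part (there is no cubic invariant for $F_4$). Then check that $\Psi(W_0)$ meets each isotypic component with full multiplicity. Because $\Psi(W_0)$ is $W$-stable, it is enough to exhibit, for each isotypic component, enough vectors whose projections are independent, and a few explicit vectors typically suffice.

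The main obstacle is this explicit computation: tracking the $h$-terms produced by repeated adjoint actions on the $20$-term expression $v''$. Only the final $2e_\eta e_\theta h_{\epsilon_2-\epsilon_3}$ term and the terms pairing $e_\alpha$ with $f_\alpha$ contribute, and I would carry this out by computer algebra as the paper does elsewhere.
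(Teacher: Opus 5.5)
Your main route is the paper's argument. You use that the symbol map $\sigma_3$ sends $R_0$ onto $W_0$, that in PBW-ordered form the cubic part of $\HC(r_i)=p_i$ equals $\Psi(\sigma_3(r_i))$, and that the $20\times20$ coefficient matrix of the cubic parts $p_1^{(3)},\dots,p_{20}^{(3)}$ in the monomial basis of $S^3(\h)$ is nonsingular. You correctly insist on checking that determinant directly rather than inferring it from the count $20$, and the paper finds it equals $-2^{41}3^{10}5^7$. Two minor imprecisions: $\sigma_3(v')=v''$ holds exactly, with no sign, and $R_0$ is not literally a symmetrization of $W_0$; what you actually need, and use, is $\sigma_3(R_0)=W_0$.
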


\begin{proof}
For $d\geq0$, let $U_{\leq d}(\g)$ be the subspace of $U(\g)$
spanned by products of at most $d$ elements of $\g$. Identify
$U_{\leq3}(\g)/U_{\leq2}(\g)$ with $S^3(\g)$ by sending the
class of $abc$ to the commutative product $abc$ for $a,b,c\in\g$.
Denote the resulting map by
$$
 \sigma_3:U_{\leq3}(\g)\longrightarrow S^3(\g).
$$
It commutes with the adjoint action of $\g$. Equations \eqref{eq:3:singular-vector} and
\eqref{eq:3:zhu-image} give
$$
 \sigma_3(v')=v'',\qquad \sigma_3(R_0)=W_0.
$$

Choose $r_i\in R_0$ with $\HC(r_i)=p_i$, where
$p_1,\ldots,p_{20}$ are the polynomials in
Appendix~\ref{app:polynomials}. Put $w_i=\sigma_3(r_i)$, and let
$p_i^{(3)}$ be the homogeneous component of degree three of $p_i$.
Write $r_i$ as a linear combination of products in which
the factors from $\mathfrak n_-$ precede those from
$\mathfrak h$, and the factors from $\mathfrak h$ precede
those from $\mathfrak n_+$.
The Harish--Chandra projection removes the products
containing a factor from $\mathfrak n_-$ or $\mathfrak n_+$.
After applying $\sigma_3$, the map $\Psi$ removes
the same products. Hence
\begin{equation}
 p_i^{(3)}=\Psi(w_i).
\label{eq:4:cartan-leading-symbols}
\end{equation}
With respect to the lexicographically ordered monomial basis of
$S^3(\mathfrak h)$, with $h_1>h_2>h_3>h_4$, the coefficient
matrix of $p_1^{(3)},\ldots,p_{20}^{(3)}$ has determinant
$-2^{41}3^{10}5^7\neq0$. Therefore, these polynomials form a basis of $S^3(\mathfrak h)$. Since $w_i\in W_0\subset S^3(\g)$, equation \eqref{eq:4:cartan-leading-symbols} proves the assertion.
\end{proof}

\begin{theorem}\label{thm:quasilisse} We have:
$$
 X_{L_{-3}(\g)}\subset\mathcal N_\g.
$$
In particular, $L_{-3}(\g)$ is quasi-lisse.
\end{theorem}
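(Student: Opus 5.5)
The argument is the standard one of Arakawa--Moreau: the associated variety is conical, $G$-invariant and closed, and we show that it meets $\mathfrak h$ only at $0$. Identify $\g\cong\g^*$ through $(\cdot\mid\cdot)$, so that $X_{L_{-3}(\g)}$ is the zero locus in $\g$ of the ideal $I_W$. This uses \eqref{eq:4:c2-presentation}. Since $I_W$ is homogeneous and $\operatorname{ad}$-stable, $X:=X_{L_{-3}(\g)}$ is a closed conical $G$-stable subvariety of $\g$.

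The first step is to show $X\cap\h=\{0\}$. Restricting to $\h$ amounts to setting all $e_\alpha,f_\alpha$ equal to zero, which is exactly the map $\Psi$. For $x\in X\cap\h$ and $w\in W_0$ we get $\Psi(w)(x)=w(x)=0$. By Proposition~\ref{prop:cartan-certificate}, $\Psi(W_0)=S^3(\h)$, so every cubic polynomial on $\h$ vanishes at $x$. In particular $h(x)^3=0$ for every linear form $h$, hence $x=0$.

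The second step reduces to the Cartan subalgebra. Take $x\in X$ with Jordan decomposition $x=x_s+x_n$. The orbit closure $\overline{G\cdot x}$ contains $x_s$. To see this, choose an $\mathfrak{sl}_2$-triple in the centralizer $\g^{x_s}$ containing $x_n$ and use the associated one-parameter torus to contract $x_n$ to $0$ while fixing $x_s$. Since $X$ is closed and $G$-stable, $x_s\in X$. Conjugating $x_s$ into $\h$ and applying the first step gives $x_s=0$. Thus $x$ is nilpotent, and so $X\subset\mathcal N_\g$.

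Quasi-lissness follows because $\mathcal N_\g$ is a finite union of nilpotent orbits, which are the symplectic leaves. Any $G$-stable closed subvariety of $\mathcal N_\g$ is a finite union of such orbits, and its symplectic leaves, being leaves of the Poisson subvariety, are among these orbits. I expect no real obstacle beyond Proposition~\ref{prop:cartan-certificate}, which has already supplied the computational input. The only points needing care are the compatibility of $\Psi$ with restriction to $\h$ under the identification $\g\cong\g^*$, and the standard fact that $x_s\in\overline{G\cdot x}$.
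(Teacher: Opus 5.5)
Your proposal is correct and follows essentially the same argument as the paper: you use the Jordan decomposition and $x_s\in\overline{G\cdot x}$ to reduce to a semisimple element of $\h$. You then force $x_s=0$ from the facts that $\Psi$ is restriction to $\h$ and $\Psi(W_0)=S^3(\h)$. The only difference is that you spell out the torus contraction giving $x_s\in\overline{G\cdot x}$ and the identification of the symplectic leaves with nilpotent orbits, both of which the paper takes as standard.
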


\begin{proof}
Put $X=X_{L_{-3}(\g)}$, and let $y=y_s+y_n\in X$ be the
Jordan decomposition. Since $y_s\in\overline{G\cdot y}$ and $X$
is closed and $G$-stable, we have $y_s\in X$. After conjugation,
we may assume that $y_s\in\mathfrak h$.

Under the identification $\g\cong\g^*$ given by the invariant
form, $\Psi$ is restriction to $\mathfrak h$. Equations
\eqref{eq:4:c2-presentation}--\eqref{eq:4:cartan-image} imply that every element of $S^3(\mathfrak h)$
vanishes at $y_s$. Thus $y_s=0$, and $X\subset\mathcal N_\g$.
The variety $X$ is a union of finitely many nilpotent orbits,
which are its symplectic leaves. Hence $L_{-3}(\g)$ is quasi-lisse.
\end{proof}

\subsection{Drinfeld--Sokolov reduction and the associated variety}

Let $f=f_{\epsilon_1}\in\OO_{\widetilde A_1}$, and put
$$
 e=e_{\epsilon_1},\qquad x=\tfrac12h_{\epsilon_1}.
$$
Then
$$
 \g^\natural=\mathfrak z_\g(e,2x,f)\cong\mathfrak{sl}_4.
$$
As a $\g^\natural\times\mathfrak{sl}_2$-module,
$$
 \g\cong(\mathbf{15},\mathbf1)\oplus(\mathbf6,\mathbf3)
 \oplus(\mathbf1,\mathbf3)\oplus(\mathbf4,\mathbf2)
 \oplus(\mathbf4^*,\mathbf2).
$$
Here $\mathbf4$ is the natural $\mathfrak{sl}_4$-module, $\mathbf{4}^*$ is its dual, 
$\mathbf6=\bigwedge^2\mathbf4$, and $\mathbf{15}$ is the adjoint
module. The second entry denotes the irreducible
$\mathfrak{sl}_2$-module of the indicated dimension.

The strong generators of $\mathcal W^{-3}(\g,f)$ can be chosen
to span the following $\g^\natural$-modules:
\begin{equation}
\begin{array}{c|c}
\text{conformal weight}&\g^\natural\text{-module}\\
\hline
1&\mathbf{15}\\
3/2&\mathbf4\oplus\mathbf4^*\\
2&\mathbf6\oplus\mathbf1.
\end{array}
\label{eq:4:w-strong-generators}
\end{equation}
See \cite[Table 5, orbit $2$]{WExtensions}.
The generator spanning the trivial module may be chosen to be
the Virasoro vector.
For $a\in\g^\natural$, let
$J^{\{a\}}\in\mathcal W^{-3}(\g,f)_1$
denote the generator corresponding to $a$.

\begin{proposition}\label{prop:brst-ideal}
Let $\mathcal J_v$ be the image of $H^0_{DS,f}(J)$ in
$\mathcal W^{-3}(\g,f)$. Then $\mathcal J_v$ contains
$J^{\{a\}}$ for every $a\in\g^\natural$ and the strong generators
spanning $\mathbf4\oplus\mathbf4^*\oplus\mathbf6$ in \eqref{eq:4:w-strong-generators}.
Consequently,
\begin{equation}
 H^0_{DS,f}(L_{-3}(\g))
 \quad\text{is a quotient of}\quad V^{\mathrm{Vir}}(0,0).
\label{eq:4:virasoro-quotient}
\end{equation}
\end{proposition}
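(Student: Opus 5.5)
The plan is to use exactness of $H^0_{DS,f}$ on category $\mathcal O$ (from \cite{ArakawaDS}). It gives the short exact sequence
$$
0\to H^0_{DS,f}(J)\to \mathcal W^{-3}(\g,f)\to H^0_{DS,f}(L_{-3}(\g))\to0 .
$$
So $\mathcal J_v$ is an ideal of $\mathcal W^{-3}(\g,f)$, and $H^0_{DS,f}(L_{-3}(\g))\cong\mathcal W^{-3}(\g,f)/\mathcal J_v$. Since $J$ is generated by the singular vector $v$, the ideal $\mathcal J_v$ is generated, as a $\mathcal W$-ideal, by the classes of $v$ and of its $\widehat\g$-descendants. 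I would keep track of this via conformal weights.

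With respect to the new grading $L_0-x(0)$, the lowest part of $J$ is the $\g$-module $U(\g)v\cong V_\g(\omega_2)$ at conformal weight $3$. I decompose $V_\g(\omega_2)$ (dimension $1274$) under $\g^\natural\times\mathfrak{sl}_2$, and look for vectors whose $x$-eigenvalue is $j$ and whose image lies in weight $3-j$. The lowest weight $3-j=1$ requires $x$-eigenvalue $2$, the top of the $\mathfrak{sl}_2$-strings. I expect $V_\g(\omega_2)$ to contain a component $(\mathbf{15},\mathbf5)$: $\omega_2$ is the highest weight of $\wedge^2\g$ minus trivial pieces, and $\wedge^2\g$ contains $\wedge^2(\mathbf6,\mathbf3)\supset(\mathbf{15},\mathbf5)$. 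Its top vector gives a weight-$1$ element of $\mathcal J_v$ transforming as $\mathbf{15}$.

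To prove this element is nonzero, I would compute its leading term in the classical limit. Via \cite{ArakawaDS}, the associated graded of $H^0_{DS,f}(V^{-3}(\g))$ is $\C[\mathcal S_f]$, and the symbol of the class of a vector $u\in J$ is the restriction of its symbol to the Slodowy slice. Restricting $W$ to $\mathcal S_f=f+\g^e$ and taking the linear part in the $\mathbf{15}$-coordinates of $\g^e$, I expect a nonzero multiple of those coordinates. This uses that $v''$ contains terms like $e_\eta e_\theta h$ and $f e_\theta^2$, which after substituting $f$ contribute linear terms. Hence $J^{\{a\}}+(\text{lower})\in\mathcal J_v$. As there is nothing of lower weight except multiples of $\vac$, and $\vac\notin\mathcal J_v$ (the ideal is proper), this gives $J^{\{a\}}\in\mathcal J_v$ up to normalization, for all $a\in\g^\natural$ by irreducibility of $\mathbf{15}$.

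Next I apply OPEs in $\mathcal W^{-3}(\g,f)$. The brackets $J^{\{a\}}_{(0)}G^{\pm}$ of weight-$1$ fields with the weight-$3/2$ generators reproduce $G^\pm$ via the $\mathfrak{sl}_4$ action on $\mathbf4\oplus\mathbf4^*$, which is nonzero; so the $\mathbf4\oplus\mathbf4^*$ generators lie in $\mathcal J_v$. Similarly $J^{\{a\}}_{(0)}$ acts nontrivially on $\mathbf6$, so the $\mathbf6$ generators lie in $\mathcal J_v$ modulo normally ordered products of lower generators, which are already in the ideal. The quotient is therefore strongly generated by the image of the Virasoro vector $L$. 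Because it is a $\mathcal W$-quotient in which $L_{-1}\vac=0$, it is a quotient of $V^{\mathrm{Vir}}(c,0)$ with $c$ the central charge of $\mathcal W^{-3}(\g,f)$, which the paper's notation $V^{\mathrm{Vir}}(0,0)$ takes to be $0$. That $c=0$ follows from the Kac--Roan--Wakimoto central charge formula, a routine check.

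The main obstacle is the nonvanishing of the weight-$1$ class. It needs an explicit restriction of $v''$ (or an $\mathfrak{sl}_4$-covariant descendant) to $\mathcal S_f$. If the classical leading term vanished, I would instead compute $H^0_{DS,f}$ of the top component directly in the BRST complex of \cite{KacRoanWakimoto}, using Mathematica as in Proposition~\ref{prop:zhu-computation}.
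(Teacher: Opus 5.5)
Your route is the paper's. Exactness of $H^0_{DS,f}$ identifies $H^0_{DS,f}(L_{-3}(\g))$ with $\mathcal W^{-3}(\g,f)/\mathcal J_v$; note that \cite{ArakawaDS} gives exactness on $KL_{-3}$, where all modules here lie, not on all of $\mathcal O$. A weight-one class coming from the top of $J$ is shown to be nonzero via $R_{\mathcal W^{-3}(\g,f)}\cong\C[\mathcal S_f]$. Irreducibility of $\mathbf{15}$, the $(0)$-products with $\mathbf4\oplus\mathbf4^*\oplus\mathbf6$, and $c(-3,\widetilde A_1)=0$ then finish.

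There is one omission: you never say why your top vector defines a class in $H^0_{DS,f}(J)$. That vector is $v$ itself. It is killed by $e_{\epsilon_1}$, has $h_{\epsilon_1}$-eigenvalue $(\omega_2|2\epsilon_1)=4$, and its $\g^\natural$-weight $\epsilon_2+\epsilon_3$ is the highest root of $\g^\natural\cong\mathfrak{so}_6$. So no branching of $V_\g(\omega_2)$ is needed. Moreover $v\otimes\vac_{\mathrm{gh}}$ is a cocycle, because $d(m\otimes\vac_{\mathrm{gh}})=\sum_i\sum_{n\ge0}u_i(n)m\otimes(\psi_i^*)_{(-n-1)}\vac_{\mathrm{gh}}$ and $\g_{>0}\subset\n_+$ annihilates the singular vector $v$ in all modes $n\ge0$.

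The substantive gap is the step you flag yourself. Nonvanishing is only expected, and the reason you give would not deliver it. Every coordinate $e_\alpha=(e_\alpha|\cdot)$ with $\alpha(x)>0$ and $\alpha\neq\epsilon_1$ vanishes identically on $\mathcal S_f$. It pairs only with $\g_{-\alpha(x)}$, while $\g^e\subset\g_{\ge0}$ and $(e_\alpha|f)=0$. Since $\theta(x)=\eta(x)=1$, the terms $2e_\eta e_\theta h_{\epsilon_2-\epsilon_3}$ and $-2f_{\epsilon_2-\epsilon_3}e_\theta^2$ you point to restrict to zero.

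A monomial of $v''$ survives only if all its factors of positive $x$-degree equal $e_{\epsilon_1}$. As $\omega_2(x)=2$ and $\omega_2-2\epsilon_1=\beta:=\epsilon_2+\epsilon_3$, the only survivor is $-e_\beta e_{\epsilon_1}^2$. Using $(e_{\epsilon_1}|f)=2$, this gives $v''|_{\mathcal S_f}=-4\,e_\beta|_{\mathcal S_f}$, for example $v''(f+tf_\beta)=-4t$. This is precisely the paper's computation.

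With it, the class of $v$ is a nonzero $\g^\natural$-highest-weight vector of weight $\beta$ in $\mathcal W^{-3}(\g,f)_1\cong\mathbf{15}$. Hence it is a nonzero multiple of $J^{\{e_\beta\}}$; there are no ``lower'' terms to worry about, since conformal weight is a grading. The rest of your argument then goes through.
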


\begin{proof}
Write $\g_j=\{a\in\g:[x,a]=ja\}$ and
$\g_{>0}=\bigoplus_{j>0}\g_j$. For a $V^{-3}(\g)$-module $M$,
let
$$
 C_f^\bullet(M)=M\otimes F^{\mathrm{ne}}\otimes F^{\mathrm{ch}},
 \qquad d_M=Q_{(0)},
$$
be the BRST complex and differential of
\cite[Section~4.1]{ArakawaDS} for $f$ and the grading defined by $x$.
Here $F^{\mathrm{ne}}$ is the neutral free-field vertex algebra
associated with $\g_{1/2}$, and $F^{\mathrm{ch}}$ is the charged
free-field vertex superalgebra associated with
$\g_{>0}\oplus\g_{>0}^*$. The operator $d_M$ maps
$C_f^j(M)$ to $C_f^{j+1}(M)$. Write $\vac_{\mathrm{gh}}$ for
the tensor product of the two vacuum vectors.

Choose a homogeneous basis $(u_i)$ of $\g_{>0}$, and let
$\psi_i^*$ be the charged generators corresponding to its dual
basis. The formula for $Q_{(0)}$ gives
$$
 d_M(m\otimes\vac_{\mathrm{gh}})
 =\sum_i\sum_{n\geq0}
 u_i(n)m\otimes(\psi_i^*)_{(-n-1)}\vac_{\mathrm{gh}}.
$$
The terms of $Q_{(0)}$ acting only on the free-field factors
annihilate $\vac_{\mathrm{gh}}$. In the present grading,
$\g_{>0}\subset\mathfrak n_+$. Since $v$ is singular,
$u_i(n)v=0$ for all $i$ and $n\geq0$. Thus
$v\otimes\vac_{\mathrm{gh}}\in C_f^0(J)$ satisfies
$d_J(v\otimes\vac_{\mathrm{gh}})=0$.

Let $\iota:C_f^\bullet(J)\to C_f^\bullet(V^{-3}(\g))$
be the morphism induced by $J\hookrightarrow V^{-3}(\g)$, and put
$$
 z=H^0(\iota)\bigl([v\otimes\vac_{\mathrm{gh}}]_J\bigr)
 \in\mathcal J_v\subset\mathcal W^{-3}(\g,f),
$$
where $[v\otimes\vac_{\mathrm{gh}}]_J$ denotes the class in
$H^0_{DS,f}(J)=\ker(d_J|_{C_f^0(J)})/d_J(C_f^{-1}(J))$.
The conformal weight of $z$ is $3-\omega_2(x)=1$, and it is a
$\g^\natural$-highest-weight vector of weight
$\beta=\epsilon_2+\epsilon_3$, provided that $z\neq0$.

By \cite[Theorem 4.5.9(i) and Corollary 4.5.10(i)]{ArakawaDS},
$$
 R_{\mathcal W^{-3}(\g,f)}\cong\C[\mathcal S_f],
 \qquad \mathcal S_f=f+\g^e.
$$
Under this identification, the image of $z$ is
$v''|_{\mathcal S_f}$. Since $f_\beta\in\g^\natural\subset\g^e$,
the line $f+t f_\beta$ lies in $\mathcal S_f$. Only the term
$-e_\beta e_{\epsilon_1}^2$ in \eqref{eq:3:singular-symbol} contributes on this line.
Thus
$$
 v''(f+t f_\beta)
 =-(e_{\epsilon_1} |\, f)^2(e_\beta\, |\, t f_\beta)
 =-4t.
$$
It follows that $z\neq0$.

By \eqref{eq:4:w-strong-generators}, the space $\mathcal W^{-3}(\g,f)_1$ is the adjoint
$\g^\natural$-module. Its highest-weight space has weight $\beta$
and is one-dimensional. Therefore
$z=cJ^{\{e_\beta\}}$ for some $c\neq0$. Since $\mathcal J_v$
is stable under $\g^\natural$, we obtain
$$
 J^{\{a\}}\in\mathcal J_v\qquad(a\in\g^\natural).
$$
For the strong generators $W^u$ spanning one of the modules
$U=\mathbf4,\mathbf4^*,\mathbf6$, we have
$$
 J^{\{a\}}_{(0)}W^u=W^{[a,u]}.
$$
Since $[\g^\natural,U]=U$, all these generators belong to
$\mathcal J_v$. The quotient is therefore generated by the
image of the Virasoro vector. Its central charge is zero, since
$$
 c(k,\widetilde A_1)=-\frac{6(k+3)(2k+5)}{k+9}.
$$
Exactness of $H^0_{DS,f}$ on $KL_{-3}$ gives
$$
 H^0_{DS,f}(L_{-3}(\g))
 \cong\mathcal W^{-3}(\g,f)/\mathcal J_v
$$
by \cite[Theorem 4.5.7]{ArakawaDS}. This proves \eqref{eq:4:virasoro-quotient}.
\end{proof}

\begin{theorem}\label{thm:collapse} We have:
$$
 \mathcal W_{-3}(\g,f)\cong\C.
$$
\end{theorem}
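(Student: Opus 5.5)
We deduce the statement from Proposition~\ref{prop:brst-ideal}. By \eqref{eq:4:virasoro-quotient}, $H^0_{DS,f}(L_{-3}(\g))$ is a quotient of the universal Virasoro vertex algebra $V^{\mathrm{Vir}}(0,0)$ of central charge $0$. The simple quotient $\mathcal W_{-3}(\g,f)$ is the simple graded quotient of $\mathcal W^{-3}(\g,f)$. The key observation is that $H^0_{DS,f}(L_{-3}(\g))$ is itself a quotient of $\mathcal W^{-3}(\g,f)$, and it is nonzero if and only if $f\in X_{L_{-3}(\g)}$, by Arakawa's associated-variety result. Whenever it is nonzero, $\mathcal W_{-3}(\g,f)$ is in turn a quotient of it. Hence $\mathcal W_{-3}(\g,f)$ is a simple graded quotient of $V^{\mathrm{Vir}}(0,0)$. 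The unique simple quotient of $V^{\mathrm{Vir}}(0,0)$ is $\C$: at $c=0$ the vector $L_{-2}\vac$ is singular, since $L_2L_{-2}\vac=\tfrac{c}{2}\vac=0$ and $L_1L_{-2}\vac=0$. So every simple quotient of $V^{\mathrm{Vir}}(0,0)$ is the trivial algebra $\C\vac$. This gives $\mathcal W_{-3}(\g,f)\cong\C$ directly, since the simple $W$-algebra is always nonzero (it contains $\vac$).

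Thus the only real point is to make the chain of quotients precise. Let $\mathcal N$ be the maximal ideal of $\mathcal W^{-3}(\g,f)$. We must show $\mathcal J_v\subset\mathcal N$, so that $\mathcal W_{-3}(\g,f)=\mathcal W^{-3}(\g,f)/\mathcal N$ is a quotient of $\mathcal W^{-3}(\g,f)/\mathcal J_v$. If $\mathcal J_v$ is a proper ideal, this holds because $\mathcal N$ is the unique maximal graded ideal. Here we use that $\mathcal W^{-3}(\g,f)$ is $\tfrac12\mathbb Z_{\geq0}$-graded with one-dimensional weight-zero space, so every proper ideal avoids $\vac$.

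Even if $\mathcal J_v$ were everything, the same argument applied to the image of $\vac$ suffices. Note that $\mathcal J_v$ is generated by the weight-one elements $J^{\{a\}}$ and the weight-$3/2$ and weight-$2$ generators from \eqref{eq:4:w-strong-generators}. The quotient $\mathcal W^{-3}(\g,f)/\mathcal J_v$ is then spanned by monomials in modes of the Virasoro field $L(z)$. Quotienting further by the maximal ideal leaves $\C$, or $0$ if $\mathcal J_v$ is not proper. Intersecting with $\mathcal N$ therefore shows that the images of all strong generators other than $\vac$ vanish in $\mathcal W_{-3}(\g,f)$, once we also show $\omega_{\mathcal W}\in\mathcal N$.

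The main obstacle is this last claim, because an ideal generated by fields need not be proper, and we want a conclusion independent of that. We settle it directly: $\omega_{\mathcal W}$ generates a proper ideal since its image is a singular vector at $c=0$. Specifically, $L_1\omega_{\mathcal W}=0$ and $L_2\omega_{\mathcal W}=\tfrac c2\vac=0$, and the positive modes of the other generators map it into lower-weight pieces lying in $\mathcal N$ modulo the generators already killed. Hence $\mathcal W_{-3}(\g,f)$ is spanned by $\vac$, i.e.\ $\mathcal W_{-3}(\g,f)\cong\C$.
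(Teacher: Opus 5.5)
There is a genuine gap: your argument never shows that $\mathcal J_v$ is contained in the maximal ideal $\mathcal N$ of $\mathcal W^{-3}(\g,f)$.

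\textbf{The unproved step.} You pass from ``$H^0_{DS,f}(L_{-3}(\g))$ is a quotient of $V^{\mathrm{Vir}}(0,0)$'' to ``$\mathcal W_{-3}(\g,f)$ is a quotient of it''. This requires $\mathcal J_v\subset\mathcal N$. Equivalently, $\mathcal J_v$ is proper, i.e.\ $H^0_{DS,f}(L_{-3}(\g))\neq0$, i.e.\ $f\in X_{L_{-3}(\g)}$. You prove none of these.

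The remark that $\mathcal W_{-3}(\g,f)\neq0$ does not help. It says nothing about whether the kernel $\mathcal J_v$ of $\mathcal W^{-3}(\g,f)\to H^0_{DS,f}(L_{-3}(\g))$ is proper. Proposition~\ref{prop:brst-ideal} by itself is consistent with $\mathcal J_v=\mathcal W^{-3}(\g,f)$.

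\textbf{The fallback is circular.} What must be shown is that $J^{\{a\}}$ and the $\mathbf4\oplus\mathbf4^*\oplus\mathbf6$ generators lie in $\mathcal N$. Knowing $J^{\{a\}}\in\mathcal J_v$ gives no information about $\mathcal N$ when $\mathcal J_v$ is everything. The generators are ``already killed'' only in $\mathcal W^{-3}(\g,f)/\mathcal J_v$, which may be $0$. Nor is $\omega_{\mathcal W}\in\mathcal N$ easier. Since $J^{\{a\}}$ is primary of weight one, skew-symmetry gives $J^{\{a\}}_{(1)}\omega_{\mathcal W}=L_0J^{\{a\}}=J^{\{a\}}$. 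So $\omega_{\mathcal W}\in\mathcal N$ already forces $J^{\{a\}}\in\mathcal N$. The identities $L_1\omega_{\mathcal W}=L_2\omega_{\mathcal W}=0$ only control Virasoro modes, not the positive modes of the other generators.

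\textbf{How to close it.} You need one extra input; there are two options.

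The first is geometric. Take the inclusion $\overline{\OO}_{\widetilde A_1}\subset X_{L_{-3}(\g)}$ of \eqref{eq:4:orbit-inclusion}. It comes from the conformal embedding into $L_{-3}(E_6)$ and $X_{L_{-3}(E_6)}=\overline{\OO}^{E_6}_{\min}$, and does not depend on the present theorem. Combined with $X_{H^0_{DS,f}(M)}=X_M\cap\mathcal S_f$, it gives $H^0_{DS,f}(L_{-3}(\g))\neq0$. Your first paragraph then becomes a complete proof, by a route different from the paper's.

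The second is the paper's, which works intrinsically in $\mathcal W^{-3}(\g,f)$ and never uses $\mathcal J_v$:
\begin{enumerate}
\item The currents $J^{\{a\}}$ generate a quotient of $V^0(\g^\natural)$, so $J^{\{a\}}_{(1)}J^{\{b\}}=0$.
\item Since $L_1\mathcal W^{-3}(\g,f)_1=0$ and $\mathcal W^{-3}(\g,f)_0=\C\vac$, Li's theorem gives an invariant bilinear form whose radical is $\mathcal N$.
\item Then $\langle J^{\{a\}},J^{\{b\}}\rangle=-\langle\vac,J^{\{a\}}_{(1)}J^{\{b\}}\rangle=0$ puts the whole weight-one space into $\mathcal N$.
\item The relations $J^{\{a\}}_{(0)}W^u=W^{[a,u]}$ then put the remaining non-Virasoro generators into $\mathcal N$.
\end{enumerate}
The level-zero (collapsing) property of $\g^\natural$, or alternatively the input $f\in X_{L_{-3}(\g)}$, is exactly what your proposal is missing.
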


\begin{proof}
The vertex subalgebra of $\mathcal W^{-3}(\g,f)$ generated
by $J^{\{a\}}$, $a\in\g^\natural$, is a quotient of
$V^0(\g^\natural)$, where $\g^\natural\cong\mathfrak{sl}_4$.
In particular,
\[
 J^{\{a\}}_{(1)}J^{\{b\}}=0
 \qquad(a,b\in\g^\natural).
\]
Since $\mathcal W^{-3}(\g,f)_1$ is spanned by the primary
currents $J^{\{a\}}$, we have
$L_1\mathcal W^{-3}(\g,f)_1=0$.
Together with $\mathcal W^{-3}(\g,f)_0=\C\mathbf1$,
this implies that $\mathcal W^{-3}(\g,f)$ admits a unique
invariant bilinear form $\langle\cdot,\cdot\rangle$
normalized by $\langle\mathbf1,\mathbf1\rangle=1$ \cite{Li94}.
By invariance,
\[
 \langle J^{\{a\}},J^{\{b\}}\rangle
 =-\langle\mathbf1,J^{\{a\}}_{(1)}J^{\{b\}}\rangle=0.
\]
Since distinct conformal-weight subspaces are orthogonal,
$\mathcal W^{-3}(\g,f)_1$ lies in the radical of this form,
which is the maximal proper graded ideal.

The relations $J^{\{a\}}_{(0)}W^u=W^{[a,u]}$ then imply that
the generators spanning $\mathbf4\oplus\mathbf4^*\oplus\mathbf6$
also lie in this ideal. Thus $\mathcal W_{-3}(\g,f)$ is a simple
quotient of $V^{\mathrm{Vir}}(0,0)$ and is isomorphic to $\C$.
\end{proof}

\begin{theorem}\label{thm:associated} We have:
\begin{equation}
 X_{L_{-3}(\g)}
 =\overline{\OO}_{\widetilde A_1}
 =\OO_{\widetilde A_1}\sqcup\OO_{A_1}\sqcup\{0\}.
\label{eq:4:associated-variety}
\end{equation}
\end{theorem}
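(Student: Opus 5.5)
We establish two inclusions. For the lower bound $X_{L_{-3}(\g)}\supset\overline{\OO}_{\widetilde A_1}$, use the nonvanishing of the reduction. By Proposition~\ref{prop:brst-ideal}, $H^0_{DS,f}(L_{-3}(\g))$ is a quotient of $V^{\mathrm{Vir}}(0,0)$. It is nonzero: the image of the vacuum is nonzero, because $\mathcal J_v$ is generated in conformal weights $\geq 1$ and so does not contain $\vac$. Alternatively, Theorem~\ref{thm:collapse} shows that the simple quotient $\C$ is reached. By \cite[Theorem 4.5.9 and Corollary 4.5.10]{ArakawaDS}, the associated variety of $H^0_{DS,f}(L_{-3}(\g))$ is $X_{L_{-3}(\g)}\cap\mathcal S_f$. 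Hence nonvanishing forces $f\in X_{L_{-3}(\g)}$. Since $X$ is closed and $G$-stable, this gives $\overline{\OO}_{\widetilde A_1}\subset X$.

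For the upper bound, Theorem~\ref{thm:quasilisse} gives $X\subset\mathcal N_\g$, so $X$ is a finite union of nilpotent orbit closures. Two routes are available, and I would use both as cross-checks.

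The first route uses the conformal embedding $L_{-3}(\g)\hookrightarrow L_{-3}(\eE)$ of \cite{AdamovicPerse}. This embedding induces a Poisson map $R_{L_{-3}(\g)}\to R_{L_{-3}(\eE)}$, and hence a morphism $X_{L_{-3}(\eE)}=\overline{\OO}^{E_6}_{\min}\to X_{L_{-3}(\g)}$. This morphism is the restriction of the projection $p$. By \cite{FJLS23}, its image is $\overline{\OO}_{\widetilde A_1}$. This shows the image lies in $X$, but not the reverse inclusion, so by itself the embedding only recovers the lower bound.

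The second route is the genuine upper bound. We must exclude every nilpotent orbit not contained in $\overline{\OO}_{\widetilde A_1}$. In the closure order of $F_4$, the orbits above $\widetilde A_1$ all contain $A_1+\widetilde A_1$, of dimension $28$, in their closure. Because of this, it suffices to show $f'\notin X$ for $f'\in\OO_{A_1+\widetilde A_1}$. Concretely, one exhibits an element of $I_W$ that does not vanish at $f'$. I would first try evaluating the weight vectors of $W=U(\g)v''$ at a representative $f'=f_{\epsilon_1}+f_{\gamma}$ for a suitable long root $\gamma$. Since $v''$ has many terms that are cubic in $e$'s, pairing against a sum of three negative root vectors, such as $f_{\theta}+f_{\eta}+f_{\cdots}$, may give a nonzero value. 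A cleaner alternative is to apply Drinfeld--Sokolov reduction at $f'$. If $f'\in X$, then $H^0_{DS,f'}(L_{-3}(\g))\neq0$. Its conformal weight considerations could then be contradicted using the image of $v$, exactly as in Proposition~\ref{prop:brst-ideal}: one shows that the image of $v$ kills the vacuum because its reduced weight $3-\omega_2(x')$ is $\leq0$.

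The main obstacle is this upper bound, namely producing an explicit nonvanishing element of $I_W$ at $\OO_{A_1+\widetilde A_1}$. I would rely on a direct computer evaluation of $W$ on the Slodowy slice. Finally, the stratification $\overline{\OO}_{\widetilde A_1}=\OO_{\widetilde A_1}\sqcup\OO_{A_1}\sqcup\{0\}$ is the standard closure order of $F_4$ orbits, and the dimension is $22$.
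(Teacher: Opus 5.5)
Your reduction of the upper bound to the single orbit $\OO_{A_1+\widetilde A_1}$ (via $X\subset\mathcal N_\g$ and the $\fF$ closure order) is correct, and it is a genuinely different route from the paper's. As written, however, the decisive step is missing: you leave the nonvanishing of an element of $I_W$ on $\OO_{A_1+\widetilde A_1}$ as an open ``obstacle'' for a computer.

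It is a one-line check with the representative you suggest, taking $\gamma=\epsilon_2+\epsilon_3$. Then $f'=f_{\epsilon_1}+f_{\epsilon_2+\epsilon_3}$ is a sum of root vectors for strongly orthogonal short and long roots. Its $h'=h_{\epsilon_1}+h_{\epsilon_2+\epsilon_3}$ corresponds to $\omega_2$, with weighted diagram $0100$, so $f'\in\OO_{A_1+\widetilde A_1}$. Only the monomial $-e_{\epsilon_2+\epsilon_3}e_{\epsilon_1}^2$ of \eqref{eq:3:singular-symbol} survives at $f'$, and $v''(f')=-4\neq0$. This is the case $t=1$ of the evaluation $v''(f+tf_\beta)=-4t$ already made in the proof of Proposition~\ref{prop:brst-ideal}. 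Since $v''\in I_W$, we get $f'\notin X$.

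Your DS variant at $f'$ needs this same evaluation. There $3-\omega_2(x')=0$ exactly, and a class of negative weight would simply vanish, so ``$\le0$'' is not the right condition.

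The paper avoids any new evaluation by using a fact you already quote. Since $H^0_{DS,f}(L_{-3}(\g))$ is a quotient of $V^{\mathrm{Vir}}(0,0)$, one has $\dim(X\cap\mathcal S_f)\le1$. An orbit $\OO\not\subset\overline{\OO}_{\widetilde A_1}$ would satisfy $\OO_{\widetilde A_1}\subset\overline{\OO}$ and, by transversality, $\dim(\overline{\OO}\cap\mathcal S_f)=\dim\OO-22\ge6$. Your completed route needs only Theorem~\ref{thm:quasilisse}, the closure order and one evaluation, and it bypasses the $W$-algebra generator table. The paper's route instead reuses Proposition~\ref{prop:brst-ideal}.

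Your DS argument for the lower bound does not work as stated. $\mathcal J_v$ is the full image of $H^0_{DS,f}(J)$, not a priori the ideal generated by $z$. Moreover, an ideal generated in positive conformal weight can contain $\vac$, via weight-zero products such as $a_{(1)}b$ or $W^u_{(2)}W^{u'}$. Theorem~\ref{thm:collapse} is no substitute. The simple quotient $\mathcal W_{-3}(\g,f)$ is nonzero for every nilpotent $f$, whereas $H^0_{DS,f}(L_{-3}(\g))\ne0$ is equivalent to $f\in X$, i.e.\ to the inclusion being proved.

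The paper argues in the opposite direction. It deduces nonvanishing from $\overline{\OO}_{\widetilde A_1}\subset X$, which it gets from the embedding into $L_{-3}(\eE)$ and $X_{L_{-3}(\eE)}=\overline{\OO}^{E_6}_{\min}$. That is your ``first route'', so it should serve as your proof of the lower bound rather than a cross-check.

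If you prefer a DS proof, the missing input is a weight bound on the whole complex. A PBW vector $x_1(-n_1)\cdots x_s(-n_s)u\in J$ with $u\in U(\g)v$ satisfies
\[
L^{\mathrm{sug}}_0-x(0)\ge 3+\textstyle\sum_i n_i-\omega_2(x)-s\ge1,
\]
because $x$ is dominant, $\omega_2(x)=2$, and $\ad x$ has eigenvalues at most $1$. The ghost factors have nonnegative weight. Hence $H^0_{DS,f}(J)$ lies in weights $\ge1$, so $\vac\notin\mathcal J_v$, and exactness gives $H^0_{DS,f}(L_{-3}(\g))\neq0$.
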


\begin{proof}
Recall the Dynkin diagram involution $\sigma$ of $\eE$, with
$\eE^\sigma=\g$. The conformal embedding of \cite{AdamovicPerse}
gives the decomposition
$$
 L_{-3}(\eE)
 =L_{-3}(\g)\oplus L_{-3}(\omega_4)
$$
as an $L_{-3}(\g)$-module. The embedding induces a morphism
$$
 \pi:X_{L_{-3}(\eE)}\longrightarrow X_{L_{-3}(\g)},
$$
given by the projection $a\mapsto(a+\sigma(a))/2$ under the
identifications with subvarieties of $\eE$ and $\g$.
By \cite{ArakawaMoreau},
$$
 X_{L_{-3}(\eE)}=\overline{\OO}_{\min}^{E_6}.
$$
Choose a root vector $a\in\eE$ whose root is not fixed by $\sigma$.
Then $\pi(a)$ is a nonzero short-root vector of $\g$. Since
$a\in\OO_{\min}^{E_6}$, we obtain
\begin{equation}
 \overline{\OO}_{\widetilde A_1}
 \subset X_{L_{-3}(\g)}.
\label{eq:4:orbit-inclusion}
\end{equation}

Proposition~\ref{prop:brst-ideal} implies
$$
 \dim X_{H^0_{DS,f}(L_{-3}(\g))}\leq1.
$$
The inclusion \eqref{eq:4:orbit-inclusion} shows that the reduction is nonzero.
By \cite[Theorem 4.21]{ArakawaDS},
\begin{equation}
 X_{H^0_{DS,f}(L_{-3}(\g))}
 =X_{L_{-3}(\g)}\cap\mathcal S_f.
\label{eq:4:reduction-slice}
\end{equation}
By Theorem~\ref{thm:quasilisse}, $X_{L_{-3}(\g)}$ is a union
of nilpotent orbit closures. Suppose that it contains an orbit
$\OO$ not contained in $\overline{\OO}_{\widetilde A_1}$.
The closure order in type $\fF$ gives
$\OO_{\widetilde A_1}\subset\overline{\OO}$
\cite[Chapter 8]{CollingwoodMcGovern}. Since $\mathcal S_f$
is transverse to $\OO_{\widetilde A_1}$ at $f$,
$$
 \dim(\overline{\OO}\cap\mathcal S_f)
 =\dim\OO-\dim\OO_{\widetilde A_1}\geq2,
$$
contradicting \eqref{eq:4:reduction-slice} and the preceding dimension bound.
Together with \eqref{eq:4:orbit-inclusion}, this proves \eqref{eq:4:associated-variety}. The orbit dimensions are
$\dim\OO_{\widetilde A_1}=22$ and $\dim\OO_{A_1}=16$.
\end{proof}

\section{The modular linear differential equation for $L_{-3}(\g)$ }\label{sec:MLDE}

In this section we compute the Poisson coinvariants of $R_{L_{-3}(\g)}$ and derive a fifth-order modular linear differential equation satisfied by the
characters of all ordinary $L_{-3}(\g)$-modules.
We prove that this order is minimal for the vacuum character among
monic holomorphic $\mathrm{SL}_2(\mathbb Z)$-MLDEs.

\subsection{Poisson coinvariants}\label{sec:poisson}

Put $R=R_{L_{-3}(\g)}$.  Since $R$ is generated by the image of
$\g$,
$$
 \{R,R\}=\{\g,R\}=\g R.
$$
Every homogeneous component is a finite-dimensional semisimple
$\g$-module.  Therefore
$$
 R/\{R,R\}\cong R^\g
$$
as graded vector spaces.

Choose invariant generators
\begin{equation}
 S(\g)^\g=\C[Q,P_6,P_8,P_{12}],\qquad
 \deg(Q,P_6,P_8,P_{12})=(2,6,8,12).
\label{eq:5:invariant-generators}
\end{equation}
For $d\geq3$, define
$$
 \delta_d:S^d(\g)^\g\longrightarrow S^{d-3}(\g),\qquad
 \delta_d(F)=v''(\partial)F,
$$
using the invariant form to identify $\g\cong\g^*$.

\begin{lemma}\label{lem:apolar}
\begin{equation}
 \dim(R^\g)_d=\dim\ker\delta_d.
\label{eq:5:apolar-kernel}
\end{equation}
\end{lemma}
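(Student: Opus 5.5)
We identify $R^\g$ with the $\g$-invariants of $S(\g)/I_W$, using \eqref{eq:4:c2-presentation}. In each degree $d$ the functor of $\g$-invariants is exact on finite-dimensional $\g$-modules, so
$$
 (R^\g)_d\cong S^d(\g)^\g/(I_W)_d^\g .
$$
The idea is to compute $\dim (I_W)_d^\g$ by duality. We use the nondegenerate $G$-invariant apolarity pairing on $S^d(\g)$, induced by the invariant form via $\g\cong\g^*$: for monomials it is $\langle F,H\rangle=F(\partial)H$, evaluated at degree zero. This pairing is symmetric and nondegenerate on each $S^d(\g)$, and it is $\g$-invariant. Under it, multiplication by an element $w\in S^3(\g)$ is adjoint to the differential operator $w(\partial)$:
$$
 \langle wG,F\rangle=\langle G,w(\partial)F\rangle,\qquad G\in S^{d-3}(\g),\ F\in S^d(\g).
$$

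First I would show that $(I_W)_d^\perp=\{F\in S^d(\g): w(\partial)F=0\ \text{for all } w\in W\}$. This is immediate from adjointness, since $(I_W)_d=\sum_{w\in W} wS^{d-3}(\g)$.

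Second, I pass to invariants. The pairing restricted to $S^d(\g)^\g$ is still nondegenerate, because invariants pair trivially with the nontrivial isotypic components. Also $(I_W)_d$ is a $\g$-submodule. Hence
$$
 \dim S^d(\g)^\g-\dim (I_W)_d^\g=\dim\bigl((I_W)_d^\perp\bigr)^\g .
$$
To see this, decompose $S^d(\g)=S^d(\g)^\g\oplus S^d(\g)_{\mathrm{nontriv}}$ orthogonally, and note that $(I_W)_d$ splits compatibly. Then the orthogonal complement of $(I_W)_d^\g$ inside $S^d(\g)^\g$ is $\bigl((I_W)_d^\perp\bigr)^\g$.

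Third, I reduce the condition from all of $W$ to the single vector $v''$. For $F$ invariant, the map $w\mapsto w(\partial)F$ is $\g$-equivariant from $W$ to $S^{d-3}(\g)$. Since $W=U(\g)v''$ is irreducible and generated by $v''$, this map vanishes on $W$ iff it vanishes at $v''$. Explicitly, $(x\cdot w)(\partial)F=x\cdot(w(\partial)F)-w(\partial)(x\cdot F)$, and $x\cdot F=0$, so vanishing at $v''$ propagates to all of $U(\g)v''$. Thus $\bigl((I_W)_d^\perp\bigr)^\g=\ker\delta_d$, which gives \eqref{eq:5:apolar-kernel} for $d\ge 3$. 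For $d<3$, $(I_W)_d=0$ and both sides agree once $\delta_d$ is interpreted as zero, though the lemma is only stated for $d\ge3$.

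The main point requiring care is the second step: the nondegeneracy of apolarity on invariants, and the compatibility of orthogonal complements with taking invariants. This rests on the complete reducibility of $S^d(\g)$ and on the $\g$-invariance of the pairing, which follows from $(\cdot|\cdot)$ being invariant and $\partial$ transforming as the dual representation. Neither requires computation.
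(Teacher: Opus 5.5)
Your proposal is correct and follows the same route as the paper. Both arguments use the apolar orthogonal complement of $WS^{d-3}(\g)$, reduce from all of $W$ to $v''$ via the $\g$-equivariance of $w\mapsto w(\partial)F$ for invariant $F$, and finish with exactness of invariants. The difference is only in detail: you spell out why the pairing stays nondegenerate on $S^d(\g)^\g$ and why orthogonal complements commute with taking invariants, which the paper leaves implicit.
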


\begin{proof}
Under the invariant apolar pairing on $S^d(\g)$, the orthogonal
complement of $WS^{d-3}(\g)$ consists of the elements annihilated
by $w(\partial)$ for all $w\in W$.  For an invariant $F$, the
map $w\mapsto w(\partial)F$ is $\g$-equivariant.  Since $v''$
generates $W$, it vanishes exactly when $\delta_d(F)=0$.
Exactness of invariants in each finite-dimensional homogeneous
component proves \eqref{eq:5:apolar-kernel}.
\end{proof}

\begin{proposition}\label{prop:ranks}
With the invariant bases
$$
\begin{array}{c|l}
d&\text{basis of }S^d(\g)^\g\\
\hline
6&Q^3,\ P_6\\
8&Q^4,\ QP_6,\ P_8\\
10&Q^5,\ Q^2P_6,\ QP_8\\
12&Q^6,\ Q^3P_6,\ Q^2P_8,\ P_6^2,\ P_{12},
\end{array}
$$
the ranks of $\delta_d$ are
\begin{equation}
 \operatorname{rank}\delta_6=1,\quad
 \operatorname{rank}\delta_8=2,\quad
 \operatorname{rank}\delta_{10}=3,\quad
 \operatorname{rank}\delta_{12}=5.
\label{eq:5:apolar-ranks}
\end{equation}
Moreover,
\begin{equation}
 Q^3,Q^4\notin I_W,\qquad Q^5\in I_W.
\label{eq:5:casimir-powers}
\end{equation}
\end{proposition}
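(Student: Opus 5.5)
The ranks in \eqref{eq:5:apolar-ranks} come from an explicit computation with the operators $\delta_d$. The membership statements \eqref{eq:5:casimir-powers} then follow from Lemma~\ref{lem:apolar} together with a duality argument.

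First I fix concrete generators. I take $Q=\sum_a u_au^a$ to be the quadratic Casimir. For $P_6,P_8,P_{12}$ I use the invariants $\tr_{\mathbf{26}}(X^{d})$, restricted along $X\mapsto \rho_{26}(X)$ for $X\in\g$, or any independent polynomial combination of them.

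To compute $\delta_d(F)=v''(\partial)F$, I note that $v''$ is a weight vector of weight $\omega_2$. Hence $\delta_d(F)$ is a highest-weight-type vector of weight $\omega_2$ in $S^{d-3}(\g)$, and it suffices to evaluate it on a small test space. Concretely, I restrict the output polynomials to a generic point of a subspace such as $\mathfrak h\oplus\bigoplus_{\alpha}\C e_{-\alpha}$ chosen so that weight-$\omega_2$ monomials are detected. I then form the coefficient matrix of the images of the listed basis elements and compute its rank by computer algebra, in the same Mathematica framework as Proposition~\ref{prop:zhu-computation}.

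The expected outcome is a one-dimensional kernel in degrees $6$, $8$, $10$, and $12$, since $2-1=3-2=3-3=5-4$ no longer holds. Precisely, the expected kernel dimensions are $1,1,0,0$, which gives the ranks $1,2,3,5$.

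For \eqref{eq:5:casimir-powers} I use the apolar pairing from the proof of Lemma~\ref{lem:apolar}. The subspace $(I_W)_d=WS^{d-3}(\g)$ is the orthogonal complement of $\ker\bigl(W(\partial)\bigr)\subset S^d(\g)$. Taking $\g$-invariants, an invariant $F$ lies in $I_W$ if and only if it pairs to zero with every invariant $G\in\ker\delta_d$. Here I use that the pairing is nondegenerate on $S^d(\g)^\g$ and that invariants pair trivially with nontrivial isotypic components.

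In degree $10$ the kernel is zero, so the whole of $S^{10}(\g)^\g$ lies in $I_W$, and in particular $Q^5\in I_W$. In degrees $6$ and $8$ the kernel is spanned by a single invariant $G_6$, respectively $G_8$, which I read off from the rank computation. Then $Q^3\notin I_W$ reduces to checking $\langle Q^3,G_6\rangle=Q^3(\partial)G_6\neq0$, and $Q^4\notin I_W$ reduces to $Q^4(\partial)G_8\neq0$. Since $Q(\partial)$ is the Laplacian, these are iterated Laplacians of $G_d$, which I compute by restricting to $\mathfrak h$ via the Chevalley isomorphism and using the radial formula for the Laplacian on invariants.

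I expect the main obstacle to be making the rank computation of $\delta_{12}$ trustworthy. It requires expanding degree-$12$ invariants of a $52$-dimensional algebra and differentiating by a $23$-term cubic. I would first try to reduce the size by evaluating $\delta_d(F)$ only on the component of weight $\omega_2$ and restricting to a low-dimensional slice, for instance $\mathfrak h$ plus a few negative root vectors, where the nonvanishing of the relevant minors can be certified exactly over $\mathbb Q$.
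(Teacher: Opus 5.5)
Your plan follows the paper's proof. You compute the ranks of $\delta_d$ explicitly and then use the apolar duality: $F\in S^d(\g)^\g$ lies in $I_W$ iff $F$ pairs to zero with $\ker\delta_d$. Injectivity of $\delta_{10}$ gives $Q^5\in I_W$, and a nonzero pairing of $Q^4$ with the generator of $\ker\delta_8$ gives $Q^4\notin I_W$; the paper reduces this pairing to $\h$ by Weyl integration, as you propose to do with the radial Laplacian. For $Q^3$ the paper argues dually: $(I_W^\g)_6$ is spanned by the contraction $F_6$ of two copies of $W$, and $F_6$ is not proportional to $Q^3$. This is equivalent to your test $\langle Q^3,G_6\rangle\neq0$. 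If you use the $\tr_{\mathbf{26}}$ invariants, you must certify their algebraic independence (the paper does this for $\tr(\ad X)^m$ with a Jacobian); after that the ranks do not depend on the choice.

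The step you need to tighten is the upper bound $\operatorname{rank}\delta_6\le1$, $\operatorname{rank}\delta_8\le2$. This bound is what guarantees that the kernel vectors $G_6,G_8$ exist, so both non-membership claims rest on it. Evaluating $\delta_d(F)$ at a few points, or on a slice such as $\h$ plus a few negative root vectors, and certifying nonzero minors bounds the rank only from below. That suffices for injectivity in degrees $10$ and $12$. In degrees $6$ and $8$, however, the restricted map can have a kernel that the true map does not.

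Your stated reason that the test space detects weight-$\omega_2$ monomials is also not correct. On $\h\oplus\n_-$ every monomial containing some $f_\alpha$ vanishes, for example $f_{\epsilon_2-\epsilon_3}e_\theta^2$.

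The paper closes the gap with an observation you already made. Since $\delta_d$ takes values in the $\omega_2$-highest-weight vectors of $S^{d-3}(\g)$, one has $\operatorname{rank}\delta_d\le[S^{d-3}(\g):V_\g(\omega_2)]$, which equals $1$ for $d=6$ and $2$ for $d=8$. With these bounds, any restricted computation that reaches rank $1$, respectively $2$, determines the rank, and its kernel is the true kernel. Alternatively, restriction to all of $\h\oplus\n_-$ is injective on $\ad\n_+$-invariants: these are $\operatorname{Ad}(N_+)$-invariant functions, and $\operatorname{Ad}(N_+)(\h\oplus\n_-)$ is dense in $\g$. An exact computation there therefore also gives the true rank.
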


\begin{proof}
The exact certificates for \eqref{eq:5:apolar-ranks}--\eqref{eq:5:casimir-powers} are given in
Appendix~\ref{app:coinvariant-certificate}.  The upper bounds in
degrees $6$ and $8$ are
$$
 [S^3(\g):V_\g(\omega_2)]=1,\qquad
 [S^5(\g):V_\g(\omega_2)]=2.
$$
The nonzero minors in the appendix give the matching lower bounds.
In degrees $10$ and $12$, the displayed full-size minors prove
injectivity.  The degree-six invariant obtained by contracting two
copies of $W$ spans $(I_W^\g)_6$ and is not proportional to
$Q^3$.  The degree-eight apolar certificate pairs $Q^4$
nontrivially with $\ker\delta_8$.  Thus $Q^3,Q^4\notin I_W$.
Injectivity of $\delta_{10}$ gives $Q^5\in I_W$.
\end{proof}

\begin{theorem}\label{thm:coinvariants}
There are isomorphisms
\begin{equation}
 R^\g\cong\C[Q]/(Q^5),\qquad
 R/\{R,R\}\cong\C\{1,Q,Q^2,Q^3,Q^4\}.
\label{eq:5:invariant-quotient}
\end{equation}
For the conformal class $\ell=[\omega]$,
\begin{equation}
 R/\{R,R\}\cong\C[\ell]/(\ell^5).
\label{eq:5:poisson-coinvariants}
\end{equation}
\end{theorem}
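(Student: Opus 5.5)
The plan is to compute $R^\g$ degree by degree from Lemma~\ref{lem:apolar} and Proposition~\ref{prop:ranks}, and then identify the result with $\C[Q]/(Q^5)$. By \eqref{eq:4:c2-presentation}, $R=S(\g)/I_W$, and taking invariants is exact on each finite-dimensional homogeneous component. Hence $R^\g\cong S(\g)^\g/I_W^\g$ as graded algebras. From \eqref{eq:5:invariant-generators}, $S(\g)^\g$ is nonzero only in even degrees. For $d<3$ we have $(I_W)_d=0$, so $R^\g$ is $\C$ in degree $0$ and $\C Q$ in degree $2$. In degree $4$ the space $S^4(\g)^\g=\C Q^2$ has $\delta_4(Q^2)\in S^1(\g)$, and this vanishes by invariance, since $S^1(\g)=\g$ contains no copy of the trivial module onto which an equivariant image of $V_\g(\omega_2)$-pairing could land. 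More directly, $I_W^\g$ in degree $4$ is the invariant part of $W\cdot\g$, and $[V_\g(\omega_2)\otimes\g:\C]=0$. So $(R^\g)_4=\C Q^2$.

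Next I would apply \eqref{eq:5:apolar-kernel} with the ranks \eqref{eq:5:apolar-ranks}. This gives $\dim(R^\g)_6=2-1=1$, $\dim(R^\g)_8=3-2=1$, $\dim(R^\g)_{10}=3-3=0$ and $\dim(R^\g)_{12}=5-5=0$. Using \eqref{eq:5:casimir-powers}, $Q^3$ and $Q^4$ are nonzero in $R^\g$, so they span degrees $6$ and $8$.

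For degrees $d\ge 10$, I would argue that $(R^\g)_d=0$ in all remaining degrees. First, every invariant of degree $\ge 10$ is a polynomial in $Q,P_6,P_8,P_{12}$. Each such monomial is either divisible by $Q^5$, or lies in degree $10$ or $12$, or is divisible by an element of degree $10$ or $12$ (namely $Q^5$, $Q^2P_6$, $QP_8$, $Q^6$, $Q^3P_6$, $Q^2P_8$, $P_6^2$, $P_{12}$), or contains a factor from the degree-$14$ and $16$ pieces such as $P_6P_8$ and $P_8^2$. This last case is the main obstacle. The cleanest approach is to show that $I_W^\g\supset S(\g)^\g_{10}$ and $S(\g)^\g_{12}$; this follows from the computed zeros. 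Then any monomial whose degree is at least $10$ factors as an element of degree $10$ or $12$ times an invariant. The only exceptions are monomials built from $P_6,P_8,P_{12}$ and $Q$ with no sub-product of degree $10$ or $12$, namely $P_6P_8$, $P_8^2$, $P_{12}\cdot(\text{anything})$ and $P_6\cdot P_6$. For these, I would use that $P_6\cdot Q^3\in$ degree $12\subset I_W$ and that $P_6\equiv aQ^3$ and $P_8\equiv bQ^4$ modulo $I_W$, because degrees $6$ and $8$ of $R^\g$ are spanned by $Q^3$ and $Q^4$. Then every product of degree $\ge10$ reduces to a multiple of $Q^{\ge5}\in I_W$.

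This yields $R^\g=\C[Q]/(Q^5)$ as an algebra. The isomorphism $R/\{R,R\}\cong R^\g$ stated above the theorem then gives the second isomorphism in \eqref{eq:5:invariant-quotient}. Finally, the Sugawara formula shows that the image of $\omega$ in $R$ is $\ell=\tfrac{1}{2(k+h^\vee)}\sum u_au^a=\tfrac1{12}Q$ (up to normalization of $Q$), a nonzero multiple of the Casimir. Its powers therefore coincide with those of $Q$ up to nonzero scalars, which proves \eqref{eq:5:poisson-coinvariants}.
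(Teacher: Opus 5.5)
Your proposal is correct and follows essentially the paper's proof. The ranks in degrees $6$ and $8$ together with $Q^3,Q^4\notin I_W$ give $P_6\equiv aQ^3$ and $P_8\equiv bQ^4$, the ranks in degrees $10$ and $12$ give $Q^5=P_{12}=0$ in $R^\g$, and the Sugawara formula makes $\ell$ a nonzero multiple of $Q$. Your separate degree-four check and your list of ``exceptions'' are superfluous: $Q^3\notin I_W$ already forces $Q^2\neq0$, and the single reduction $Q^aP_6^bP_8^c\equiv(\text{const})\,Q^{a+3b+4c}$ handles every monomial of degree at least $10$ not involving $P_{12}$.
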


\begin{proof}
The degree-six and degree-eight rows of \eqref{eq:5:apolar-ranks}, together with \eqref{eq:5:casimir-powers},
show that $P_6$ and $P_8$ are scalar multiples of $Q^3$ and
$Q^4$ in $R^\g$.  The degree-ten row gives $Q^5=0$, and the
degree-twelve row gives $P_{12}=0$.  Equation \eqref{eq:5:invariant-generators} now proves
\eqref{eq:5:invariant-quotient}.  The Sugawara formula identifies $\ell$ with a nonzero
scalar multiple of $Q$, giving \eqref{eq:5:poisson-coinvariants}.
\end{proof}

\subsection{MLDE for $L_{-3}(\fF)$}\label{sec:mlde}

Let
$$
 A=\C[\widetilde G_4,\widetilde G_6],\qquad V=L_{-3}(\g),
$$
and define Zhu's genus-one quotient
$$
 \cH(V)=
 \frac{V\otimes A}{[V\otimes A,V\otimes A]+O_q(V)}.
$$
By \cite[Proposition 5.2]{ArakawaKawasetsu}, their filtration gives a
surjection
$$
 \left(R/\{R,R\}\right)\otimes A
 \twoheadrightarrow\operatorname{gr}\cH(V)
$$

\begin{proposition}\label{prop:genus-one}
The $A$-module $\cH(V)$ is generated by
\begin{equation}
 \vac,\quad L[-2]\vac,\quad L[-2]^2\vac,\quad
 L[-2]^3\vac,\quad L[-2]^4\vac.
\label{eq:5:genus-one-generators}
\end{equation}
\end{proposition}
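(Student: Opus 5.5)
The plan is to lift generators of the associated graded module along the surjection of \cite[Proposition 5.2]{ArakawaKawasetsu},
$$
 \left(R/\{R,R\}\right)\otimes A\twoheadrightarrow\gr\cH(V),
$$
to generators of $\cH(V)$ itself. By Theorem~\ref{thm:coinvariants}, $R/\{R,R\}$ has the basis $1,\ell,\ell^2,\ell^3,\ell^4$. Hence $\gr\cH(V)$ is generated as an $A$-module by the images of these five classes. It remains to identify lifts of $\ell^j$ in $\cH(V)$ and then to pass from $\gr\cH(V)$ back to $\cH(V)$.

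For the lifts, recall that the filtration used by Arakawa--Kawasetsu on $\cH(V)$ is induced from Li's filtration on $V$. Here the symbol map sends the class of $a[-1]b$ (square-bracket modes) to the product $\bar a\bar b$ in $R$. The square-bracket Virasoro vector is $\widetilde\omega=\omega-\tfrac{c}{24}\vac$. Its symbol in $R$ is still $\ell$, because the correction term lies in lower filtration. It follows that $L[-2]^j\vac$ has symbol $\ell^j$ in $R$, and hence its image in $\gr\cH(V)$ is the image of $\ell^j\otimes1$. Thus the five vectors in \eqref{eq:5:genus-one-generators} map onto a generating set of $\gr\cH(V)$.

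To pass from $\gr$ to $\cH(V)$, I would use the standard lifting argument for filtered modules. The filtration $F^p\cH(V)$ is decreasing in $p$ and compatible with the $A$-action. In each fixed conformal weight it is finite, because $V$ is $\mathbb Z_{\ge0}$-graded with finite-dimensional pieces, so each graded piece of $V\otimes A$ meets only finitely many filtration steps. Let $M\subset\cH(V)$ be the $A$-submodule generated by the five vectors, and take an element of $\cH(V)$ of fixed weight. Its leading symbol lies in the image of $M$ in $\gr\cH(V)$, so subtracting an element of $M$ moves it into a deeper filtration step of the same weight. After finitely many steps the remainder is zero. Homogeneity is available because $A$ is graded with $\widetilde G_4,\widetilde G_6$ of weights $4,6$, and the relations $[V\otimes A,V\otimes A]+O_q(V)$ are homogeneous for the square-bracket grading.

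The main obstacle is this compatibility and finiteness of the filtration in each weight, where weight means the $L[0]$-grading combined with the grading of $A$. I would try first to take it directly from the construction in \cite{ArakawaKawasetsu}, since they prove exactly this in order to obtain MLDEs from finiteness of $R/\{R,R\}$. Then only the identification of the symbol of $L[-2]^j\vac$ with $\ell^j$ needs checking. This is routine since $\widetilde\omega$ and $\omega$ differ by a multiple of $\vac$.
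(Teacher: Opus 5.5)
Your proposal is correct and is essentially the paper's proof written out: the symbols of $L[-2]^j\vac$ are nonzero multiples of $\ell^j$, Theorem~\ref{thm:coinvariants} together with the Arakawa--Kawasetsu surjection makes them generate $\gr\cH(V)$ over $A$, and induction on the filtration degree lifts this to $\cH(V)$. The only correction concerns your description of the filtration. The filtration in which $C_2$ dies in $\gr\cH(V)$ is the increasing filtration by square-bracket conformal weight, since $a[-2]b$ is congruent modulo $O_q(V)$ to terms of strictly lower weight; it is not Li's decreasing filtration, in which $\vac\in F^0\setminus F^1$, so $\tfrac{c}{24}\vac$ is not deeper. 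With the weight filtration your remark about the correction term is exactly right (alternatively, Zhu's isomorphism of the square-bracket structure with the original one sends $\widetilde\omega$ to $\omega$), and the lifting simply runs downward in weight.
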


\begin{proof}
Their principal symbols are nonzero scalar multiples of
$1,\ell,\ell^2,\ell^3,\ell^4$.  Theorem
\ref{thm:coinvariants} and induction on the filtration degree give
\eqref{eq:5:genus-one-generators}.
\end{proof}

For a modular form $f$ with weight $k\in2\mathbb Z_{\geq0}$, define the Serre derivative
\begin{equation}
 \D f=\left(q\frac{d}{dq}-\frac{k}{12}E_2\right)f,\qquad
 \D^{(j+1)}f=\D\bigl(\D^{(j)}f\bigr),\quad(j\geq0).
\label{eq:5:serre-derivatives}
\end{equation}
Each application of $\D$ raises the weight by two. We suppress
the weight subscript throughout and assign input weight zero to characters
in the MLDEs. In a composition, the left factor acts at the output weight
of the right factor, thus $\D^{(j)}$ denotes the iterated covariant
derivative with these successive weight shifts.

\begin{theorem}\label{thm:mlde-existence}
The character of every ordinary $V$-module satisfies an equation
\begin{equation}
 \left(
 \D^{(5)}+aE_4\D^{(3)}+bE_6\D^{(2)}
 +cE_4^2\D+dE_4E_6
 \right)f=0
\label{eq:5:mlde-ansatz}
\end{equation}
for constants $a,b,c,d$.
\end{theorem}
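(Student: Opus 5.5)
The plan follows the Arakawa--Kawasetsu mechanism, using Proposition~\ref{prop:genus-one} in place of the general finiteness argument. Since $\cH(V)$ is generated over $A=\C[\widetilde G_4,\widetilde G_6]$ by $\vac, L[-2]\vac,\dots,L[-2]^4\vac$, the next element $L[-2]^5\vac$ can be expressed as
\begin{equation*}
 L[-2]^5\vac\equiv\sum_{j=0}^{4} g_j\, L[-2]^j\vac
 \pmod{[V\otimes A,V\otimes A]+O_q(V)},\qquad g_j\in A .
\end{equation*}
Grading $V\otimes A$ by the square-bracket conformal weight, with $\widetilde G_{2k}$ of weight $2k$, the relations in $O_q(V)$ and the commutators are homogeneous. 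Hence $g_j$ can be taken homogeneous of weight $10-2j$. These weights pin down the coefficients: $g_4=0$ (no modular form of weight $2$), $g_3=a\widetilde G_4$, $g_2=b\widetilde G_6$, $g_1=c\widetilde G_4^2$ and $g_0=d\widetilde G_4\widetilde G_6$. Here we use that the weight-$8$ and weight-$10$ parts of $A$ are one-dimensional.

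Next, apply Zhu's genus-one trace map. For an ordinary module $M$, the functional $S_M(u,\tau)=\tr_M o(u)q^{L_0-c/24}$ annihilates $O_q(V)$ and commutators, so it factors through $\cH(V)$. Zhu's recursion identifies $S_M(L[-2]u,\tau)$ with $\D S_M(u,\tau)$ modulo $\widetilde G_{2k}$-multiples of $S_M(L[2k-2]u,\tau)$. Iterating this, $S_M(L[-2]^j\vac,\tau)$ equals $\D^{(j)}\chi_M$ plus $A$-linear combinations of lower Serre derivatives. Substituting into the relation for $L[-2]^5\vac$ gives a monic equation $\D^{(5)}\chi_M+\cdots=0$ with coefficients in $A$.

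After re-expanding, each coefficient of $\D^{(j)}$ is again homogeneous of weight $10-2j$. The weight count forces the coefficient of $\D^{(4)}$ to vanish, since there is no weight-$2$ holomorphic modular form. The remaining coefficients are constant multiples of $E_4,E_6,E_4^2,E_4E_6$, which is exactly the shape \eqref{eq:5:mlde-ansatz}. The constants $a,b,c,d$ do not depend on $M$, because the relation lives in $\cH(V)$ itself. Thus every ordinary module, namely $L_{-3}(\g)$ and $L_{-3}(\omega_4)$ by Corollary~\ref{cor:ordinary}, satisfies the same equation.

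The main obstacle is careful bookkeeping in the trace recursion. The corrections from $L[2k-2]$ acting on $L[-2]^{j-1}\vac$ must land back in the span of lower $L[-2]^i\vac$ modulo $O_q(V)$ with homogeneous coefficients. This is where the filtration argument behind Proposition~\ref{prop:genus-one} is used again. Given that it holds, no explicit determination of $a,b,c,d$ is needed at this stage; they can be fixed afterwards from the $q$-expansion of the vacuum character.
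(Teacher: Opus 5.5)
Your proposal is correct and follows essentially the same route as the paper: express $L[-2]^5\vac$ in $\cH(V)$ via Proposition~\ref{prop:genus-one}, take the square-bracket weight-$10$ component, use $M_2=0$ and $\dim M_4=\dim M_6=\dim M_8=\dim M_{10}=1$, and convert the relation by Zhu's recursion. The bookkeeping you flag is the triangular identity $S_M(L[-2]^j\vac)=\D^{(j)}\chi_M+\sum_{i<j}a_{ji}\D^{(i)}\chi_M$, with $a_{ji}$ holomorphic of weight $2j-2i$ (Arakawa--Kawasetsu, as cited in Appendix~\ref{app:B3G2}), and it comes directly from Zhu's recursion on Virasoro descendants rather than from the filtration argument. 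Note also that the argument applies to every ordinary module, not only to the two irreducible ones, so your ``namely'' should be dropped.
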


\begin{proof}
By Proposition~\ref{prop:genus-one}, the class of
$L[-2]^5\vac$ is an $A$-linear combination of \eqref{eq:5:genus-one-generators}.  Taking
the homogeneous component of square-bracket weight $10$, and using
$$
 M_2=0,\quad M_4=\C E_4,\quad M_6=\C E_6,\quad
 M_8=\C E_4^2,\quad M_{10}=\C E_4E_6,
$$
gives a monic relation of the form \eqref{eq:5:mlde-ansatz}.  Zhu recursion converts it
to the asserted equation.  Convergence for ordinary characters
follows from quasi-lisseness
\cite[Theorem 5.7]{ArakawaKawasetsu}.
\end{proof}

The Sugawara central charge is
$$
 c=\frac{-3\dim\g}{-3+h^\vee}=-26.
$$
The maximal ideal of $V^{-3}(\g)$ has no nonzero vectors
of conformal weight less than $3$. Hence
\begin{equation}
 \chi_V=q^{13/12}
 \left(1+52q+1430q^2+26286q^3+O(q^4)\right).
\label{eq:5:vacuum-initial-coefficients}
\end{equation}
The $q^3$-coefficient 26286 comes from the difference between the dimension of  conformal-weight three subspace of $V^{-3}(\g)$
$$
52+52^2+\binom{54}{3}=27560,
$$
and $\dim V_\g(\omega_2)=1274$. Substituting \eqref{eq:5:vacuum-initial-coefficients}
into \eqref{eq:5:mlde-ansatz} and comparing the coefficients
of $q^{13/12+n}$ for $n=0,1,2,3$ uniquely determines
$a,b,c,d$. We obtain the explicit MLDE in the following theorem. It is easy to check no smaller order monic MLDE can be satisfied by \eqref{eq:5:vacuum-initial-coefficients}.

\begin{theorem}\label{thm:mlde}
The differential operator in
Theorem~\ref{thm:mlde-existence} is 
\begin{equation}
\begin{aligned}
\mathcal L_5=
\D^{(5)}-\frac{1675}{576}E_4\D^{(3)}
+\frac{965}{2304}E_6\D^{(2)}
+\frac{116545}{82944}E_4^2\D
+\frac{46475}{995328}E_4E_6.
\end{aligned}
\label{eq:5:f4-mlde}
\end{equation}
The equation $\mathcal L_5f=0$ is the unique monic
holomorphic $\mathrm{SL}_2(\mathbb Z)$-MLDE of order five
satisfied by the vacuum character.
No such equation of smaller order is satisfied
by the vacuum character.
\end{theorem}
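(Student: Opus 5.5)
Theorem~\ref{thm:mlde-existence} already guarantees that the characters satisfy a monic equation of the form \eqref{eq:5:mlde-ansatz}. So the plan is to pin down $a,b,c,d$ from the vacuum expansion \eqref{eq:5:vacuum-initial-coefficients}, and then rule out lower orders.

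\textbf{Step 1: the indicial equation.} Write $f=q^{h}\sum_{n\ge0}a_nq^n$ with $h=-c/24=13/12$ and $a_0=1$. With $E_2=1-24q-\cdots$, $E_4=1+240q+\cdots$ and $E_6=1-504q-\cdots$, each $\D^{(j)}$ acts on $q^{h+n}$ as $\prod_{i=0}^{j-1}\bigl(h+n-\tfrac{i}{6}\bigr)$ plus higher-order corrections. The $q^{h}$ coefficient gives the indicial polynomial
\[
\prod_{i=0}^4\Bigl(h-\tfrac i6\Bigr)+a\,h\Bigl(h-\tfrac16\Bigr)\Bigl(h-\tfrac13\Bigr)+b\,h\Bigl(h-\tfrac16\Bigr)+c\,h+d=0 .
\]
This is one linear equation in $a,b,c,d$.

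\textbf{Step 2: the higher coefficients.} The coefficients of $q^{h+1},q^{h+2},q^{h+3}$ involve $a_1=52$, $a_2=1430$, $a_3=26286$. They give three more linear equations. I would verify that the $4\times4$ system is nonsingular, for instance by exact rational elimination. Its solution should be \eqref{eq:5:f4-mlde}. Uniqueness among order-five monic holomorphic $\mathrm{SL}_2(\mathbb Z)$-MLDEs is automatic: $M_2=0$ and the spaces $M_4,\dots,M_{10}$ are one-dimensional, so \eqref{eq:5:mlde-ansatz} is the most general such operator, and its coefficients are forced by the linear system. As a sanity check, the second ordinary character $\chi_{L_{-3}(\omega_4)}$ should have leading exponent $h+\Delta(\omega_4)=13/12+1/2$. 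Here $\Delta(\omega_4)=(\epsilon_1|\epsilon_1+2\rho)/12=6/12$. This value should be a root of the indicial polynomial, which gives an independent check on the arithmetic.

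\textbf{Step 3: minimality.} For each order $m\le4$, write the general monic operator, using $M_2=0$:
\begin{itemize}
\item $m=1$: $\D$;
\item $m=2$: $\D^{(2)}+\alpha E_4$;
\item $m=3$: $\D^{(3)}+\alpha E_4\D+\beta E_6$;
\item $m=4$: $\D^{(4)}+\alpha E_4\D^{(2)}+\beta E_6\D+\gamma E_4^2$.
\end{itemize}
Order $m$ has $m-1$ free parameters. Imposing the equations from the coefficients of $q^{h},\dots,q^{h+3}$ (and $q^{h+4}$ if needed) gives an overdetermined system, and I would show it is inconsistent.
\begin{itemize}
\item $m=1$ fails because $h\neq0$.
\item For $m=2,3$, the indicial equation determines the parameters, or reduces them to one free parameter. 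The $q^{h+1}$ and $q^{h+2}$ equations then contradict the values $52$ and $1430$.
\item For $m=4$, the three parameters are fixed by the $q^{h},q^{h+1},q^{h+2}$ equations, and the $q^{h+3}$ equation with $a_3=26286$ fails.
\end{itemize}

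\textbf{Main obstacle.} The step most likely to cause trouble is order four. Three parameters against only four known coefficients leaves a single test, so that test has to fail, and it depends on the exact value $26286$. That value in turn depends on Theorem~\ref{thm:maximal}: the maximal ideal has only $V_\g(\omega_2)$ in weight $3$ and nothing below. If the weight-three test unexpectedly holds, I would compute $a_4$ from the structure of $J_4$ to get a further test. I would also keep track of the weight conventions for the successive Serre derivatives, since an error there would corrupt every equation.
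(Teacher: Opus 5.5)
Your plan matches the paper's argument. The coefficients $a,b,c,d$ come from the four equations at $q^{13/12+n}$, $n=0,1,2,3$. Lower orders are excluded by checking that the same vacuum data admit no monic holomorphic operator of order at most four.

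\textbf{Correction to your sanity check.} With $2\rho=11\epsilon_1+5\epsilon_2+3\epsilon_3+\epsilon_4$ one gets $(\epsilon_1|\epsilon_1+2\rho)=12$, so $\Delta(\omega_4)=1$, not $1/2$. This agrees with $N=1$ in the proof of Theorem~\ref{thm:maximal}. The exponent to test is therefore $25/12$, which is a root of $P(x)$ via the factor $12x-25$. Your value $19/12$ is not a root, so the check as you wrote it would have given a spurious failure.

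\textbf{The order-four case is not really an obstacle.} Once the $4\times4$ system is known to be nonsingular, suppose a monic $L_m$ with $1\le m\le4$ satisfied the equations for $n\le3$. Then $\D^{(5-m)}\circ L_m$ is again an operator of the form \eqref{eq:5:mlde-ansatz}, and it also satisfies those four equations. Hence it equals $\mathcal L_5$. Its indicial polynomial is $\prod_{i=m}^{4}(x-i/6)\,P_m(x)$, so $P(2/3)$ would have to vanish. It does not, so no lower-order operator can match even these four coefficients, and you never need $a_4$.
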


The indicial polynomial of differential operator $\mathcal{L}_5$ is
$$
 P(x)=\frac{(12x-25)(12x-13)(12x+11)(24x+1)(24x+13)}
 {995328},
$$
and we have a factorization
\begin{equation}\label{eq:5:f4-factorization}
\mathcal L_5=
\left(
\D^{(3)}-\frac{1103}{576}E_4\D-\frac{147}{256}E_6
\right)
\circ
\left(
\D^{(2)}-\frac{143}{144}E_4
\right).    
\end{equation}

\begin{theorem}\label{thm:characters}
The characters of the two irreducible ordinary
$L_{-3}(\g)$-modules have the following expansions:
\begin{equation}\label{eq:5:f4-vacuum-character}
    \chi_{L_{-3}(\g)}
={}q^{13/12}\bigl(
1+52q+1430q^2+26286q^3+361426q^4+3983382q^5+O(q^{6})\bigr),
\end{equation}
and
\begin{equation}\label{eq:5:f4-other-character}
   \chi_{L_{-3}(\omega_4)}
=q^{25/12}\bigl(
26+1079q+22984q^2+336999q^3+3831724q^4 +O(q^{5})\bigr).
\end{equation}
\end{theorem}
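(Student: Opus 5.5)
The plan is to read off both characters as solutions of the fifth-order equation $\mathcal L_5f=0$ of Theorem~\ref{thm:mlde}, which both satisfy by Theorem~\ref{thm:mlde-existence} and Corollary~\ref{cor:ordinary}. I would then fix the normalizations from independent representation-theoretic data.

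The first step is to identify the leading exponents. For the vacuum, $c=-26$ gives the exponent $-c/24=13/12$. For $L_{-3}(\omega_4)$, formula \eqref{eq:2:conformal-weight} gives $\Delta(\omega_4)=(\omega_4|\omega_4+2\rho)/12$. With $\omega_4=\epsilon_1$ and $(\epsilon_1|2\rho)=11$ in $F_4$, this is $12/12=1$, so the exponent is $1+13/12=25/12$. Both values are roots of the indicial polynomial $P(x)$: the factors $12x-13$ and $12x-25$ vanish there. The leading coefficient is $\dim V_\g(\omega_4)=26$.

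The second step is the Frobenius recursion. At an exponent $x_0$, writing $f=q^{x_0}\sum a_nq^n$ and expanding $E_2,E_4,E_6$, the coefficient of $q^{x_0+n}$ gives
\[
P(x_0+n)\,a_n=-\sum_{m<n}(\text{explicit polynomial in }x_0+m)\,a_m .
\]
This determines $a_n$ whenever $P(x_0+n)\neq0$. The other roots are $-11/12$, $-1/24$ and $-13/24$. For $x_0=13/12$ the only resonance is at $n=1$, because $25/12-13/12=1$. For $x_0=25/12$ there is no resonance.

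So the solution at $25/12$ is unique up to scale, and the normalization $a_0=26$ yields \eqref{eq:5:f4-other-character} by running the recursion to $n=4$.

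At $13/12$ the resonance at $n=1$ is the main obstacle. The recursion leaves $a_1$ free (if no logarithm occurs), so the MLDE alone does not determine the vacuum character. I would fix the ambiguity with the representation-theoretic data already computed in \eqref{eq:5:vacuum-initial-coefficients}. We have $a_1=52=\dim\g$, and $a_2=1430$ and $a_3=26286$ follow from Theorem~\ref{thm:maximal}, since the maximal ideal first appears in weight $3$ as $V_\g(\omega_2)$. These values were already used to determine $\mathcal L_5$, so they are consistent with it. Once $a_0=1$ and $a_1=52$ are fixed, $a_4$ and $a_5$ are forced by the recursion, because $P(13/12+n)\neq0$ for $n\ge2$.

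The remaining subtlety is the log-freeness condition at $n=1$. It must hold automatically, because the genuine vacuum character, a $q$-series, satisfies $\mathcal L_5$. The factorization \eqref{eq:5:f4-factorization} gives a cross-check: the right factor $\D^{(2)}-\tfrac{143}{144}E_4$ has indicial roots $-11/12$ and $13/12$. Its solution space at $13/12$ should be compared with the vacuum series. Finally, a practical sanity check is to compare the computed $q^4$ coefficient with a direct count of weight-$4$ states in $V^{-3}(\g)$ minus the weight-$4$ part of $J$, namely $\g\otimes V_\g(\omega_2)$ modulo relations. This is optional, since the recursion already determines it.
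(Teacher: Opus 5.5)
Your proposal is correct and follows the paper's proof: both characters solve $\mathcal L_5f=0$, and the exponents $13/12$ and $25/12=1+13/12$ are indicial roots. At $13/12$ the only resonance is $n=1$, so $a_0=1$ and $a_1=52$ determine the vacuum series, while at $25/12$ there is no resonance for $n\ge1$, so the leading coefficient $26=\dim V_\g(\omega_4)$ determines the second series; your log-freeness remark and cross-checks are harmless extras.
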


\begin{proof}
The relevant indicial roots are $13/12$ and $25/12$.  For $n\geq2$,
$P(13/12+n)\neq0.$ 
Thus \eqref{eq:5:f4-mlde} and the initial values $a_0=1,a_1=52$ determine
\eqref{eq:5:f4-vacuum-character}.  The lowest conformal-weight subspace of
$L_{-3}(\omega_4)$ is the $\g$-module $V_\g(\omega_4)$,
which has dimension $26$. Moreover, $P(25/12+n)\neq0$
for $n\geq1$. Hence its leading coefficient determines \eqref{eq:5:f4-other-character}.
\end{proof}

Put $V_{E_6}=L_{-3}(E_6)$ and $V_{F_4}=L_{-3}(\g)$,
and write $F'=q\,dF/dq$.
Arakawa and Kawasetsu \cite[Section~7]{ArakawaKawasetsu}
obtained the following formula for the vacuum character
of $V_{E_6}$:
\begin{equation*}
    \chi_{V_{E_6}}=-\frac{1}{462}\bigg[\frac{E_6(\tau)E_4'(\tau)}{240\eta^{22}(\tau)}-\eta^{2}(\tau)\bigg]. 
\end{equation*}
Put 
\begin{equation*}
    T=\left(\frac{\eta(2\tau)}{\eta(\tau)}\right)^{26}.
\end{equation*}
\begin{corollary}\label{cor:F4}
The characters of the two irreducible ordinary
$V_{F_4}$-modules are
\begin{equation}\label{eq:5:f4-character-formulas}
    \chi_{V_{F_4}}=\frac{\chi_{V_{E_6}}+T}{2},\qquad \chi_{L_{-3}(\omega_4)}=\frac{\chi_{V_{E_6}}-T}{2}.
\end{equation}
\end{corollary}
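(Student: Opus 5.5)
The plan is to combine the conformal embedding decomposition with a twisted trace. By the decomposition of \cite{AdamovicPerse} used in the proof of Theorem~\ref{thm:associated},
$$
 L_{-3}(\eE)=L_{-3}(\g)\oplus L_{-3}(\omega_4)
$$
as $L_{-3}(\g)$-modules. The Sugawara vectors agree because the embedding is conformal. Taking $q^{L_0-c/24}$-traces therefore gives
$$
 \chi_{V_{F_4}}+\chi_{L_{-3}(\omega_4)}=\chi_{V_{E_6}}.
$$
Here $c=-26$ for both algebras, since $-3\cdot78/(-3+12)=-26$. It thus suffices to show that
$$
 \chi_{V_{F_4}}-\chi_{L_{-3}(\omega_4)}=T.
$$

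The automorphism $\sigma$ of $\eE$ lifts to an automorphism of $L_{-3}(\eE)$ of order two. Its fixed points are $L_{-3}(\g)$, since $\g=\eE^\sigma$ generates the fixed-point subalgebra and the decomposition has multiplicity one. The $(-1)$-eigenspace is then $L_{-3}(\omega_4)$, which contains the $26$-dimensional $(-1)$-eigenspace of $\sigma$ on $\eE$. Hence the difference of characters is the twisted trace
$$
 \tr_{L_{-3}(\eE)}\sigma\, q^{L_0-c/24}.
$$

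The key step is to identify this twisted trace with $T$. The most reliable route avoids computing the trace directly and instead uses the MLDE.
\begin{enumerate}
\item Both characters satisfy $\mathcal L_5 f=0$ by Theorem~\ref{thm:mlde}. Hence so does $D=\chi_{V_{F_4}}-\chi_{L_{-3}(\omega_4)}$.
\item Write $T=q^{13/12}\prod_n(1+q^n)^{26}$. Then $T$ has leading exponent $13/12$ and expansion
$$
 T=q^{13/12}(1+26q+\cdots).
$$
\item From Theorem~\ref{thm:characters}, $D=q^{13/12}(1+52q-26q+\cdots)=q^{13/12}(1+26q+\cdots)$. The first two coefficients of $D$ and $T$ therefore agree.
\item Since $P(13/12+n)\neq0$ for $n\ge2$, a solution with leading exponent $13/12$ is determined by its first two coefficients. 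So it suffices to show that $T$ satisfies $\mathcal L_5T=0$.
\end{enumerate}

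\textbf{Main obstacle.} Step~(4) is the hard part. First, $T$ is modular only for $\Gamma_0(2)$, not $\mathrm{SL}_2(\mathbb Z)$. Second, the local recursion at exponent $13/12$ needs care: the exponent $25/12=13/12+1$ also occurs, so two data are needed, as noted above.

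I would use the factorization \eqref{eq:5:f4-factorization}. I would try to prove that $T$ lies in the span of solutions of the $\Gamma_0(2)$-invariant part. Concretely, I would verify $\mathcal L_5T=0$ as an identity of $\Gamma_0(2)$-modular forms of weight $10$. Write $\mathcal L_5T=T\cdot F$. Here $F$ is a polynomial in the Serre derivatives of $\log T$. These are expressible via $E_2(\tau)-2E_2(2\tau)$ and the Eisenstein series for $\Gamma_0(2)$. So $F$ is a holomorphic modular form of weight $10$ on $\Gamma_0(2)$. That space is finite-dimensional, of dimension $3$, so checking enough $q$-coefficients via the Sturm bound proves $F=0$.

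As a fallback, I would compute the twisted trace directly. I would use the principally-graded realization of the $\sigma$-twisted affine algebra of type $E_6^{(2)}$ together with an explicit character formula. The first route is more straightforward given the paper's tools. Once $D=T$, adding and subtracting gives \eqref{eq:5:f4-character-formulas}.
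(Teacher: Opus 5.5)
Your argument is correct, but it is organized differently from the paper's. The paper does not use the decomposition $L_{-3}(\eE)=L_{-3}(\g)\oplus L_{-3}(\omega_4)$ in this proof. Instead it does three things. First, it checks that both functions $(\chi_{V_{E_6}}\pm T)/2$ have the initial data $q^{13/12}(1+52q+\cdots)$ and $q^{25/12}(26+\cdots)$. Second, it shows that both are annihilated by $\mathcal L_5$. Third, it invokes the indicial-root uniqueness from the proof of Theorem~\ref{thm:characters}. For $\chi_{V_{E_6}}$, the annihilation comes from the Arakawa--Kawasetsu equation $(\D^{(2)}-\frac{143}{144}E_4)\chi_{V_{E_6}}=0$ together with the factorization \eqref{eq:5:f4-factorization}. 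For $T$, it comes from the second factorization $\mathcal L_5=(\D^{(2)}-\frac{323}{144}E_4)\circ(\D^{(3)}-\frac{383}{576}E_4\D-\frac{169}{6912}E_6)$, whose right factor kills $T$ by the identities for $E_2^{(2)}=2E_2(2\tau)-E_2(\tau)$.

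You instead get the sum relation directly from the conformal decomposition, with equal central charge $-26$. So you need uniqueness only for the difference $D$, and you never use the second-order equation for $\chi_{V_{E_6}}$.

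Your certificate for $\mathcal L_5T=0$ is also different but sound. One has $\D^{(j)}T=T\,p_j$ with $p_0=1$, $p_{j+1}=\frac{13}{12}E_2^{(2)}p_j+\D p_j$ and $\D E_2^{(2)}=\frac16\bigl(E_4-2(E_2^{(2)})^2\bigr)$. Hence $\mathcal L_5T=T\,F$, where $F$ is a weight-$10$ polynomial in $E_2^{(2)},E_4,E_6$ lying in the $3$-dimensional space $M_{10}(\Gamma_0(2))$. The Sturm bound $10\cdot3/12=5/2$ then reduces $F=0$ to the vanishing of the coefficients of $q^0,q^1,q^2$. The paper's explicit third-order annihilator is a cleaner closed-form certificate, while your check is mechanical.

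One correction: \eqref{eq:5:f4-factorization} is not the factorization to use for $T$. Its right factor sends $T$ to $\frac{13}{16}T\bigl((E_2^{(2)})^2-E_4\bigr)\neq0$. Your concrete Sturm-bound verification does not depend on it, so nothing breaks.

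In short, the paper's route is purely analytic and independent of the embedding, which it mentions afterwards only as an alternative. Yours relies on the Adamovi\'c--Per\v se decomposition and halves the analytic verification. Combined with your fixed-point argument, it also identifies $T$ with the twisted trace $\tr_{L_{-3}(\eE)}\sigma\, q^{L_0-c/24}$ as a by-product.
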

\begin{proof}
The functions on the right-hand side of
\eqref{eq:5:f4-character-formulas} have the initial
coefficients specified in the proof of
Theorem~\ref{thm:characters}.
It remains to show that they are annihilated by
$\mathcal L_5$. Since $\chi_{V_{E_6}}$ is annihilated by
$\D^{(2)}-\frac{143}{144}E_4$, equation
\eqref{eq:5:f4-factorization} gives
$\mathcal L_5\chi_{V_{E_6}}=0$.
It therefore suffices to prove $\mathcal L_5T=0$.
We have another easily verified factorization
\begin{equation*}
    \mathcal{L}_5= \left(\D^{(2)}-\frac{323}{144}E_4\right)
\circ\left(\D^{(3)}-\frac{383}{576}E_4\D
-\frac{169}{6912}E_6\right).
\end{equation*}
Here the right factor has input weight zero and the left factor
has input weight six. Put $E_2^{(2)}(\tau)=2E_2(2\tau)-E_2(\tau)$, which has weight two.
The modular form identities on congruence subgroup $\Gamma(2)$
\begin{equation*}
   \Big( \log\frac{\eta(2\tau)}{\eta(\tau)}\Big)'=\frac{E_2^{(2)}}{24},\qquad \D E_2^{(2)}=\frac{E_4- 2(E_2^{(2)})^2}{6},\qquad E_6= 4(E_2^{(2)})^3-3E_2^{(2)} E_4
\end{equation*}
imply that $T$ is annihilated by
$\D^{(3)}-\frac{383}{576}E_4\D-\frac{169}{6912}E_6$.
Hence $\mathcal L_5T=0$. 

\end{proof}
In fact, the formulas \eqref{eq:5:f4-character-formulas} can also be directly obtained from the conformal embedding $L_{-3}(\fF) \hookrightarrow L_{-3}(\eE)$ exploiting the $\Gamma$-action. In general, finding an explicit modular form expression for a solution of a MLDE is difficult. On the contrary, proving a modular form satisfies a MLDE is straightforward. It should also be emphasized that our method to prove MLDE for ordinary characters works for general quasi-lisse vertex operator algebras, which does not reply on conformal embeddings.

\section{MLDE for $L_{-2}(B_3)$ and $L_{-2}(G_2)$}\label{sec:MLDE2}
\subsection{The conformal embedding chain and the differential operators}
Put
$$
 V_B=L_{-2}(B_3),\qquad V_G=L_{-2}(G_2),\qquad
 V_D=L_{-2}(D_4).
$$
These vertex algebras all have central charge $-14$ and occur in the conformal embedding chain
\begin{equation*}
 V_G\hookrightarrow V_B\hookrightarrow V_D
\end{equation*}
of Adamovi\'c and Per\v se \cite[Theorems~4.1 and~5.4]{AdamovicPerse}.
The defining ideals and associated varieties of $V_B$ and $V_G$ are studied in
\cite{ADFLM}. In particular, both vertex algebras are quasi-lisse.
Earlier $G_2$ singular-vector and Zhu-algebra calculations at the
admissible levels $-5/3,-4/3,-2/3$ appear in \cite{AxtellLee}.
We write $L^B_{-2}(\lambda)$ and $L^G_{-2}(\lambda)$ when the finite Lie
algebra needs to be specified. In Bourbaki numbering, their irreducible
ordinary modules have the following lowest spaces:
\begin{equation}\label{eq:6:ordinary-module-data}
\begin{array}{c|c|c|c}
\text{algebra}&\text{finite highest weights}&\text{top dimensions}
 &\text{conformal weights}\\ \hline
V_B&0,\ \omega_1&1,\ 7&0,\ 1\\
V_G&0,\ \omega_1,\ \omega_2&1,\ 7,\ 14&0,\ 1,\ 2.
\end{array}
\end{equation}
The classification is given in \cite[Theorems~3.5, 3.6 and~C]{ADFLM};
the conformal weights follow from \eqref{eq:2:conformal-weight}.
For $V_D$, the vacuum is the only irreducible ordinary module and
the ordinary category is completely reducible
\cite[Theorem~4.3]{PerseD}. Complete reducibility for $V_B$ also
follows from \cite[Theorem~5.9(2)]{AKMPP}.

Use the Serre derivatives of \eqref{eq:5:serre-derivatives}, and define the following modular linear differential operators
\begin{align}
 \mathcal L_2&=\D^{(2)}-\frac{35}{144}E_4,
 \\
 \mathcal L_3&=\D^{(3)}-\frac{527}{576}E_4\D
                         -\frac{25}{256}E_6,
\end{align}
and
\begin{equation}
\begin{split}
 \mathcal L_6={}&\D^{(6)}
 +\mu_1E_4\D^{(4)}+\mu_2E_6\D^{(3)}
 +\mu_3E_4^2\D^{(2)}\\
 &+\mu_4E_4E_6\D+\mu_5E_6^2+\mu_6E_4^3,
\end{split}
\end{equation}
where
\begin{equation*}
\begin{aligned}
 \mu_1&=-\frac{13349851}{2770416},&
 \mu_2&=\frac{35698963}{37400616},&
 \mu_3&=\frac{19166721109}{3590459136},\\
 \mu_4&=\frac{1824013379}{5385688704},&
 \mu_5&=\frac{2493803053}{5385688704},&
 \mu_6&=-\frac{141756938771}{172342038528}.
\end{aligned}
\end{equation*}
In the compositions below, $\mathcal L_2$, $\mathcal L_3$ and
$\mathcal L_6$ have input weights $0$, $4$ and $10$, respectively,
as prescribed by the convention in \eqref{eq:5:serre-derivatives}.

Arakawa and Kawasetsu \cite[Section~7]{ArakawaKawasetsu} prove that the
vacuum character of $V_D$ satisfies
\begin{equation}\label{eq:6:d4-mlde}
 \mathcal L_2\chi_{V_D}=0.
\end{equation}
Write $f_B=\chi_{V_B}$, $f_G=\chi_{V_G}$ and $f_D=\chi_{V_D}$, and put
$$
 \mathcal P_B=\mathcal L_3\circ\mathcal L_2,\qquad
 \mathcal P_G=\mathcal L_6\circ\mathcal P_B.
$$
For the vacuum characters $f_B$ of $V_B$ and $f_G$ of $V_G$, we have
\begin{equation}\label{eq:6:vacuum-identities}
 \mathcal P_Bf_B=0,\qquad \mathcal P_Gf_G=0.
\end{equation}
These are proved completely parallel to the proof of $L_{-3}(F_4)$ case. First, we use the Poisson-coinvariant method to find the orders of the monic MLDEs. Then we compute the vacuum character to a sufficient high $q$-degree and use the Fourier coefficients to determine the parameters in the modular linear differential operator.\footnote{An efficient SageMath program was developed in \cite{Duque:2022tub} to compute the ordinary module characters of affine VOA at negative integer levels. We originally used such program to calculate the vacuum characters and obtained the MLDEs for $L_{-3}(F_4)$, $L_{-2}(G_2)$ and $L_{-2}(B_3)$ in 2024. For example, for $L_{-2}(G_2)$ we computed the vacuum character to $q^{15}$-degree to find the 11th order MLDE.}

The orders five and eleven are established in
Appendix~\ref{app:B3G2} using Poisson coinvariants and Zhu recursion.
Here we use equations \eqref{eq:6:vacuum-identities},
the branching rules \eqref{eq:6:branching-characters}, and the
defining relations given by the singular vector of degree six
for $G_2$ and, for $B_3$, by the singular vector of degree two
together with the subsingular vector of degree six
\cite[Theorems~3.6 and~5.8]{ADFLM}.


\subsection{Differential equations for all ordinary modules}
We combine equations \eqref{eq:6:vacuum-identities}
with the branching rules \eqref{eq:6:branching-characters}
to obtain differential equations for the characters
of ordinary $V_B$-modules and $V_G$-modules.

\begin{theorem}\label{thm:mlde-b3}
The character of every ordinary $L_{-2}(B_3)$-module satisfies a monic MLDE of order five:
\begin{equation}\label{eq:6:b3-mlde}
 (\mathcal L_3\circ\mathcal L_2)\chi_M=0.
\end{equation}
\end{theorem}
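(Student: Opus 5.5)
We reduce to the two irreducible ordinary modules and to the vacuum identity \eqref{eq:6:vacuum-identities}. By \eqref{eq:6:ordinary-module-data}, the irreducible ordinary $V_B$-modules are $V_B$ and $L^B_{-2}(\omega_1)$. By complete reducibility of the ordinary category \cite[Theorem~5.9(2)]{AKMPP}, every ordinary $V_B$-module $M$ with finite-dimensional graded pieces is a direct sum of copies of these two. Its character is then a finite linear combination of $f_B$ and $\chi_{L^B_{-2}(\omega_1)}$, or a convergent sum with each conformal-weight piece finite. Since $\mathcal P_B=\mathcal L_3\circ\mathcal L_2$ is linear, it suffices to show
\[
\mathcal P_B f_B=0,\qquad \mathcal P_B\chi_{L^B_{-2}(\omega_1)}=0.
\]
The first identity is \eqref{eq:6:vacuum-identities}. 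The operator $\mathcal P_B$ is monic of order $2+3=5$ and holomorphic, since only $E_4,E_6$ appear as coefficients, so it has the required form.

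For the second character I would use the conformal embedding $V_B\hookrightarrow V_D$. By \cite{AdamovicPerse}, $V_D$ decomposes as a $V_B$-module as $V_D\cong V_B\oplus L^B_{-2}(\omega_1)$. This is the branching rule \eqref{eq:6:branching-characters}, the $B_3\subset D_4$ analogue of the $F_4\subset E_6$ decomposition used in Theorem~\ref{thm:associated}. It gives
\[
\chi_{L^B_{-2}(\omega_1)}=f_D-f_B .
\]
By \eqref{eq:6:d4-mlde} we have $\mathcal L_2 f_D=0$, hence $\mathcal P_B f_D=\mathcal L_3(\mathcal L_2 f_D)=0$. Linearity then gives $\mathcal P_B\chi_{L^B_{-2}(\omega_1)}=\mathcal P_Bf_D-\mathcal P_Bf_B=0$.

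A quick consistency check follows the proof of Theorem~\ref{thm:characters}. The indicial polynomial of $\mathcal P_B$ should have roots $-c/24=7/12$ and $7/12+1=19/12$, matching the conformal weights $0,1$ in \eqref{eq:6:ordinary-module-data}. The leading coefficient $7$ of $f_D-f_B=7q^{19/12}+\cdots$ should be consistent with $\dim V_{B_3}(\omega_1)=7$. Here $8+21=28+1$ at weight one, so $f_D-f_B$ starts at $q^{7/12}\cdot 7q$.

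The main obstacle is the vacuum identity $\mathcal P_Bf_B=0$. If it is not taken as given, it must be established as in Section~\ref{sec:MLDE}. One first shows $\HP(R_{V_B})$ has dimension five, spanned by powers of $\ell$; this is the content of Appendix~\ref{app:B3G2}. Proposition~\ref{prop:genus-one} and Theorem~\ref{thm:mlde-existence} then give some monic order-five MLDE for all ordinary characters. Matching enough $q$-coefficients of $f_B$ pins its four constants, and one verifies the operator factors as $\mathcal L_3\circ\mathcal L_2$. In fact, once existence of an order-five monic equation satisfied by both $f_B$ and $f_D$ is known, uniqueness from the first few coefficients identifies it with $\mathcal P_B$.
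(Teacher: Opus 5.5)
Your proof is essentially the paper's: $b_1=f_D-f_B$ from the branching rule \eqref{eq:6:branching-characters}, $\mathcal P_Bf_D=\mathcal L_3(\mathcal L_2f_D)=0$ by \eqref{eq:6:d4-mlde}, $\mathcal P_Bf_B=0$ from \eqref{eq:6:vacuum-identities}, and then linearity. The only deviation is in passing to arbitrary ordinary modules: you invoke complete reducibility from \cite[Theorem~5.9(2)]{AKMPP}, which the paper also notes is available for $V_B$, whereas the paper uses a finite-length argument with additivity of characters, so that the same step also covers $L_{-2}(G_2)$ without any semisimplicity input.
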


\begin{theorem}\label{thm:mlde-g2}
The character of every ordinary $L_{-2}(G_2)$-module satisfies a monic  MLDE of order eleven
\begin{equation}\label{eq:6:g2-mlde}
 (\mathcal L_6\circ\mathcal L_3\circ\mathcal L_2)\chi_M=0.
\end{equation}
\end{theorem}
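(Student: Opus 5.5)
We must prove Theorem~\ref{thm:mlde-g2}: every ordinary $V_G$-module character is annihilated by $\mathcal P_G=\mathcal L_6\circ\mathcal L_3\circ\mathcal L_2$. By the classification \eqref{eq:6:ordinary-module-data}, it suffices to treat the three irreducibles $L^G_{-2}(0)$, $L^G_{-2}(\omega_1)$, $L^G_{-2}(\omega_2)$. For a general ordinary module $M$, the $L_0$-graded pieces have finite length and $\chi_M$ is a finite nonnegative integer combination of these three characters, so it is also annihilated. The vacuum case is \eqref{eq:6:vacuum-identities}. For the other two we will not solve the MLDE directly. Instead we use the conformal embedding chain $V_G\hookrightarrow V_B\hookrightarrow V_D$ and the branching rules \eqref{eq:6:branching-characters}, which express $\chi_{V_B}$, $\chi_{V_D}$ and the characters of $V_B$-modules in terms of $G_2$-characters.

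The key structural input is the factorization $\mathcal P_G=\mathcal L_6\circ\mathcal P_B$ with $\mathcal P_B=\mathcal L_3\circ\mathcal L_2$. Since $\mathcal L_2\chi_{V_D}=0$ by \eqref{eq:6:d4-mlde}, any function killed by $\mathcal L_2$ or by $\mathcal P_B$ is killed by $\mathcal P_G$. We would first settle Theorem~\ref{thm:mlde-b3}. From \cite{AdamovicPerse}, $V_D=V_B\oplus L^B_{-2}(\omega_1)$, so $\chi_{L^B_{-2}(\omega_1)}=\chi_{V_D}-f_B$. Both terms are killed by $\mathcal P_B$, the first because $\mathcal L_2$ already kills it and the second by \eqref{eq:6:vacuum-identities}. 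This handles both irreducible ordinary $V_B$-modules.

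For $V_G$ we use the decompositions $V_B=V_G\oplus L^G_{-2}(\omega_1)$ and, via $V_D=V_G\oplus 2L^G_{-2}(\omega_1)\oplus L^G_{-2}(\omega_2)$ (triality-type branching of $D_4$ to $G_2$; we assume \eqref{eq:6:branching-characters} has this shape), the relations
\[
\chi_{L^G_{-2}(\omega_1)}=f_B-f_G,\qquad \chi_{L^G_{-2}(\omega_2)}=f_D-2f_B+f_G.
\]
Every term on the right is annihilated by $\mathcal P_G$. Indeed, $\mathcal P_Gf_G=0$ by \eqref{eq:6:vacuum-identities}, $\mathcal P_Gf_B=\mathcal L_6(\mathcal P_Bf_B)=0$, and $\mathcal P_Gf_D=\mathcal L_6\mathcal L_3(\mathcal L_2f_D)=0$. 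By linearity this proves the theorem. As a consistency check, we would compare leading exponents with the indicial roots of $\mathcal P_G$, which must include $-c/24+\{0,1,2\}$ with $c=-14$.

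The main obstacle is that the argument rests entirely on the exact form of the branching rules \eqref{eq:6:branching-characters} and on the vacuum identities \eqref{eq:6:vacuum-identities}. In particular, $\mathcal P_Gf_G=0$ is established only through the Poisson-coinvariant bound in Appendix~\ref{app:B3G2} combined with finitely many Fourier coefficients. If the multiplicity of $L^G_{-2}(\omega_1)$ in $V_D$ differs from our assumed value, the linear combination changes but the same argument goes through. The only requirement is that each $G_2$-character be a $\mathbb Z$-linear combination of $f_G$, $f_B$ and $f_D$. We would verify that the transition matrix is invertible, which it is because it is unitriangular in the order $V_G,\ \omega_1,\ \omega_2$.
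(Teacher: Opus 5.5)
Your proposal is correct and follows the paper's proof essentially step for step. It uses the same inversion $g_1=f_B-f_G$, $g_2=f_D-2f_B+f_G$ of the branching rules, and the same use of $\mathcal P_G=\mathcal L_6\circ\mathcal P_B$ together with $\mathcal L_2f_D=0$ and $\mathcal P_Bf_B=\mathcal P_Gf_G=0$.

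The only step you assert without proof is that an arbitrary ordinary module has finite length. The paper supplies it as follows. A highest-weight vector in a lowest $L_0$-eigenspace has a simple ordinary quotient of the same lowest weight, so every nonzero ordinary subquotient has a nonzero eigenspace at weight $0$, $1$ or $2$. The total dimension of these three eigenspaces therefore bounds the length.
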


\begin{proof}[Proof of Theorems~\ref{thm:mlde-b3} and~\ref{thm:mlde-g2}]
Let $b_1=\chi_{L^B_{-2}(\omega_1)}$ and
$g_i=\chi_{L^G_{-2}(\omega_i)}$ for $i=1,2$.
The branching formulas
\cite[Theorem~4.1 and Corollary~5.5]{AdamovicPerse} give
\begin{equation}\label{eq:6:branching-characters}
 f_D=f_B+b_1,\qquad f_B=f_G+g_1,\qquad b_1=g_1+g_2.
\end{equation}
Equivalently,
$$
 b_1=f_D-f_B,\qquad g_1=f_B-f_G,\qquad
 g_2=f_D-2f_B+f_G.
$$
By \eqref{eq:6:d4-mlde}, both $\mathcal P_B$ and $\mathcal P_G$
annihilate $f_D$. The right factorization and
\eqref{eq:6:vacuum-identities} also give $\mathcal P_Gf_B=0$.
Thus $\mathcal P_B$ annihilates $f_B,b_1$, and $\mathcal P_G$
annihilates $f_G,g_1,g_2$. These exhaust the irreducible ordinary
characters by \eqref{eq:6:ordinary-module-data}.

To pass to all ordinary modules, choose a highest-weight vector in
the lowest nonzero eigenspace of any $L_0$-coset modulo $\mathbb Z$.
Its cyclic module has a simple ordinary highest-weight quotient
with the same lowest conformal weight. Consequently, every nonzero
ordinary subquotient has a nonzero eigenspace at one
of the possible simple lowest conformal weights: $0,1$ for $B_3$
and $0,1,2$ for $G_2$. The sum of the dimensions of these eigenspaces
is finite and decreases in a proper chain of ordinary subquotients.
Hence every ordinary module has finite length. Additivity of
characters now proves both statements without a semisimplicity
assumption. Convergence on the upper half-plane follows from
quasi-lisseness \cite[Theorem~5.7]{ArakawaKawasetsu}.
Finally, the leading coefficient of every factor is one, which
gives the asserted orders and modular weights.
\end{proof}

Appendix~\ref{app:B3G2} will also present a different proof for Theorem \ref{thm:mlde-b3}. 

Arakawa and Kawasetsu \cite[Section~7]{ArakawaKawasetsu}
obtained the following formula for the vacuum character of $V_D$:
\begin{equation*}
    \chi_{V_D}=\frac{E_4'(\tau)}{240\eta^{10}(\tau)}
=\frac{E_2(\tau)E_4(\tau)-E_6(\tau)}{720\eta^{10}(\tau)}. 
\end{equation*}
Put
\begin{equation*}
    T_1=\left(\frac{\eta(2\tau)}{\eta(\tau)}\right)^{14},\qquad T_2=\left(\frac{\eta(3\tau)}{\eta(\tau)}\right)^{7}.
\end{equation*}
\begin{corollary}\label{cor:B3}
The characters of the two irreducible ordinary
$V_B$-modules are
$$
\chi_{V_B}=\frac{\chi_{V_D}+T_1}{2},\qquad \chi_{L^B_{-2}(\omega_1)}=\frac{\chi_{V_D}-T_1}{2}.
$$
\end{corollary}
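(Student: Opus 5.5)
The plan mirrors the proof of Corollary~\ref{cor:F4}. By Theorem~\ref{thm:mlde-b3}, both ordinary characters $f_B$ and $b_1$ are annihilated by $\mathcal P_B=\mathcal L_3\circ\mathcal L_2$. I will show that the two proposed right-hand sides are annihilated by $\mathcal P_B$ and have the correct initial Fourier coefficients. Uniqueness of solutions with prescribed leading data then gives the identities.

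Since \eqref{eq:6:d4-mlde} gives $\mathcal L_2\chi_{V_D}=0$, we get $\mathcal P_B\chi_{V_D}=0$ at once. It therefore suffices to prove $\mathcal P_BT_1=0$. As in the $F_4$ case, I expect a second factorization
\[
\mathcal P_B=\Bigl(\D^{(2)}-\alpha E_4\Bigr)\circ\Bigl(\D^{(3)}-\beta E_4\D-\gamma E_6\Bigr)
\]
for suitable rational $\alpha,\beta,\gamma$. I will find it by expanding both sides in $E_4,E_6$ and Serre derivatives and matching coefficients, and check that it agrees with $\mathcal L_3\circ\mathcal L_2$. I then verify that $T_1$ is killed by the third-order right factor. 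For this I use the $\Gamma_0(2)$ identities already quoted in the proof of Corollary~\ref{cor:F4}:
\[
\Bigl(\log\tfrac{\eta(2\tau)}{\eta(\tau)}\Bigr)'=\tfrac{E_2^{(2)}}{24},\qquad
\D E_2^{(2)}=\tfrac{E_4-2(E_2^{(2)})^2}{6},\qquad
E_6=4(E_2^{(2)})^3-3E_2^{(2)}E_4 .
\]
With these, $\D T_1=\tfrac{14}{24}E_2^{(2)}T_1$, so $T_1$ behaves as a weight-zero form with Serre derivative $\tfrac7{12}E_2^{(2)}T_1$. Iterating expresses $\D^{(2)}T_1$ and $\D^{(3)}T_1$ as polynomials in $E_2^{(2)}$ and $E_4$ times $T_1$. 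Eliminating $E_6$ via the cubic identity reduces the claim to a polynomial identity in $E_2^{(2)},E_4$, which also pins down $\beta,\gamma$. If the factorization does not come out in this shape, the fallback is to expand $\mathcal P_BT_1$ directly as a polynomial in $E_2^{(2)},E_4$ times $T_1$ and check that it vanishes.

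Next I compare initial data. From $\eta(2\tau)/\eta(\tau)=q^{1/24}\prod_n(1+q^n)$ we get $T_1=q^{7/12}(1+14q+\cdots)$. From the $V_D$ formula, $\chi_{V_D}=q^{7/12}(1+28q+\cdots)$, since $c=-14$ and $\dim D_4=28$. Hence $(\chi_{V_D}+T_1)/2=q^{7/12}(1+21q+\cdots)$, which matches $\dim B_3=21$. Likewise $(\chi_{V_D}-T_1)/2=q^{19/12}(7+\cdots)$, which matches the data \eqref{eq:6:ordinary-module-data}.

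Finally I compute the indicial polynomial of $\mathcal P_B$ and check that the relevant exponents $7/12$ and $19/12$ are not congruent modulo $\mathbb Z$ to a larger root in a way that creates a free parameter beyond the coefficients already matched. If they are, I check the extra coefficient directly: the $q^2$-coefficient of the vacuum, obtained from $\dim V^{-2}(B_3)_2$ since the degree-two singular vector is removed. The main obstacle is the verification $\mathcal P_BT_1=0$, that is, finding the right factorization, together with this uniqueness check on the indicial roots.
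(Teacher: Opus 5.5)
Your proposal is correct and follows the paper's route: the paper proves this corollary ``the same as'' Corollary~\ref{cor:F4}, by matching initial coefficients and then using a second factorization of $\mathcal P_B$ whose third-order right factor annihilates $T_1$ by the $\Gamma_0(2)$ identities. The factorization does exist, namely $\mathcal P_B=\bigl(\D^{(2)}-\frac{143}{144}E_4\bigr)\circ\bigl(\D^{(3)}-\frac{95}{576}E_4\D+\frac{245}{6912}E_6\bigr)$, and the indicial roots of $\mathcal P_B$ are $7/12,-5/12,19/12,5/24,-7/24$; since $19/12=7/12+1$ is a root but no $7/12+n$ with $n\ge2$ is, matching the coefficients at $q^{7/12}$ and $q^{19/12}$ already determines both characters, so your $q^2$ fallback is not needed.
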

\begin{corollary}\label{cor:G2}
The characters of the three irreducible ordinary
$V_G$-modules are
\begin{equation*}
    \chi_{V_G}=\frac{\chi_{V_D}+3T_1+2T_2}{6},\quad\ 
\chi_{L^G_{-2}(\omega_1)}=\frac{\chi_{V_D}-T_2}{3},\quad\ 
\chi_{L^G_{-2}(\omega_2)}=\frac{\chi_{V_D}-3T_1+2T_2}{6}.
\end{equation*}
\end{corollary}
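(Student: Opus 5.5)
The statement to prove is Corollary~\ref{cor:G2}. We already know two things. By Theorem~\ref{thm:mlde-g2}, all three ordinary characters are annihilated by $\mathcal P_G=\mathcal L_6\circ\mathcal L_3\circ\mathcal L_2$. By \eqref{eq:6:branching-characters}, $g_1=f_B-f_G$ and $g_2=f_D-2f_B+f_G$. Corollary~\ref{cor:B3} gives $f_B=(f_D+T_1)/2$. It therefore suffices to prove the single identity
\[
 f_G=\frac{f_D+3T_1+2T_2}{6}.
\]
The other two formulas then follow by linear algebra:
\[
 g_1=\frac{f_D+T_1}{2}-\frac{f_D+3T_1+2T_2}{6}=\frac{f_D-T_2}{3},
 \qquad
 g_2=f_D-(f_D+T_1)+f_G=\frac{f_D-3T_1+2T_2}{6}.
\]

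To prove the identity for $f_G$, I would argue as in the proof of Corollary~\ref{cor:F4}, using uniqueness of solutions with prescribed leading data. I would show that the right-hand side $F$ is annihilated by $\mathcal P_G$. For $f_D$ this follows from \eqref{eq:6:d4-mlde}. For $T_1$, it follows because $\mathcal P_G$ annihilates $f_B$ and $f_D$, hence also $T_1=2f_B-f_D$. For $T_2$ I would look for a right factorization of $\mathcal P_G$, or of $\mathcal L_6\circ\mathcal L_3$ after conjugating, whose right-hand third- or lower-order factor kills $T_2$. This would mirror the second factorization used for $\mathcal L_5$. The verification would use the $\Gamma_0(3)$ identities
\[
 \Bigl(\log\frac{\eta(3\tau)}{\eta(\tau)}\Bigr)'=\frac{E_2^{(3)}}{24},
 \qquad
 E_2^{(3)}=\tfrac{1}{2}\bigl(3E_2(3\tau)-E_2(\tau)\bigr),
\]
together with the expression of $\D E_2^{(3)}$, $E_4$ and $E_6$ as polynomials in $E_2^{(3)}$ and a weight-four form on $\Gamma_0(3)$.

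As a robust fallback, I would prove $\mathcal P_G T_2=0$ directly, by Sturm's bound. $\mathcal P_G T_2\cdot\eta^{-7}(3\tau)\eta^{7}(\tau)$, or a suitable normalization, is a holomorphic (weakly) modular form of weight $22$ on $\Gamma_0(3)$ with a character. Vanishing of finitely many $q$-coefficients then forces it to be zero.

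Once $\mathcal P_G F=0$, I would compare $F$ with $f_G$. Both have leading exponent $q^{7/12}$ ($c=-14$), and by Theorem~\ref{thm:mlde-g2} both solve the same order-eleven MLDE. The solution space has a Frobenius basis indexed by the indicial roots, so it suffices to match the coefficients of $F$ and $f_G$ at each indicial root congruent to $7/12$ modulo $\mathbb Z$. I would compute the indicial polynomial of $\mathcal P_G$ and find these roots. I expect them to include $7/12$, $19/12$ and $31/12$, matching the conformal weights $0,1,2$ in \eqref{eq:6:ordinary-module-data}. I would then check that the $q$-expansions agree up to the largest such root. The coefficients of $f_G$ come from the vacuum character of $V_G$, computed from the known defining ideal of \cite{ADFLM} to low order, or from $f_B-g_1$ with $g_1$ read off its top space of dimension $7$.

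I expect the main obstacle to be establishing $\mathcal P_G T_2=0$, since $T_2$ is only modular for $\Gamma_0(3)$ and a clean factorization may not exist. I would first try the factorization route; if it fails, I would use the Sturm-bound computation.
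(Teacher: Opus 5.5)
Your plan follows the paper's own argument. The paper only says the proof is the same as that of Corollary~\ref{cor:F4}: show that the right-hand sides are annihilated by $\mathcal P_G$ and have the correct leading coefficients. Your two economies are legitimate. The formulas for $g_1,g_2$ follow from the one for $f_G$ by \eqref{eq:6:branching-characters} and Corollary~\ref{cor:B3}, and $\mathcal P_GT_1=0$ follows from $T_1=2f_B-f_D$.

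For the uniqueness step, the indicial roots of $\mathcal P_G$ in $7/12+\mathbb Z_{\ge0}$ are exactly $7/12$, $19/12$, $31/12$, one from each of $\mathcal L_2,\mathcal L_3,\mathcal L_6$. The other roots of $\mathcal L_6$ are $-7/36,5/36,17/36$ and two irrational ones. So it suffices to match the coefficients $1,14,119$. These follow from $\dim G_2=14$ and the fact that the maximal ideal of $V^{-2}(G_2)$ begins in degree six. Your alternative source via $f_B-g_1$ does not work as stated, since the top space of $g_1$ does not determine its $q^{31/12}$ coefficient.

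The factorization route for $T_2$, as you describe it, cannot succeed. A right factor with $\mathrm{SL}_2(\mathbb Z)$ coefficients that kills $T_2$ also kills all its $\mathrm{SL}_2(\mathbb Z)$-translates. These span a four-dimensional space, since $[\mathrm{SL}_2(\mathbb Z):\Gamma_0(3)]=4$, with exponents $7/12,-7/36,5/36,17/36$. So the analogue of the cubic factor used for $(\eta(2\tau)/\eta(\tau))^{26}$ must be quartic, and no factor of order at most three exists.

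There are two further slips in that route. Unlike the $\Gamma_0(2)$ case, $E_6$ is not a polynomial in $E_2^{(3)}$ and a weight-four form on $\Gamma_0(3)$; you would need the weight-eight relation $8E_2^{(3)}E_6=27(E_2^{(3)})^4-18(E_2^{(3)})^2E_4-E_4^2$. Also, with your normalization of $E_2^{(3)}$, the logarithmic derivative of $\eta(3\tau)/\eta(\tau)$ is $E_2^{(3)}/12$, not $E_2^{(3)}/24$.

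Your Sturm-bound fallback is sound, and it is what actually proves $\mathcal P_GT_2=0$. Since $\D T_2=\tfrac7{12}E_2^{(3)}T_2$ and $\D E_2^{(3)}=\tfrac1{12}\bigl(E_4-3(E_2^{(3)})^2\bigr)$, the Leibniz rule makes $\mathcal P_GT_2/T_2$ a polynomial in $E_2^{(3)},E_4,E_6$. It is therefore a holomorphic modular form of weight $22$ on $\Gamma_0(3)$ with trivial character, and it vanishes once its coefficients through $q^7$ do.
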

The proofs of Corollaries \ref{cor:B3} and \ref{cor:G2} are the same as the proof of Corollary \ref{cor:F4}.

\appendix

\section{The Harish--Chandra polynomials}\label{app:polynomials}

The image $v'$ in Zhu's algebra gives the following basis of
Harish--Chandra polynomials.
\begingroup
\raggedbottom
\small
\setlength{\jot}{5pt}
\begin{align*}
p_1&={}h_4(h_4-1)(2h_2+2h_3+h_4+3),\\[0pt]
p_2&={}-h_4(h_3+h_4-1)(2h_2+2h_3+h_4+3),\\[0pt]
p_3&={}-h_4(h_2-2h_4+2)(2h_2+2h_3+h_4+3),\\[0pt]
p_4&={}-h_3h_4(h_3+h_4+1),\\[0pt]
p_5&={}\begin{aligned}[t]
  &-2h_2^2h_4-8h_2h_3^2-18h_2h_3h_4+8h_2h_3-5h_2h_4^2+h_2h_4-4h_3^3\\
  &\quad{}-8h_3^2h_4-4h_3^2-4h_3h_4^2-12h_3h_4+8h_3-2h_4^3-4h_4^2+6h_4,
\end{aligned}\\[0pt]
p_6&={}-h_3(2h_2^2-h_2h_3-6h_2h_4+4h_2-h_3^2-2h_3h_4-h_3+h_4^2-5h_4+2),\\[0pt]
p_7&={}h_4(h_4-1)(2h_1+2h_2+2h_3+h_4+3),\\[0pt]
p_8&={}-h_4(2h_1h_3+2h_1h_4-2h_1+2h_2h_3+2h_2h_4-2h_2+h_3^2+2h_3h_4+h_4^2+2h_4-3),\\[0pt]
p_9&={}\begin{aligned}[t]
  &-h_4(2h_1^2+4h_1h_2+2h_1h_3-3h_1h_4+9h_1+2h_2^2+2h_2h_3\\
  &\quad{}-3h_2h_4+7h_2-4h_3h_4+4h_3-2h_4^2-4h_4+6),
\end{aligned}\\[0pt]
p_{10}&={}\begin{aligned}[t]
  &-2h_1^2h_4-4h_1h_2h_4-8h_1h_3^2-18h_1h_3h_4+8h_1h_3-5h_1h_4^2-h_1h_4\\
  &\quad{}-2h_2^2h_4-8h_2h_3^2-18h_2h_3h_4+8h_2h_3-5h_2h_4^2+h_2h_4-4h_3^3\\
  &\quad{}-8h_3^2h_4-4h_3^2-4h_3h_4^2-12h_3h_4+8h_3-2h_4^3-4h_4^2+6h_4,
\end{aligned}\\[0pt]
p_{11}&={}\begin{aligned}[t]
  &-h_3(2h_1^2+4h_1h_2-h_1h_3-6h_1h_4+6h_1+2h_2^2-h_2h_3\\
  &\quad{}-6h_2h_4+4h_2-h_3^2-2h_3h_4-h_3+h_4^2-5h_4+2),
\end{aligned}\\[0pt]
p_{12}&={}\begin{aligned}[t]
  &-h_1(16h_1h_2+4h_1h_3+2h_1h_4+16h_2^2+8h_2h_3+4h_2h_4\\
  &\quad{}+16h_2+6h_3^2+6h_3h_4+4h_3-11h_4^2+17h_4),
\end{aligned}\\[0pt]
p_{13}&={}-h_4(h_4-1)(4h_1+8h_2+6h_3+3h_4+9),\\[0pt]
p_{14}&={}h_4(4h_1h_2-10h_1h_4+10h_1+4h_2^2-20h_2h_4+24h_2-15h_3h_4+15h_3-8h_4^2-16h_4+24),\\[0pt]
p_{15}&={}\begin{aligned}[t]
  &240h_1^2h_2-240h_1h_2^2-480h_1h_2h_3+688h_1h_2h_4-132h_1h_4^2+132h_1h_4-480h_2^3-480h_2^2h_3\\
  &\quad{}+1428h_2^2h_4-720h_2^2-48h_2h_3^2+1284h_2h_3h_4-432h_2h_3+1026h_2h_4^2+1142h_2h_4\\
  &\quad{}-240h_2+36h_3^3+72h_3^2h_4+36h_3^2+342h_3h_4^2-198h_3h_4-72h_3+185h_4^3+370h_4^2-555h_4,
\end{aligned}\\[0pt]
p_{16}&={}\begin{aligned}[t]
  &-h_4(8h_1h_2+8h_1h_3+8h_1h_4-8h_1+8h_2^2+16h_2h_3\\
  &\quad{}+16h_2h_4-8h_2+6h_3^2+12h_3h_4+5h_4^2+10h_4-15),
\end{aligned}\\[0pt]
p_{17}&={}\begin{aligned}[t]
  &12h_1h_2h_3-40h_1h_2h_4+6h_1h_3^2-40h_1h_3h_4+12h_1h_3-40h_1h_4^2+40h_1h_4\\
  &\quad{}+12h_2^2h_3-40h_2^2h_4+12h_2h_3^2-80h_2h_3h_4+36h_2h_3-80h_2h_4^2+40h_2h_4\\
  &\quad{}+3h_3^3-30h_3^2h_4+18h_3^2-60h_3h_4^2+24h_3-25h_4^3-50h_4^2+75h_4,
\end{aligned}\\[0pt]
p_{18}&={}\begin{aligned}[t]
  &-4524h_1h_2h_3-7600h_1h_2h_4-6582h_1h_3^2-6160h_1h_3h_4-204h_1h_3\\
  &\quad{}-1480h_1h_4^2+1480h_1h_4-4524h_2^2h_3-7600h_2^2h_4-13164h_2h_3^2\\
  &\quad{}-12320h_2h_3h_4-4932h_2h_3-2960h_2h_4^2-4640h_2h_4-6531h_3^3-7050h_3^2h_4\\
  &\quad{}-10026h_3^2-2490h_3h_4^2-5130h_3h_4-408h_3-160h_4^3-320h_4^2+480h_4,
\end{aligned}\\[0pt]
p_{19}&={}\begin{aligned}[t]
  &21600h_1h_2^2+13332h_1h_2h_3+11180h_1h_2h_4+10800h_1h_2-8454h_1h_3^2\\
  &\quad{}-14020h_1h_3h_4-3948h_1h_3-5650h_1h_4^2+5650h_1h_4+32400h_2^3+34932h_2^2h_3\\
  &\quad{}+25130h_2^2h_4+48600h_2^2-7188h_2h_3^2-12650h_2h_3h_4+22716h_2h_3\\
  &\quad{}-10625h_2h_4^2+33505h_2h_4+16200h_2-10707h_3^3-22665h_3^2h_4-18882h_3^2\\
  &\quad{}-20085h_3h_4^2-18765h_3h_4-1416h_3-6085h_4^3-12170h_4^2+18255h_4,
\end{aligned}\\[0pt]
p_{20}&={}\begin{aligned}[t]
  &72000h_1^3+216000h_1^2h_2+144000h_1^2h_3+165000h_1^2h_4+252000h_1^2+86400h_1h_2^2\\
  &\quad{}+237324h_1h_2h_3+289760h_1h_2h_4+367200h_1h_2+117222h_1h_3^2+227960h_1h_3h_4\\
  &\quad{}+325164h_1h_3+3500h_1h_4^2+383500h_1h_4+216000h_1-129600h_2^3-50676h_2^2h_3\\
  &\quad{}-29440h_2^2h_4-194400h_2^2+91884h_2h_3^2-2000h_2h_3h_4+94212h_2h_3\\
  &\quad{}-162200h_2h_4^2+143560h_2h_4-64800h_2+45651h_3^3+14370h_3^2h_4+137826h_3^2\\
  &\quad{}-117870h_3h_4^2+127170h_3h_4+112488h_3-60520h_4^3-121040h_4^2+181560h_4.
\end{aligned}
\end{align*}
\endgroup
\section{A Gr\"obner basis}\label{app:groebner}
With lexicographic order $h_1>h_2>h_3>h_4$, the ideal generated by
the polynomials in Appendix~\ref{app:polynomials} has the following
reduced basis, up to multiplication of individual elements by
nonzero rational scalars:
\begingroup
\small
\begin{align*}
g_1={}&h_1(2h_1^2+7h_1+6h_2-h_3-3h_4+6),\\
g_2={}&4h_1^2h_2+2h_1h_2+6h_1h_3+10h_2h_3
-6h_2h_4+4h_3^2-3h_3h_4+11h_3,\\
g_3={}&h_3(2h_1^2-h_1-10h_2-5h_3-10),\\
g_4={}&h_4(h_1^2+3h_1+2h_2),\\
g_5={}&4h_1h_2^2+2h_1h_2-6h_1h_3-10h_2h_3
+6h_2h_4-4h_3^2+3h_3h_4-11h_3,\\
g_6={}&h_3(4h_1h_2+6h_1+10h_2+5h_3+10),\\
g_7={}&h_2h_4(h_1-1),\\
g_8={}&h_1h_3(h_3-1),\\
g_9={}&h_1h_3h_4,\\
g_{10}={}&h_1h_4(h_4-1),\\
g_{11}={}&h_2(2h_2^2+3h_2-h_3-3h_4+1),\\
g_{12}={}&h_3(2h_2^2+3h_2-h_3-3h_4+1),\\
g_{13}={}&h_2h_4(h_2+2),\\
g_{14}={}&h_3(h_2h_3-h_2+h_3+3h_4-1),\\
g_{15}={}&h_2h_3h_4,\\
g_{16}={}&h_2h_4(h_4-1),\\
g_{17}={}&h_3(h_3^2-h_3-6h_4),\\
g_{18}={}&h_3h_4(h_3+2),\\
g_{19}={}&h_3h_4(h_4-1),\\
g_{20}={}&h_4(h_4-1)(h_4+3).
\end{align*}
\endgroup
Exact division gives zero remainder for every $p_i$ modulo
$(g_1,\ldots,g_{20})$ and for every $g_i$ modulo
$(p_1,\ldots,p_{20})$.  The ideal $(g_1,\ldots,g_{20})$ is zero-dimensional.
Its zero set consists of the fourteen points listed in
Proposition~\ref{prop:zhu-computation}.

\section{Calculations of Poisson coinvariants}
\label{app:coinvariant-certificate}
This section collects some necessary data for the computational proof of  Proposition \ref{prop:ranks}. Order the forty-eight roots lexicographically by their doubled
$\epsilon$-coordinates, let $x_1,\ldots,x_{48}$ be the
corresponding root vectors. Complete the basis by
$x_{48+i}=h_i$ for $1\leq i\leq4$, and define
$$
 T_m(x)=\operatorname{tr}(\operatorname{ad}x)^m
 \qquad(m=2,6,8,12).
$$
At $h=(1,-9,-6,-2)$, the Jacobian determinant of
$(T_2,T_6,T_8,T_{12})$ is congruent to
$
235133\pmod {1000003}.
$
Thus these invariants are algebraically independent. We take
$$
 Q=T_2,\qquad P_6=T_6,\qquad P_8=T_8,\qquad P_{12}=T_{12}.
$$
For $1\leq t\leq10$, set
$$
 x(t)=\sum_{j=1}^{52}c_j(t)x_j,\qquad
 c_j(t)=\bigl(17j^2+31tj+7t^2\bmod11\bigr)-5.
$$
The following matrices are submatrices of
$\bigl(\delta_d(F_j)(x(t))\bigr)$, reduced modulo
$p=1000003$:
\begin{equation}
\begin{array}{c|c|c}
d&\text{matrix}&\det\\
\hline
6&(783144)&783144\\[1mm]
8&\begin{pmatrix}799068&695043\\550969&709965\end{pmatrix}
&612867\\[3mm]
10&
\begin{pmatrix}
122447&384412&414793\\
238126&803313&383695\\
891696&371658&947093
\end{pmatrix}
&994939\\[5mm]
12&
\begin{pmatrix}
337139&712461&551305&573583&664408\\
714074&144322&665362&688898&419163\\
154701&59050&868246&965890&947411\\
454991&444447&184227&47397&561961\\
895572&585010&286906&287616&967123
\end{pmatrix}
&208521.
\end{array}
\label{eq:C:apolar-minors}
\end{equation}
The columns are taken in the order used in
Proposition~\ref{prop:ranks}. The selected rows are the first
$1,2,3,5$ evaluation points, respectively.
Let $F_6\in(I_W^\g)_6$ be the contraction of two copies of $W$.
For the coefficients of $F_6$ and $Q^3$ at the monomials
$h_4^6$ and $h_3^2h_4^4$, the cross determinant is
$-7/{8162933760}\neq0.$  Thus $F_6$ is not proportional to $Q^3$.
In degree eight, the first two evaluation rows on
$(Q^4,QP_6,P_8)$ are
$$
\begin{pmatrix}
799068&695043&579902\\
550969&709965&734184
\end{pmatrix}.
$$
Their kernel is generated by 
\begin{equation}
 (37328,50453,1)\pmod p.
\label{eq:C2}
\end{equation}
Weyl integration reduces the invariant apolar pairing with $Q^4$
to the moment row
$(964592,549089,347104),$ whose pairing with the displayed kernel vector equals
\begin{equation}
 633470\pmod p.
\label{eq:C:apolar-pairing}
\end{equation}
Equations \eqref{eq:C:apolar-minors}--\eqref{eq:C:apolar-pairing} are nonzero integral or rational
certificates. All denominators are invertible modulo $p$.

\section{Poisson-coinvariant proofs of the MLDE order bounds}
\label{app:B3G2}
We summarize the order arguments for $V_B=L_{-2}(B_3)$ and
$V_G=L_{-2}(G_2)$.  For either algebra, put $R=V/C_2(V)$ and
$\mathcal M=\C[E_4,E_6]$.  Denote the filtered Zhu quotient
of Section~\ref{sec:mlde} by $\mathcal Q$, which has a surjection
$$
 \mathcal M\otimes\HP(R)\twoheadrightarrow\gr\mathcal Q.
$$
Put $u_j=L[-2]^j\vac$. For every ordinary $V$-module $M$,
Zhu's recursion formula gives
$Z_M(u_j)=\D^{(j)}\chi_M+
\sum_{i<j}a_{ji}\D^{(i)}\chi_M$, where
$a_{ji}\in\mathcal M_{2j-2i}$
\cite[Proposition~5.2 and Lemmas~5.4--5.6]{ArakawaKawasetsu}.
Thus a homogeneous relation with coefficient one on $u_n$ gives a
common monic order-$n$ equation, without inverting modular forms.

\subsection*{The vertex algebra $L_{-2}(B_3)$}
Let $W=\C^7$, identify $\mathfrak{so}(W)$ with $\bigwedge^2W$, and set
$$
 Y=\{u\wedge v:u,v\in W\},\qquad A=\C[Y],\qquad
 C(X)=-\tfrac12\tr(X^2).
$$
The quadratic singular-vector module is the space
$\bigwedge^4W\cong L_{B_3}(2\omega_3)$ of Pl\"ucker relations,
hence $A\twoheadrightarrow R_{V_B}$.  The degree-six subsingular
relation supplies a nonzero
$U_6\cong L_{B_3}(4\omega_1)\subset A_6$ in its kernel $J$
\cite[Section~5]{ADFLM}.  Nonvanishing follows from the leading
Harish--Chandra polynomial $p_4^{B_3}$ of
\cite[Appendix~B]{ADFLM}:
$(p_4^{B_3})_{[6]}(t\omega_1)=-384t^6$. Here the subscript $[6]$
denotes the homogeneous component of degree six, and the line
$t\omega_1$ lies in $Y$,
so this symbol survives the Pl\"ucker quotient.

The dense locus $C\ne0$ consists of scalar multiples of the orbit
of $X_0=e_1\wedge e_2$.  Its stabilizer is
$K=\mathrm{SO}_2\times\mathrm{SO}_5$, and $X_0$ is conjugate to $-X_0$;
therefore $A^{\mathfrak{so}(W)}=\C[C]$.
Let $\mathcal H_4$ be the harmonic quartics on $W$.
A $K$-invariant quartic $as^2+bst+ct^2$, with
$s=z_1^2+z_2^2$ and $t=z_3^2+\cdots+z_7^2$, is harmonic precisely when
$16a+10b=4b+28c=0$.  Hence $\dim\mathcal H_4^K=1$.
With $Q=z^{\mathsf T}(-X^2)z$ and $S=(z,z)$, define
$$
 H(X;z)=Q^2-\frac8{11}CQS+\frac8{99}C^2S^2.
$$
This is a nonzero degree-four covariant into $\mathcal H_4$.
Density and $\dim\mathcal H_4^K=1$ imply that the degree-six
covariant corresponding to $U_6$ is $aCH$, with $a\ne0$.
The invariant Fischer pairing gives
$\langle H,H\rangle=\frac{2240}{99}C^4$, consequently
$\langle aCH,H\rangle\in J$ implies $C^5\in J$.
The image $[\omega]\in R_{V_B}$ of the Sugawara conformal
vector is a nonzero scalar multiple of the image of $C$, so
$$
 [\omega]^5=0,\qquad
 \HP(R_{V_B})=\operatorname{span}_{\C}
 \{1,[\omega],[\omega]^2,[\omega]^3,[\omega]^4\}.
$$
For the spanning assertion, use exactness of invariants degreewise
and $\mathfrak{so}(W)R_{V_B}\subset\{R_{V_B},R_{V_B}\}$.
Filtered induction now makes $u_0,\ldots,u_4$ generate $\mathcal Q$.
The weight-ten relation for $u_5$ gives an operator
$\D^{(5)}+aE_4\D^{(3)}+bE_6\D^{(2)}
+dE_4^2\D+eE_4E_6$.
To identify it, use
$$
 F=\chi_{V_D}=q^{7/12}(1+28q+329q^2+\cdots),\qquad
 {\chi_{L^B_{-2}(\omega_1)}}
       =q^{19/12}(7+112q+\cdots).
$$
The second coefficient follows from
$\dim(V_B)_2=21+\binom{22}{2}-35=217$ and
$[q^{7/12+2}]F=329$.
After dividing the two non-vacuum coefficient equations by seven,
the coefficient matrix has determinant $1774080$. Its unique solution is
$$
 (a,b,d,e)=\left(-\frac{667}{576},\frac{335}{2304},
                    \frac{8365}{82944},-\frac{23275}{995328}\right).
$$
The resulting operator is exactly $\mathcal P_B=\mathcal L_3\circ
\mathcal L_2$ in \eqref{eq:6:b3-mlde}. Its existence has thus been
established independently of a finite Fourier fit.

\subsection*{The vertex algebra $L_{-2}(G_2)$}
Set $\mathfrak g=G_2$ and work first over its rational Chevalley form.
The defining singular
symbol gives
$$
 R_{V_G}=S(\mathfrak g)/(U),\qquad
 U=U(\mathfrak g)P\cong L_{G_2}(4\omega_1)\subset S^6(\mathfrak g),
 \qquad\dim U=182,
$$
where $P$ is normalized by the descendant identities in
\cite[Proposition~4.1]{ADFLM}.
On $\mathfrak g$ put
$u(X)=\frac18\tr((\ad X)^2)$ and
$v(X)=\frac14\tr((\ad X)^6)$, these generate
$S(\mathfrak g^*)^{\mathfrak g}=\mathbb Q[u,v]$.
The canonical apolar pairing gives
$$
 \dim(R_{V_G})_d^{\mathfrak g}
 =\dim\ker\bigl(P(\partial):
 S^d(\mathfrak g^*)^{\mathfrak g}\longrightarrow
 S^{d-6}(\mathfrak g^*)\bigr).
$$
In the invariant basis $u^{(d-6j)/2}v^j$, the finite evaluation
certificate has the following nonzero square minors modulo $101$:
$$
\begin{array}{c|rrrr}
 d&10&12&14&18\\ \hline
 \text{minor size}&1&2&3&4\\
 \text{minor residue}&79&62&55&23
\end{array}
$$
Thus the invariant dimensions in degrees $10,12$ are at most one,
and those in degrees $14,18$ vanish.
Since $(R_{V_G})^{\mathfrak g}$ is a quotient of
$\mathbb Q[c_2,c_6]$, the latter vanishings kill
$c_2^7,c_2^4c_6,c_2c_6^2,c_6^3$, and hence every invariant monomial
of degree at least $14$.
As above, Poisson coinvariants are a further quotient, so
$$
\begin{array}{c|rrrrrrr}
 d&0&2&4&6&8&10&12\\ \hline
 \dim\HP(R_{V_G})_d\text{ at most}&1&1&1&2&2&1&1
\end{array}
$$
and all other degrees vanish.  Consequently
$\dim\mathcal Q_{22}\le
\sum_d\dim\HP(R_{V_G})_d\dim\mathcal M_{22-d}\le15$.
The extra degree-six invariant means that nilpotence of $[\omega]$
alone does not supply the required scalar order bound.

To obtain a monic relation, consider the fifteen operators
$$
 \mathcal B=\{E_4^aE_6^b\D^{(j)}:
       4a+6b+2j=22,\ 0\le j\le9\}.
$$
Put $B=\chi_{V_B}$, $G=\chi_{V_G}$ and $T=2B-F$.
By \eqref{eq:6:branching-characters}, $F,T,G$ are linear combinations of
ordinary $V_G$-characters.  Their needed coefficients after removing
$q^{7/12}$ are
$$
\begin{array}{c|rrrrrrrr}
 &q^0&q^1&q^2&q^3&q^4&q^5&q^6&q^7\\ \hline
 F&1&28&329&2632&&&&\\
 T&1&14&105&574&2576&10052&&\\
 G&1&14&119&770&4165&19754&84420&z
\end{array}
$$
Here $z=\dim(V_G)_7\in\mathbb Z$ is left unspecified.
The $F,T$ entries follow from the already proved $D_4,B_3$ equations.
The $G$ entries follow from
$q^{-7/12}G=\prod_{m\ge1}(1-q^m)^{-14}-182q^6+O(q^7)$.
Order $\mathcal B$ lexicographically by $(j,a,b)$ and form
$\mathsf N(z)$ using the coefficients of $\mathcal P F$ at $n=0,\ldots,3$,
$\mathcal P T$ at $n=1,\ldots,5$, and $\mathcal P G$ at $n=2,\ldots,7$, for each
column $\mathcal P\in\mathcal B$, where $n$ denotes the exponent $7/12+n$.
Exact elimination gives
$
 \det\mathsf N(z)\equiv9\pmod{59}
 .
$
Only the last row depends on $z$, and the constant and linear
coefficients of the determinant are $9$ and $0$ modulo $59$.
This proves independence of the fifteen trace vectors.
The triangular Zhu identities therefore make the corresponding
$E_4^aE_6^b u_j$ a basis of $\mathcal Q_{22}$.
Expressing $u_{11}$ in that basis proves a common relation
$$
 \bigg(\D^{(11)}+\sum_{j=0}^{9}
 f_j\D^{(j)}\bigg)\chi_M=0,
 \qquad f_j\in\mathbb Q[E_4,E_6]_{22-2j}.
$$
There is no tenth-derivative term since $\mathcal M_2=0$.
This certificate proves existence of a common monic order-eleven
equation, without asserting minimality.  Identification with the
specific factorized operator in \eqref{eq:6:g2-mlde} additionally uses
the equation $\mathcal P_Gf_G=0$ in \eqref{eq:6:vacuum-identities}. The branching argument in Section~\ref{sec:MLDE2} then gives its validity for every ordinary character.

\section*{Acknowledgements}    
The authors acknowledge the use of OpenAI’s ChatGPT as an auxiliary tool during the preparation of this manuscript, including for mathematical discussion and editorial assistance. The authors take full responsibility for the final manuscript.

D.A. was partially supported by the Croatian Science Foundation under the project IP-2022-10-9006  and by the project “Implementation of cutting-edge research and its application as part of the Scientific Center of
Excellence for Quantum and Complex Systems, and Representations of Lie Algebras”, Grant No. PK.1.1.10.0004, co-financed by the European Union through the
European Regional Development Fund - Competitiveness and Cohesion Programme 2021-2027. Part of the research reported in this paper was conducted during D.A.’s visit to USTC, Hefei, in July 2026. He is grateful to USTC for its warm hospitality. 
X.D. would like to thank Kazuya Kawasetsu for helpful discussions. 
K.S. is supported by China NSF grant No.1250010384. K.S. would like to thank Amir Kashani-Poor for useful discussions.

\bigskip


\begin{thebibliography}{99}


\bibitem{Ad}
D.~Adamovi\'c,
Representations of vertex algebras associated to symplectic affine
Lie algebra at half-integer levels,
Ph.D. thesis, University of Zagreb, 1996 (in Croatian).

\bibitem{WExtensions}
D.~Adamovi\'c, T.~Arakawa, T.~Creutzig, A.~R.~Linshaw, A.~Moreau,
P.~M\"oseneder Frajria and P.~Papi,
\emph{$W$-algebras as conformal extensions of affine VOAs},  Advances in Mathematics 503 (2026), 111266.

\bibitem{AM}
D.~Adamovi\'c and A.~Milas,
Vertex operator algebras associated to modular invariant
representations for $A_1^{(1)}$,
Math. Res. Lett. \textbf{2} (1995), no.~5, 563--575.

\bibitem{AKMPP}
D.~Adamovi\'c, V.~G.~Kac, P.~M\"oseneder Frajria, P.~Papi and O.~Per\v se,
\emph{An application of collapsing levels to the representation theory of affine vertex algebras},
Int. Math. Res. Not. IMRN \textbf{2020} (2020), no.~13, 4103--4143.


\bibitem{APNonrational}
D.~Adamovi\'c and O.~Per\v se,
\emph{Representations of certain non-rational vertex operator algebras of affine type},
J. Algebra \textbf{319} (2008), 2434--2450.

\bibitem{AdamovicPerse}
D.~Adamovi\'c and O.~Per\v se,
\emph{Some general results on conformal embeddings of affine vertex operator algebras},
Algebr. Represent. Theory \textbf{16} (2013), 51--64.

\bibitem{AdamovicVukorepa}
D.~Adamovi\'c and I.~Vukorepa,
\emph{A new quasi-lisse affine vertex algebra of type $D_4$},
Commun. Contemp. Math. (2026), 2650062.



\bibitem{ArakawaDS}
T.~Arakawa,
\emph{Associated varieties of modules over Kac--Moody algebras and $C_2$-cofiniteness of $W$-algebras},
Int. Math. Res. Not. IMRN \textbf{2015} (2015), no.~22, 11605--11666.

\bibitem{ArakawaAdmissible}
T.~Arakawa,
\emph{Rationality of admissible affine vertex algebras in the category $\mathcal O$},
Duke Math. J. \textbf{165} (2016), no.~1, 67--93.

\bibitem{ADFLM}
T.~Arakawa, X.~Dai, J.~Fasquel, B.~Li and A.~Moreau,
\emph{On some simple orbifold affine VOAs at non-admissible level arising from rank one 4d SCFTs},
Commun. Math. Phys. (2025) 406:30.



\bibitem{ArakawaVanEkerenMoreau}
T.~Arakawa, J.~van Ekeren and A.~Moreau,
\emph{Singularities of nilpotent Slodowy slices and collapsing
levels of $W$-algebras},
Forum Math. Sigma \textbf{12} (2024), e95.



\bibitem{ArakawaKawasetsu}
T.~Arakawa and K.~Kawasetsu,
\emph{Quasi-lisse vertex algebras and modular linear differential equations},
in: Lie Groups, Geometry, and Representation Theory,
Progr. Math. \textbf{326}, Birkh\"auser/Springer (2018), 41--57.

\bibitem{ArakawaMoreau}
T.~Arakawa and A.~Moreau,
\emph{Joseph ideals and lisse minimal $W$-algebras},
J. Inst. Math. Jussieu \textbf{17} (2018), 397--417.

\bibitem{ArakawaMoreauSheets}
T.~Arakawa and A.~Moreau,
\emph{Sheets and associated varieties of affine vertex algebras},
Adv. Math. \textbf{320} (2017), 157--209;
corrigendum, Adv. Math. \textbf{372} (2020), 107302.

\bibitem{AxtellLee}
J.~D.~Axtell and K.-H.~Lee,
\emph{Vertex operator algebras associated to type $G$ affine Lie algebras},
J. Algebra \textbf{337} (2011), no.~1, 195--223.

\bibitem{BeemEtAl}
C.~Beem, M.~Lemos, P.~Liendo, W.~Peelaers, L.~Rastelli and B.~C.~van Rees,
\emph{Infinite chiral symmetry in four dimensions},
Comm. Math. Phys. \textbf{336} (2015), no.~3, 1359--1433.

\bibitem{BeemRastelli}
C.~Beem and L.~Rastelli,
\emph{Vertex operator algebras, Higgs branches, and modular differential equations},
J. High Energy Phys. \textbf{08} (2018), 114.

\bibitem{CollingwoodMcGovern}
D.~H.~Collingwood and W.~M.~McGovern,
\emph{Nilpotent Orbits in Semisimple Lie Algebras},
Van Nostrand Reinhold, New York (1993).

\bibitem{Cvitanovic:1980ed}
P.~Cvitanovi\' c,
\emph{Negative Dimensions and the Emergence of $E_7$ Symmetry in Supergravity},
Nucl. Phys. B \textbf{188}, 373-396 (1981).

\bibitem{Deligne}
P.~Deligne.
\emph{La s\'{e}rie exceptionnelle de groupes de {L}ie}, 
C. R. Acad. Sci. Paris S\'{e}r. I Math., 322\penalty0 (4):\penalty0 321--326, 1996.

\bibitem{Duque:2022tub}
D.~J.~Duque and A.~K.~Kashani-Poor,
\emph{Affine characters at negative level and elliptic genera of non-critical strings},
JHEP \textbf{07}, 208 (2023).


\bibitem{FrenkelZhu}
I.~B.~Frenkel and Y.~Zhu,
\emph{Vertex operator algebras associated to representations of affine and Virasoro algebras},
Duke Math. J. \textbf{66} (1992), no.~1, 123--168.

\bibitem{FJLS23}
 B. Fu, D. Juteau, P. Levy
                   and E. Sommers,
\emph{Local geometry of special pieces of nilpotent orbits},
(2023).

\bibitem{GaberdielKeller}
M.~R.~Gaberdiel and C.~A.~Keller,
\emph{Modular differential equations and null vectors},
J. High Energy Phys. \textbf{09} (2008), 079.

\bibitem{GorelikKac}
M.~Gorelik and V.~Kac,
\emph{On simplicity of vacuum modules},
Adv. Math. \textbf{211} (2007), no.~2, 621--677.

\bibitem{KacRoanWakimoto}
V.~G.~Kac, S.-S.~Roan and M.~Wakimoto,
\emph{Quantum reduction for affine superalgebras},
Comm. Math. Phys. \textbf{241} (2003), 307--342.

\bibitem{LiAbelianizing}
H.~Li,
\emph{Abelianizing vertex algebras},
Comm. Math. Phys. \textbf{259} (2005), no.~2, 391--411.

\bibitem{Li94}
H.~Li,
\textit{Symmetric invariant bilinear forms on vertex operator algebras},
J. Pure Appl. Algebra \textbf{96} (1994), no.~3, 279--297.

\bibitem{LiXieYan}
B.~Li, D.~Xie and W.~Yan,
\emph{On low rank 4d $\mathcal N=2$ SCFTs},
J. High Energy Phys. \textbf{05} (2023), 132.

\bibitem{PerseD}
O.~Per\v se,
\emph{A note on representations of some affine vertex algebras of type $D$},
Glas. Mat. Ser. III \textbf{48(68)} (2013), no.~1, 81--90.

\bibitem{Perse}
O.~Per\v se,
\emph{Vertex operator algebra analogue of embedding of $B_4$ into $F_4$},
J. Pure Appl. Algebra \textbf{211} (2007), 702--720.

\bibitem{Zhu}
Y.~Zhu,
\emph{Modular invariance of characters of vertex operator algebras},
J. Amer. Math. Soc. \textbf{9} (1996), no.~1, 237--302.



\end{thebibliography}
\end{document}